\definecolor{purple}{rgb}{0.13,0.54,0.13}
\definecolor{marron}{rgb}{0.64,0.16,0.16}
\definecolor{forestgreen}{rgb}{0.13,0.50,0.13}
\pgfplotsset{compat=newest}
\newcommand{\mb}[1]{\mathbb{#1}}
\newcommand{\mc}[1]{\mathcal{#1}}
\newcommand{\mr}[1]{\mathrm{#1}}
\newcommand{\mbf}[1]{\mathbf{#1}}
\newcommand{\im}{\mathrm{Im}}
\theoremstyle{definition} 
\newtheoremstyle%
   {remarks}
   {1ex}
   {1ex}
   {\itshape}
   {-25pt}
   {}   
   {.}
   {\newline}
   {}
\theoremstyle{remark}
\newtheorem{rem}{\textsf{Remark}}
\theoremstyle{remark}
\theoremstyle{plain}
\newtheorem{theo}{\textsc{Theorem}}[section]
\newtheorem{Lemme}{\textsf{Lemma}}[section]
\newtheorem{corol}{\textsf{Corollary}}[section]
\newtheorem{prop}{\textsc{Proposition}}[section]
\numberwithin{equation}{section}
\author{  Farid AMMAR KHODJA\footnotemark[1] \and Franz CHOULY\footnotemark[1]  
\and Michel DUPREZ\footnotemark[1] \footnotemark[2]}
\title{Partial null controllability of parabolic linear systems}
\begin{document}
\footnotetext[1]{
Laboratoire de Mathématiques de Besançon UMR CNRS 6623, Université de Franche-Comté,
16 route de Gray, 25030 Besançon Cedex, France}
\footnotetext[2]{Corresponding author : \textit{Email}: michel.duprez@univ-fcomte.fr} 
\maketitle
\renewcommand{\abstractname}{Abstract}

\begin{abstract}
This paper is devoted  to the  partial 
 null controllability issue of parabolic linear systems with $n$ equations. 
 Given a bounded  domain $\Omega$ in $\mb{R}^N$ ($N\in \mb{N}^*$), 
 we study the effect of $m$ localized controls in 
 a nonempty open subset $\omega$ only controlling  $p$ components of the solution ($p,m\leqslant n$). 
 The first main result of this paper is a necessary and sufficient condition 
 when the coupling and control matrices are  constant. 
 The second result provides, in a first step, a sufficient condition of partial null controllability 
 when the matrices only depend on time.  
In a second step, through an example of partially controlled $2\times2$ parabolic system, 
we will provide positive and negative results on partial null controllability 
when the coefficients are space dependent.

\end{abstract}


\section{Introduction and main results}

\hspace*{4mm} Let $\Omega$ be a bounded domain in $\mb{R}^N$ ($N\in \mb{N}^*$) 
with a $\mc{C}^2$-class boundary $\partial \Omega$, 
  $\omega$ be a nonempty open subset of $\Omega$ and $T>0$.  
Let $p$, $m$, $n\in \mb{N}^*$ such that $p,m\leqslant n$. 
We consider in this paper the following  system of $n$ parabolic linear equations
\begin{equation}\label{partiel:syst para lin equa contr intro}
\begin{array}{l}
  \left\{\begin{array}{ll}
   \partial_t y=\Delta y+Ay+B\mathds{1}_{\omega}u&\mr{~in~}Q_T:=\Omega\times (0,T),\\
        y=0&\mr{~on}~\Sigma_T:=\partial \Omega\times (0,T),\\
	y(0)=y_0 &~\mr{in}~\Omega,
        \end{array}
\right.
\end{array}
\end{equation}
where $y_0\in L^2(\Omega)^n$ is the initial data, $u\in L^2(Q_T)^m$ is the control 
and 
 \begin{equation*}
\begin{array}{l}
 A\in L^{\infty}(Q_T;\mc{L}(\mb{R}^n))
 ~\mr{and }~ 
 B\in L^{\infty}(Q_T;\mc{L}(\mb{R}^m,\mb{R}^n)).
\end{array}
 \end{equation*}

In many fields such as chemistry, physics or biology it appeared relevant 
to study the controllability of such a system  
(see \cite{ammar2011recent}). 
For example, in \cite{chakrabarty2009distributed}, the authors study a system of three semilinear heat equations 
which is a model coming from a mathematical description 
of the growth of brain tumors. 
The unknowns  are 
the drug concentration, the density of tumors cells and the density of wealthy cells 
and the aim is  to control only  two  of them with one control. 
This practical issue motivates the introduction of the partial null controllability.



For an initial condition $y(0)=y_0\in L^2(\Omega)^n$ and a control 
$u\in L^2(Q_T)^m$, 
it is well-known that System (\ref{partiel:syst para lin equa contr intro}) admits a unique solution 
in 
$W(0,T)^n$, where  
\begin{equation*}
W(0,T):=\{y\in L^2(0,T;H^1_0(\Omega )),
\partial_ty\in L^2(0,T;H^{-1}(\Omega ))\},
\end{equation*}
with $H^{-1}(\Omega):=H^1_0(\Omega)'$ 
and the following estimate holds (see \cite{lions1968problemes})
\begin{equation}\label{partiel:estim sol int intro}
 \|y\|_{L^2(0,T;H_0^1(\Omega)^n)}
+\|y\|_{\mc{C}^0([0,T];L^2(\Omega)^n)}
\leqslant C(\|y_0\|_{L^2(\Omega)^n}
+\|u\|_{L^2(Q_T)^m}),
\end{equation}
where $C$ does not depend on time.
We denote by $y(\cdot ;y_0,u)$ the solution to System (\ref{partiel:syst para lin equa contr intro}) 
determined by the couple $(y_0,u)$.

Let us consider $\Pi_p$ the \textit{projection matrix} of $\mc{L}(\mb{R}^n)$ given by $\Pi_p:=(I_p~~ 0_{p,n-p})$ 
($I_p$ is the identity matrix of $\mc{L}(\mb{R}^p)$ and $0_{p,n-p}$ the null matrix of $\mc{L}(\mb{R}^{n-p},\mb{R}^p)$), 
that is,  
\begin{equation*}\begin{array}{cccc}
 \Pi_p:&\mb{R}^n&\rightarrow &\mb{R}^p, \\
&(y_1,...,y_n)&\mapsto&(y_1,...,y_p).
\end{array} \end{equation*}
System  (\ref{partiel:syst para lin equa contr intro}) is said to be
\begin{itemize}
\item 
$\mbf{\Pi_p}$-\textbf{approximately controllable} 
on the time interval $(0,T)$,  if 
for all real number $\varepsilon>0$ and $y_0,~y_T\in L^2(\Omega)^n$ 
there exists a control $u\in L^2(Q_T)^m$ 
such that 
\begin{equation*}\begin{array}{c}
\|\Pi_py(T;y_0,u)-\Pi_py_T\|_{L^2(\Omega)^p}
\leqslant \varepsilon.
\end{array}\end{equation*}
\item 
$\mbf{\Pi_p}$-\textbf{null controllable} 
on the time interval $(0,T)$,  if 
for all initial condition $y_0\in L^2(\Omega)^n$, 
there exists a control  $u\in L^2(Q_T)^m$
such that 
\begin{equation*}\begin{array}{c}
\Pi_p y(T;y_0,u)\equiv0 \mr{~in~} \Omega.
\end{array}\end{equation*}
 \end{itemize}

%


Before stating our main results, let us recall 
 the few known results about the (full) null controllability
 of System (\ref{partiel:syst para lin equa contr intro}). 
The first of them is about 
cascade systems (see \cite{gonzalez2010controllability}). 
The authors prove the null controllability 
of System (\ref{partiel:syst para lin equa contr intro}) with the control matrix 
$B:=e_1$ (the first vector of the canonical basis of $\mb{R}^n$) 
and a coupling matrix $A$ of the form
\begin{equation}\label{partiel:form cascade}
 A:=\left(
 \begin{array}{ccccc}
  \alpha_{1,1}&\alpha_{1,2}&\alpha_{1,3}&\cdots&\alpha_{1,n}\\
  \alpha_{2,1}&\alpha_{2,2}&\alpha_{2,3}&\cdots&\alpha_{2,n}\\
 0&\alpha_{3,2}&\alpha_{3,3}&\cdots&\alpha_{3,n}\\
 \vdots&\vdots&\ddots&\ddots&\vdots\\
 0&0&\cdots &\alpha_{n,n-1}&\alpha_{n,n}
 \end{array}
\right),
\end{equation}
where the coefficients $\alpha_{i,j}$ are elements of $L^{\infty}(Q_T)$ 
for all $i,j\in\{1,...,n\}$ 
and satisfy for a  positive constant $C$ and a nonempty open set $\omega_0$ of $\omega$
\begin{equation*}
\alpha_{i+1,i}\geqslant C \mr{~in~}\omega_0
\mr{~~~or~~~}-\alpha_{i+1,i}\geqslant C \mr{~in~}\omega_0
\mr{~~~for~all~}i\in \{1,...,n-1\}.
\end{equation*}
A similar result on parabolic systems with cascade coupling matrices can be found in \cite{alabau2013}. 

The null controllability of parabolic $3\times3$ linear systems with  space/time 
dependent coefficients and non cascade structure
is  studied in  \cite{benabdallah2014} and \cite{mauffrey2013null} 
(see also  \cite{gonzalez2010controllability}).

If $A\in \mc{L}(\mb{R}^n)$ and $B\in \mc{L}(\mb{R}^m,\mb{R}^n)$ (the constant case), 
it has been proved in \cite{ammar2009kalman} that System (\ref{partiel:syst para lin equa contr intro}) is 
null controllable on the time interval $(0,T)$ 
if and only if the following condition holds
  \begin{equation}\label{partiel:kalman considion constant}
   \mr{rank}~[A|B]=n,
  \end{equation}
where $[A|B]$, the so-called \textit{Kalman matrix}, is defined as
\begin{equation}\label{partiel:def matrice kalman}
 [A|B]:=(B|AB|...|A^{n-1}B).
\end{equation}

 For time dependent coupling and control matrices, 
we need  some additional regularity. More precisely,  
we need to suppose that $A\in \mc{C}^{n-1}([0,T];\mc{L}(\mb{R}^n))$ 
and $ B\in\mc{C}^{n}([0,T];\mc{L}(\mb{R}^m;\mb{R}^n))$. 
In this case, the associated Kalman matrix is defined as follows. 
Let us define
\begin{equation*}
 \left\{\begin{array}{l}
       B_0(t):=B(t),\\ 
       B_i(t):=A(t)B_{i-1}(t)-\partial_tB_{i-1}(t)\mr{~~~for~all}~ i\in\{1,..., n-1\}
        \end{array}
\right.
\end{equation*}
and denote by $[A | B](\cdot) 
\in \mc{C}^1 ([0, T ]; \mc{L}(\mb{R}^{nm} ; \mb{R}^n ))$ 
the matrix function given by 
\begin{equation}\label{partiel:def B_i}
 [A | B](\cdot) :=(B_0(\cdot)|B_1(\cdot)|...|B_{n-1}(\cdot)).
\end{equation}
In \cite{ammar2009generalization} the authors prove first that,  
if there exists $t_0\in [0,T]$ 
such that
  \begin{equation}\label{partiel:condition theoreme temps null1}
   \mr{rank}~[A | B](t_0)=n,
  \end{equation}
  then System (\ref{partiel:syst para lin equa contr intro}) is  
 null controllable on the time interval $(0,T)$. 
Secondly that System (\ref{partiel:syst para lin equa contr intro}) 
is  null controllable 
on every interval $(T_0,T_1)$ with
$ 0\leqslant T_0 < T_1\leqslant T$  if and only if there exists 
a dense subset $E$ of $(0, T )$ such that
\begin{equation}\label{partiel:condition theoreme temps null2}
\mr{rank}~ [A | B](t) = n  \mr{~for~ every}~t \in E.
\end{equation}

In the present paper, the controls are acting on several equations 
but on one subset $\omega$ of $\Omega$. 
Concerning the case where the control domains are not identical, 
we refer to  \cite{olive2012}.


Our first result is the following:
\begin{theo}\label{partiel:theorem contro coupl constant}
Assume that the coupling and control matrices are constant in space and time, i. e., 
$A\in \mc{L}(\mb{R}^n)$ and $B\in \mc{L}(\mb{R}^m,\mb{R}^n)$. 
The condition 
 \begin{equation}\label{partiel:cond cas constant une force}
   \mr{rank}~\Pi_p[A|B]=p
  \end{equation}
  is equivalent to the $\Pi_p$-null/approximate controllability on the time interval $(0,T)$ of 
System (\ref{partiel:syst para lin equa contr intro}).
\end{theo}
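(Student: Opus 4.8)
The plan is to reduce the $\Pi_p$-null controllability of System \eqref{partiel:syst para lin equa contr intro} to the classical (full) null controllability result of \cite{ammar2009kalman}, recalled in \eqref{partiel:kalman considion constant}, by an appropriate change of variables that decouples the "controllable part" of the system. First I would diagonalize the Laplacian: writing $y=\sum_k y_k(t)\phi_k(x)$ in the Hilbert basis $(\phi_k)$ of eigenfunctions of $-\Delta$ with Dirichlet boundary conditions (eigenvalues $\lambda_k$), the PDE system becomes a countable family of ODE systems $\dot y_k = (-\lambda_k I_n + A)y_k + B\mathds{1}_\omega u$-type objects; more precisely one works with the adjoint observability inequality. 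The standard equivalence here is between $\Pi_p$-null controllability and an observability estimate for the adjoint system $-\partial_t\varphi = \Delta\varphi + A^*\varphi$ with final data in $\mathrm{Im}(\Pi_p^*) = \mathbb{R}^p\times\{0\}$, namely $\|\varphi(0)\|^2 \leqslant C\int_0^T\!\!\int_\omega |B^*\varphi|^2$.

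Next I would identify the algebraic obstruction. The key linear-algebra fact is that condition \eqref{partiel:cond cas constant une force}, $\mathrm{rank}\,\Pi_p[A|B]=p$, is equivalent to saying that the smallest $A^*$-invariant subspace of $\mathbb{R}^n$ containing $\mathrm{Im}(\Pi_p^*)$... — more usefully, $\mathrm{rank}\,\Pi_p[A|B]=p$ means precisely that $\mathrm{Ker}[A|B]^* \cap \mathrm{Im}(\Pi_p^*)$ behaves correctly, i.e. that no nonzero vector $v\in\mathbb{R}^p\times\{0\}$ lies in $\bigcap_{j=0}^{n-1}\mathrm{Ker}(B^*(A^*)^j)$. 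Equivalently, the Kalman-type subspace $\mathcal{N} := \bigcap_{j\geqslant 0}\mathrm{Ker}(B^*(A^*)^j)$ (the unobservable subspace of the pair $(A^*,B^*)$) satisfies $\Pi_p|_{\mathcal{N}} = 0$, i.e. $\mathcal{N}\subseteq\{0\}^p\times\mathbb{R}^{n-p}$. I would prove this equivalence by a direct rank computation, using that $\mathcal{N}^\perp = \mathrm{Im}[A|B]$ (the reachable subspace of $(A,B)$) and that $\Pi_p$ restricted to $\mathcal{N}$ vanishes iff $\mathrm{Im}(\Pi_p^*)\subseteq \mathcal{N}^\perp = \mathrm{Im}[A|B]$, which upon taking ranks is exactly \eqref{partiel:cond cas constant une force}.

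With this in hand, the argument splits cleanly. For sufficiency: decompose $\mathbb{R}^n = \mathcal{R}\oplus\mathcal{R}^\perp$ where $\mathcal{R}=\mathrm{Im}[A|B]$ is $A$-invariant and contains $\mathrm{Im}(B)$; the projection of the system onto $\mathcal{R}$ is a fully controllable parabolic system in the sense of \eqref{partiel:kalman considion constant}, so by \cite{ammar2009kalman} one can drive its $\mathcal{R}$-component to zero, and since $\mathrm{Im}(\Pi_p^*)\subseteq\mathcal{R}$ this forces $\Pi_p y(T)\equiv 0$; the uncontrolled component in $\mathcal{R}^\perp$ is irrelevant since $\Pi_p$ annihilates it. For necessity: if \eqref{partiel:cond cas constant une force} fails, pick $0\neq v\in\mathcal{N}$ with $\Pi_p v\neq 0$; then for a single Fourier mode the scalar-times-$v$ solution of the adjoint equation is not observed through $B^*\mathds{1}_\omega$ at all, which breaks the observability inequality (and also kills approximate controllability), so $\Pi_p$-null controllability cannot hold. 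For the $\Pi_p$-approximate controllability equivalence I would use the standard duality characterization (density of the range is equivalent to injectivity of the adjoint observation operator restricted to the relevant subspace) and note it leads to the same algebraic condition. The main obstacle I anticipate is bookkeeping the change of basis carefully enough to apply \cite{ammar2009kalman} as a black box while keeping track of how $\Pi_p$ interacts with the decomposition $\mathcal{R}\oplus\mathcal{R}^\perp$ — in particular checking that the projected control matrix and projected coupling matrix still satisfy the Kalman rank condition on $\mathcal{R}$, which is essentially automatic but deserves a clean statement.
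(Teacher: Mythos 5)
Your central linear-algebra claim is incorrect, and both halves of the argument lean on it. You assert that $\mathrm{rank}\,\Pi_p[A|B]=p$ is equivalent to $\mathcal N:=\bigcap_{j\geqslant0}\mathrm{Ker}\bigl(B^*(A^*)^j\bigr)\subseteq\{0\}^p\times\mathbb{R}^{n-p}$, i.e.\ to $\mathrm{Im}\,\Pi_p^*\subseteq\mathrm{Im}[A|B]$. The correct equivalence is weaker: since $\mathrm{rank}\,\Pi_p[A|B]=\mathrm{rank}\bigl([A|B]^*\Pi_p^*\bigr)=p-\dim\bigl(\mathrm{Im}\,\Pi_p^*\cap\mathrm{Ker}[A|B]^*\bigr)$ and $\mathrm{Ker}[A|B]^*=\mathcal N$, condition \eqref{partiel:cond cas constant une force} says only that $\mathrm{Im}\,\Pi_p^*\cap\mathcal N=\{0\}$, not that $\mathrm{Im}\,\Pi_p^*\subseteq\mathcal N^{\perp}$. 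Concretely, take $n=2$, $p=m=1$, $A=0$, $B=(1,1)^{\mathrm t}$: then $\mathrm{rank}\,\Pi_1[A|B]=1$ and the system is indeed $\Pi_1$-null controllable (its first equation is a controlled heat equation), yet $\mathrm{Im}\,\Pi_1^*=\mathrm{span}\{(1,0)\}\not\subseteq\mathrm{Im}[A|B]=\mathrm{span}\{(1,1)\}$ and $\Pi_1$ does not annihilate $\mathcal N=\mathrm{span}\{(1,-1)\}$. Hence your sufficiency argument, which needs ``$\Pi_p$ annihilates $\mathcal R^{\perp}$'', only proves the theorem under a strictly stronger hypothesis and misses admissible cases. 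The necessity argument has the mirror defect: you pick $0\neq v\in\mathcal N$ with $\Pi_pv\neq0$; such a $v$ exists in the example above even though the system is controllable, and the adjoint datum $v\,w_k$ is not admissible in the duality of Proposition \ref{partiel:property equivalence contr obs} unless $v\in\mathrm{Im}\,\Pi_p^*$. The right choice is $0\neq v\in\mathcal N\cap(\mathbb{R}^p\times\{0\})$, which exists exactly when \eqref{partiel:cond cas constant une force} fails; with that repair your mode-by-mode computation ($B^*e^{(T-t)A^*}v\equiv0$ while $\varphi(0)\neq0$) does give a clean necessity proof, in fact shorter than the paper's (which uses a change of variables plus backward uniqueness).

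Even for sufficiency, and even under the corrected hypothesis, invoking \cite{ammar2009kalman} as a black box on $\mathcal R=\mathrm{Im}[A|B]$ does not close the argument: $\mathcal R$ is $A$-invariant but its complement is not, so in a Kalman basis the system is only block-triangular, and the controllable block is driven by a source term $A_{12}y_2$ produced by the free, uncontrolled block. Null controllability of the pair $(A_{11},B_1)$ does not survive such a source (a free heat-type right-hand side does not have the $e^{-C/(T-t)}$ decay required in \eqref{partiel:cond f heat sec mem}). This is precisely the difficulty the paper removes with its time-dependent change of variables $P(t)=(B|AB|\dots|A^{s-1}B|P_{s+1}(t)|\dots|P_n(t))$ with $\partial_tP_l=AP_l$, $P_l(T)=e_l$, which makes the transformed coupling matrix block-diagonal with a cascade block, so that Theorem \ref{partiel:theo cascade} applies and $\Pi_py(T)=0$ is recovered from the triangular structure of $P(T)$; there the rank hypothesis is used not as an inclusion of subspaces but through the invertibility of an $s\times s$ submatrix of $\Pi$-projected columns of $(B|\dots|A^{s-1}B)$ (after a row permutation when $p<s$). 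So the gap is not mere bookkeeping of the decomposition $\mathcal R\oplus\mathcal R^{\perp}$: you need either the paper's decoupling change of variables or an independent argument handling the source term from the uncontrolled part.
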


The Condition (\ref{partiel:cond cas constant une force}) 
for $\Pi_p$-null controllability reduces to  Condition 
(\ref{partiel:kalman considion constant}) whenever $p=n$. A second result concerns  the non-autonomous case:

\begin{theo}\label{partiel:theo temps}
Assume that $A\in \mc{C}^{n-1}([0,T];\mc{L}(\mb{R}^n))$ 
and $ B\in\mc{C}^{n}([0,T];\mc{L}(\mb{R}^m;\mb{R}^n))$. 
  If 
  \begin{equation}\label{partiel:condition theoreme temps}
   \mr{rank}~\Pi_p[A|B](T)=p,
  \end{equation}
  then System (\ref{partiel:syst para lin equa contr intro}) is  
  $\Pi_p$-null/approximately controllable on the time interval $(0,T)$.
\end{theo}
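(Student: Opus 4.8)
The plan is to reduce the $\Pi_p$-null controllability of the PDE system \eqref{partiel:syst para lin equa contr intro} to a parabolic observability inequality for the adjoint system, exactly as in the full-controllability theory, but keeping track only of the first $p$ components at the final time. First I would write down the adjoint system: for $\varphi_T\in L^2(\Omega)^n$ let $\varphi$ solve the backward problem $-\partial_t\varphi=\Delta\varphi+A^*\varphi$ in $Q_T$, $\varphi=0$ on $\Sigma_T$, $\varphi(T)=\Pi_p^*\varphi_T^p$ where $\varphi_T^p\in L^2(\Omega)^p$; i.e. the terminal data is supported in the first $p$ coordinates. By the standard duality argument (Hilbert Uniqueness Method), $\Pi_p$-null controllability on $(0,T)$ is equivalent to the observability estimate
\begin{equation*}
\|\varphi(0)\|_{L^2(\Omega)^n}^2\leqslant C\int_0^T\!\!\int_\omega |B^*\varphi|^2\,dx\,dt
\end{equation*}
for all such $\varphi$, where $C$ is independent of $\varphi_T^p$. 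So the whole problem is to prove this observability inequality under hypothesis \eqref{partiel:condition theoreme temps}.

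The key idea is to use the Kalman-type algebraic trick of \cite{ammar2009generalization} to convert the vector system into a scalar situation near the final time. I would introduce the differential operator machinery behind the $B_i(t)$: set $\mathcal{L}\psi:=A^*\psi+\partial_t\psi$ acting on the adjoint; then $B^*\varphi$, $(\mathcal{L})(B^*\varphi)$, \dots generate precisely the rows of $[A|B](t)^*$ applied to $\varphi$. Since $\mathrm{rank}\,\Pi_p[A|B](T)=p$ and this rank is lower-semicontinuous, the rank stays $\geqslant p$ on a whole interval $(T-\tau,T)$; after a (time-dependent, smooth, invertible) change of variables in $\mathbb{R}^n$ one can arrange that on $(T-\tau,T)$ the first $p$ components of $\varphi$ are expressed as linear combinations with $\mathcal{C}^1$ coefficients of $B^*\varphi$ and its time-derivatives up to order $n-1$. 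Then a Carleman estimate for the scalar heat equation applied to the components of $B^*\varphi$ — with the extra derivative-in-time terms absorbed via the standard "bootstrap"/energy estimates for parabolic equations (this is where the $\mathcal{C}^{n-1}$, $\mathcal{C}^n$ regularity of $A,B$ is used, so the $B_i$ are $\mathcal{C}^1$ and the Carleman weights can swallow lower-order terms) — yields a local observability of $\Pi_p\varphi$ on $\omega\times(T-\tau,T)$ in terms of $\int\!\!\int_\omega|B^*\varphi|^2$.

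From local observability of the $p$ controlled components near time $T$ one then has to propagate back to $t=0$ and to the full vector $\varphi(0)$. For the propagation in time I would use the dissipativity/analytic-in-time smoothing of the backward parabolic semigroup together with a Lebeau--Robbiano-type spectral inequality (or simply the global parabolic Carleman estimate with the same weight, which already gives the $L^2$ bound on $\varphi(0)$ once the observation of the controlled block is in hand). The delicate point, and the one I expect to be the main obstacle, is that the inequality must control $\|\varphi(0)\|$ in all $n$ components even though at time $T$ we only observe the first $p$: one recovers the other components because the adjoint dynamics mixes them and the rank condition at $T$ exactly guarantees (through the $B_i(T)$) that no nontrivial trajectory has its first $p$ components vanishing to infinite order — so uniqueness, and quantitatively the Carleman estimate, forces $\varphi\equiv 0$. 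Finally, $\Pi_p$-approximate controllability follows from $\Pi_p$-null controllability for linear systems by the usual density argument (null controllability of the translated target plus the free evolution), so no separate proof is needed. The honest gap to be careful about is the uniformity of the constant $\tau$ and of $C$ with respect to $\varphi_T^p$, which comes down to checking that all the algebraic reductions above use only $A,B$ and their derivatives, not $\varphi$.
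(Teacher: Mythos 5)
Your dual formulation is the right one: Proposition \ref{partiel:property equivalence contr obs} does reduce the theorem to the observability inequality \eqref{partiel:inegalite d observabilite lineraire2}, and your remark that it suffices to prove it near $t=T$ and then propagate to $t=0$ by a plain energy estimate is sound. The proof breaks down, however, at your central algebraic step. What the identity behind the $B_i$'s gives is that the backward heat operator applied to $B_i^*\varphi$ produces $B_{i+1}^*\varphi$, so from the observation of $B^*\varphi$ on $\omega$ you can at best estimate $[A|B](t)^*\varphi$, i.e.\ the projection of $\varphi$ onto the column space $X(t)$ of $[A|B](t)$. The hypothesis $\mr{rank}\,\Pi_p[A|B](T)=p$ is a condition on the first $p$ \emph{rows}: it says $\Pi_pX(T)=\mb{R}^p$, which does \emph{not} imply $\mr{span}(e_1,\dots,e_p)\subset X(T)$, and hence does not let you write $\varphi_1,\dots,\varphi_p$ as combinations of $B^*\varphi$ and its derivatives. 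A two-dimensional example: $n=2$, $p=1$, $B=(1,1)^{\mr{t}}$, $A=I_2$, so $X=\mr{span}\bigl((1,1)^{\mr{t}}\bigr)$ and the rank condition holds, yet $B^*\varphi$ and all its derivatives only give $\varphi_1+\varphi_2$, never $\varphi_1$ alone. Moreover, even a bound on $\Pi_p\varphi$ would not suffice: \eqref{partiel:inegalite d observabilite lineraire2} must control all $n$ components of $\varphi(0)$, since $y_0$ is arbitrary in $L^2(\Omega)^n$. Your proposed resolution of that ``main obstacle'' --- that the rank condition at $T$ forbids nontrivial trajectories whose observed part vanishes --- is not correct: when $\mr{rank}\,[A|B](T)=s<n$ there are plenty of nontrivial adjoint trajectories with $B^*\varphi\equiv0$ in $Q_T$. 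What saves the estimate is exclusively the constraint $\varphi(T)=\Pi_p^*\varphi_0$ (in the toy example above it forces $\varphi_2\equiv0$, and only then is $B^*\varphi=\varphi_1$); your argument states this constraint but never uses it.

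The missing mechanism is precisely what the paper constructs. Lemma \ref{partiel:lemme expression base temps} (with $T_1=T$) provides a $\mc{C}^1$ basis $b_0^{l_j}(t),\dots,b_{s_j-1}^{l_j}(t)$ of $X(t)$ up to the final time; it is completed into a change of variables $P(t)$ whose last $n-s$ columns solve $\partial_tP_l=AP_l$, $P_l(T)=e_l$ (see \eqref{partiel:def P temps p=s}--\eqref{partiel:system K_2 p=s temps}). In the new variable the system splits into a cascade block of size $s$, handled by Theorem \ref{partiel:theo cascade}, and a fully decoupled block, and the rank condition enters only through the invertibility of $P_{11}$ in \eqref{partiel:definition K(T) p=s temps}, which guarantees that annihilating the cascade block at time $T$ annihilates $\Pi_py(T)$ (case $p=s$; the case $p<s$ requires a further permutation, and the possible loss of invertibility of $P(t)$ away from $T$ is handled by acting only on $[T^*,T]$). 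Dually, this structure is exactly what makes the components of the adjoint that are invisible to the observation vanish identically for terminal data of the form $\Pi_p^*\varphi_0$. Without building such a $P(t)$ (or an equivalent splitting that exploits the constrained terminal data), the Carleman/observability route you sketch cannot close.
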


 In Theorems \ref{partiel:theorem contro coupl constant} and \ref{partiel:theo temps}, we control the $p$ first components of the solution $y$. 
 If we want to control some other components a permutation of lines leads to the same situation.

\begin{rem}
\begin{enumerate}
 \item When the components of the matrices $A$ and $B$ are analytic functions on the time interval $[0,T]$, 
 Condition \eqref{partiel:condition theoreme temps null1} is necessary for the  null controllability 
 of System (\ref{partiel:syst para lin equa contr intro}) (see Th. 1.3 in \cite{ammar2009generalization}). 
 Under the same assumption, the proof of this result can be adapted to show that the following condition
   \begin{equation*}
   \left\{\begin{array}{l}
  \mr{there~exists~}t_0\in [0,T]\mr{~such~that:~}\\
  \mr{rank}~\Pi_p[A | B](t_0)=p,           
          \end{array}
\right.
  \end{equation*}
 is necessary to the $\Pi_p$-null controllability of System (\ref{partiel:syst para lin equa contr intro}).
 \item As told before, under Condition (\ref{partiel:condition theoreme temps null1}), 
System (\ref{partiel:syst para lin equa contr intro}) is   null controllable. 
  But unlike the case where all the components are controlled, 
  the $\Pi_p$-null controllability at a time $t_0$ smaller than $T$ 
  does not imply this property on the time interval $(0,T)$. 
  This roughly explains Condition (\ref{partiel:condition theoreme temps}). 
 Furthermore this condition  can not be necessary under the assumptions of Theorem \ref{partiel:theo temps} 
  (for a counterexample we refer to \cite{ammar2009generalization}). 
  
\end{enumerate}
%

  \end{rem}

\begin{rem}
In the proofs of Theorems \ref{partiel:theorem contro coupl constant} and \ref{partiel:theo temps}, we will use 
a result of  
null controllability for cascade systems 
(see Section \ref{partiel:section useful}) proved in  \cite{ammar2009generalization,gonzalez2010controllability}  
where the authors consider 
a time-dependent second order elliptic operator $L(t)$ 
 given by
\begin{equation}\label{partiel:obs operator carleman}
 L(t)y(x,t)=-\displaystyle\sum_{i,j=1}^N
 \dfrac{\partial}{\partial x_i}\left(\alpha_{i,j}(x,t)\dfrac{\partial y}{\partial x_j}(x,t)\right)
 +\displaystyle\sum_{i=1}^Nb_i(x,t) \dfrac{\partial y}{\partial x_i}(x,t)
+c(x,t)y(x,t),
\end{equation}
with coefficients $\alpha_{i,j}$, $b_i$, $c$ satisfying
\begin{equation*}
 \left\{\begin{array}{l}
         \alpha_{i,j}\in W^{1}_{\infty}(Q_T),~b_i,c\in L^{\infty}(Q_T)~1\leqslant i,j\leqslant N,\\
         \alpha_{i,j}(x,t)=\alpha_{j,i}(x,t)~\forall (x,t)\in Q_T,~1\leqslant i,j\leqslant N
        \end{array}
 \right.
\end{equation*}
and the uniform elliptic condition: there exists $a_0>0$ such that
\begin{equation*}
 \sum\limits_{i,j=1}^N\alpha_{i,j}(x,t)\xi_i\xi_j\geqslant a_0|\xi|^2,~\forall (x,t)\in Q_T.
\end{equation*}
 Theorems \ref{partiel:theorem contro coupl constant} and \ref{partiel:theo temps} remain true 
if we replace 
$-\Delta$  by an operator $L(t)$ 
in System \eqref{partiel:syst para lin equa contr intro}.
\end{rem}

   Now the following question arises: what happens in the case of space and time dependent 
   coefficients  ? 
   As it will be shown in the following 
    example, the answer seems to be much more tricky. 
 Let us now consider the following parabolic system of two equations
 \begin{equation}\label{partiel:intro syst prim}
\begin{array}{l}
  \left\{\begin{array}{ll}
   \partial_t y=\Delta y+\alpha z+\mathds{1}_{\omega}u&\mr{~in~}Q_T,\\
    \partial_t z=\Delta z&\mr{~in~}Q_T,\\
        y=z=0&\mr{~on}~\Sigma_T,\\
	y(0)=y_0,~z(0)=z_0 &~\mr{in}~\Omega,
        \end{array}
\right.
\end{array}
\end{equation}
for given initial data $y_0,~z_0\in L^2(\Omega)$, a control $u\in L^2(Q_T)$ and 
where the coefficient $\alpha\in L^{\infty}(\Omega)$.


%

 \begin{theo}\label{partiel:theo example contr intro}
\begin{enumerate}[(1)]
\item Assume that  $\alpha\in C^1([0,T])$. 
Then System  (\ref{partiel:intro syst prim}) is $\Pi_1$-null controllable for any open set 
$\omega\subset \Omega\subset \mb{R}^N$ $(N\in\mb{N}^*)$, 
that is for all initial conditions $y_0,z_0\in L^2(\Omega)$, 
there exists a control $u\in L^2(Q_T)$ such that 
the solution $(y,z)$ to System (\ref{partiel:intro syst prim}) satisfies $y(T)\equiv 0$ in $\Omega$.
\item Let $\Omega:=(a,b)\subset\mb{R}$ $(a,b\in \mb{R})$, $\alpha\in L^{\infty}(\Omega)$, $(w_k)_{k\geqslant1}$ be the $L^2$-normalized eigenfunctions 
 of $-\Delta$ in $\Omega$  with Dirichlet boundary conditions and for all $k,l\in\mb{N}^*$, 
  \begin{equation*}\label{partiel:def alpha kj}
\alpha_{kl}:=\displaystyle\int_{\Omega}\alpha(x) w_k(x)w_l(x)\,dx  .
 \end{equation*}
If the function $\alpha$ satisfies
 \begin{equation}\label{partiel:second point t space}
 \left| \alpha_{kl}\right|
 \leqslant C_{1}e^{-C_{2}|k-l|}\mr{~for~ all }~k,l\in\mb{N}^*,
 \end{equation}
for two positive constants $C_{1}>0$ and  $C_{2}>b-a$, 
 then System (\ref{partiel:intro syst prim}) is $\Pi_1$-null controllable for any open set $\omega\subset \Omega$.
 \item Assume that   $\Omega:=(0,2\pi)$ and $\omega\subset (\pi,2\pi)$. 
Let us consider $\alpha\in L^{\infty}(0,2\pi)$ defined by
\begin{equation*}
\alpha(x):=\sum\limits_{j=1}^{\infty}\dfrac{1}{j^2}\cos(15jx)\mr{~for~all~}x\in(0,2\pi).
\end{equation*}
Then 
System (\ref{partiel:intro syst prim}) is not $\Pi_1$-null controllable. 
More precisely, there exists $k_1\in\{1,...,7\}$ 
 such that for the initial condition $(y_0,z_0)=(0,\sin(k_1x))$ and  any control 
 $u\in L^2(Q_T)$ the solution $y$ 
 to System  (\ref{partiel:intro syst prim}) is not identically equal to zero at time $T$.
%
%
\end{enumerate}
\end{theo}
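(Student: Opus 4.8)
This is an immediate application of Theorem~\ref{partiel:theo temps}. System~\eqref{partiel:intro syst prim} is of the form~\eqref{partiel:syst para lin equa contr intro} with $n=2$, constant control matrix $B=e_{1}$, and a coupling matrix $A(t)$ whose only nonzero entry is $A_{12}(t)=\alpha(t)$; for $\alpha\in\mathcal C^{1}([0,T])$ this gives $A\in\mathcal C^{1}([0,T];\mathcal L(\mathbb R^{2}))$ and $B\in\mathcal C^{2}([0,T];\mathcal L(\mathbb R,\mathbb R^{2}))$. Then $B_{0}=e_{1}$ and $B_{1}=AB_{0}-\partial_{t}B_{0}=0$, so $[A\,|\,B](t)=(e_{1}\,|\,0)$ and $\operatorname{rank}\,\Pi_{1}[A\,|\,B](T)=1=p$; Theorem~\ref{partiel:theo temps} yields the $\Pi_{1}$-null/approximate controllability for any open set $\omega$.

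\textbf{Part (2).} In one space dimension I would use the moments method in the eigenbasis $(w_{k})$ of $-\Delta$ on $(a,b)$, with eigenvalues $\lambda_{k}$. By linearity it suffices to reach $y(T)=0$ from data of the form $(0,z_{0})$, the case $(y_{0},0)$ being classical heat null controllability. Since $z(t)=\sum_{l}z_{0,l}e^{-\lambda_{l}t}w_{l}$, projecting $y(T)=0$ onto $w_{k}$ shows that a control $u\in L^{2}(Q_{T})$ works iff, for every $k\ge1$, $\int_{0}^{T}\big(\int_{\omega}u(s,\cdot)w_{k}\big)e^{-\lambda_{k}(T-s)}\,ds=d_{k}$, where $d_{k}=-\sum_{l}\alpha_{kl}z_{0,l}\,\nu_{kl}$ and $\nu_{kl}=\int_{0}^{T}e^{-\lambda_{k}(T-s)-\lambda_{l}s}\,ds$. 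I would then set $u(s,x)=\sum_{k}d_{k}\,q_{k}(T-s)\,\psi_{k}(x)$, with $(q_{k})$ biorthogonal to $(e^{-\lambda_{k}\cdot})$ in $L^{2}(0,T)$ — which exists with controlled norms in dimension one because $\sum_{k}\lambda_{k}^{-1}<\infty$ and the spectral gap is bounded below — and $(\psi_{k})$ biorthogonal to $(w_{k}|_{\omega})$ in $L^{2}(\omega)$. Everything boils down to the convergence in $L^{2}(Q_{T})$ of this series, i.e. to $\sum_{k}|d_{k}|^{2}\,\|q_{k}\|_{L^{2}(0,T)}^{2}\,\|\psi_{k}\|_{L^{2}(\omega)}^{2}<\infty$. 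Bounding $\nu_{kl}$ by $Te^{-\lambda_{l}T}$ when $l\le k$ and by $e^{-\lambda_{k}T}/(\lambda_{l}-\lambda_{k})$ when $l>k$, and using $\sum_{l}\lambda_{l}^{-2}<\infty$, one gets $|d_{k}|\lesssim\big(e^{-C_{2}k}+e^{-\lambda_{k}T}\big)\|z_{0}\|_{L^{2}(\Omega)}$; on the other hand $\|q_{k}\|$ and $\|\psi_{k}\|$ grow exponentially in $k$ with a combined rate governed by $b-a$, so the assumption $C_{2}>b-a$ is exactly what makes the series summable. The main obstacle is to make the previous sentence quantitative — to identify the sharp exponential rates in the two biorthogonal estimates — which is precisely what pins the threshold to $b-a$.

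\textbf{Part (3).} Since $z_{0}=\sin(k_{1}x)$ is an eigenfunction, $z(t,x)=e^{-k_{1}^{2}t}\sin(k_{1}x)$, and $y$ solves $\partial_{t}y-\Delta y=e^{-k_{1}^{2}t}\,\alpha(x)\sin(k_{1}x)+\mathds{1}_{\omega}u$ with $y(0)=0$; hence $y(T)=F+\int_{0}^{T}e^{(T-s)\Delta}\mathds{1}_{\omega}u(s)\,ds$, where $F:=\int_{0}^{T}e^{-k_{1}^{2}s}\,e^{(T-s)\Delta}\big(\alpha\sin(k_{1}\cdot)\big)\,ds$. Thus the datum $(0,\sin(k_{1}x))$ is $\Pi_{1}$-null controllable iff $F$ belongs to the reachable set $\mathcal{R}$ of the scalar heat equation on $(0,2\pi)$ with control in $\omega$ (from $0$, at time $T$), and the plan is to prove $F\notin\mathcal{R}$. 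I use two ingredients. First, a standard smoothing argument: every element of $\mathcal{R}$ is $C^{\infty}$ (in fact real-analytic) on $(0,2\pi)\setminus\overline{\omega}$, and since $\omega\subset(\pi,2\pi)$ this open set contains $(0,\pi)$. Second, an explicit computation of $F$: from $\alpha(x)\sin(k_{1}x)=\sum_{j\ge1}\frac{1}{2j^{2}}\big[\sin((15j+k_{1})x)-\sin((15j-k_{1})x)\big]$ and $\int_{0}^{T}e^{-k_{1}^{2}s-M^{2}(T-s)}\,ds=(e^{-k_{1}^{2}T}-e^{-M^{2}T})/(M^{2}-k_{1}^{2})$, one finds that $F$ equals $\frac{e^{-k_{1}^{2}T}}{225}\sin(k_{1}x)\,g(15x)$ plus an entire function (the $e^{-M^{2}T}$ contribution) plus a remainder whose Fourier coefficients decay one order faster in $j$, where $g(\theta):=\sum_{j\ge1}\frac{\cos(j\theta)}{j^{4}}$ is the $2\pi$-periodic function coinciding on $(0,2\pi)$ with a fixed quartic polynomial and therefore of class $C^{2}$ but not $C^{3}$ at the points of $2\pi\mathbb Z$. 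Consequently $x\mapsto g(15x)$ is not $C^{3}$ at $x=\tfrac{2\pi}{15}$, and as $0<\tfrac{2\pi k_{1}}{15}<\pi$ for $k_{1}\in\{1,\dots,7\}$ we have $\sin\!\big(\tfrac{2\pi k_{1}}{15}\big)\ne0$, so $F$ itself fails to be $C^{3}$ at the point $\tfrac{2\pi}{15}\in(0,\pi)\subset(0,2\pi)\setminus\overline{\omega}$; this contradicts the first ingredient, whence $F\notin\mathcal{R}$ and no control $u$ can achieve $y(T)\equiv0$. The restriction $k_{1}\in\{1,\dots,7\}$ (equivalently $2k_{1}<15$) is exactly what puts the singular point $\tfrac{2\pi}{15}$ of $g(15\cdot)$ strictly inside $(0,\pi)$ while keeping $\sin(k_{1}\cdot)$ from vanishing there; in fact $k_{1}=1$ already works, which is why ``there exists $k_{1}$'' suffices. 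The main obstacle — and the technical heart of the whole theorem — is this regularity dichotomy: proving the smoothing property of $\mathcal{R}$, and, conversely, isolating cleanly in $F$ the genuinely $C^{2}$-but-not-$C^{3}$ leading term from the smoother corrections and the entire part.
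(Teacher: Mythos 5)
Your part (1) is exactly the paper's treatment (the paper also just invokes Theorem \ref{partiel:theo temps} after computing $B_1=AB_0-\partial_t B_0=0$, so that $\mathrm{rank}\,\Pi_1[A|B](T)=1$), and nothing more is needed there.

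Part (2), however, contains a genuine gap: the spatial biorthogonal family you rely on does not exist. You want $(\psi_k)\subset L^2(\omega)$ with $\int_\omega \psi_j w_k\,dx=\delta_{jk}$ for all $j,k$, so as to decouple the moment equations with the ansatz $u(s,x)=\sum_k d_k q_k(T-s)\psi_k(x)$. But if $\omega$ is a strict subset of $(a,b)$ (the interesting case, e.g.\ a small interval, so that $(a,b)\setminus\overline\omega$ contains a nonempty open interval $I$), extend any candidate $\psi_j$ by zero to $(a,b)$: its coefficients in the orthonormal basis $(w_k)$ are $\delta_{jk}$, hence the extension equals $w_j$ in $L^2(a,b)$, which is impossible since it vanishes a.e.\ on $I$ while $w_j$ vanishes only at finitely many points. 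In other words, the restricted family $(w_k|_\omega)$ is overcomplete in $L^2(\omega)$ and admits no biorthogonal system, so your series for $u$ cannot even be set up; moreover, the heuristic that ``$\|q_k\|$ and $\|\psi_k\|$ grow with a combined rate governed by $b-a$'' is unjustified — any substitute for $(\psi_k)$ would add its own exponential cost on top of the Fattorini--Russell bound $\|q_k\|\leqslant C_{T,\varepsilon}e^{(b-a+\varepsilon)k}$, and the threshold $C_2>b-a$ would be lost. The paper's fix is precisely to avoid spatial biorthogonality: it takes a single profile $f\in L^2$ supported in $\omega$ with $\inf_k f_k k^3=\beta>0$ (Lemma \ref{partiel:lemme contr separe}) and seeks $u(x,t)=f(x)\gamma(t)$, so that only the time biorthogonal family $(q_k)$ enters; then $|M_k|\lesssim k^3(e^{-k^2T}+e^{-C_2 k})$ and $C_2>b-a$ exactly beats $e^{(b-a+\varepsilon)k}$. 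Your estimates on $d_k$ (equivalently the paper's \eqref{partiel:ine moment 1}--\eqref{partiel:preuve lemme cont}) are fine, but the construction of $u$ must be replaced by this (or an equivalent) device.

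Part (3) is a genuinely different route from the paper and, as far as I can check, it works: you write $y(T)=F+w(T)$ with $F=\int_0^T e^{-k_1^2 s}e^{(T-s)\Delta}(\alpha\sin(k_1\cdot))\,ds$, observe that any state reachable by controls supported in $\omega\subset(\pi,2\pi)$ is smooth near $x_0=2\pi/15$ by interior parabolic regularity up to the final time, and show that, modulo $C^3$ terms (the $e^{-M^2T}$ contributions and the $O(j^{-5})$ corrections), $F$ equals $\frac{e^{-k_1^2T}}{225}\sin(k_1x)\,g(15x)$ with $g(\theta)=\sum_j j^{-4}\cos(j\theta)$, which is $C^2$ but not $C^3$ at $2\pi\mathbb{Z}$; since $\sin(2\pi k_1/15)\neq0$ for $k_1\in\{1,\dots,7\}$, $F$ fails to be $C^3$ at $x_0\in(0,\pi)$, a contradiction. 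The paper instead proves a boundary non-controllability result on $(0,\pi)$ (Theorem \ref{partiel:theo contr exemple bord}), by building adjoint data concentrated on frequencies $15M+1,\dots,15M+7$ whose boundary observation decays like $M^{-(2m-5)/2}$ while the pairing with $z_0$ is bounded below by $M^{-4}$ thanks to the sparsity \eqref{partiel:calcul submat alpha}, and then transfers this to the distributed problem on $(0,2\pi)$ by restriction to $(0,\pi)$. Your argument is more self-contained and explains transparently where the obstruction lies (a fixed finite-regularity singularity of the source contribution away from $\omega$); the paper's duality argument is more robust (it does not require an exactly computable singular part and adapts to the boundary-control statement and to the generalizations \eqref{partiel:form alpha gene} it mentions). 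Do make explicit, if you write this up, the interior parabolic regularity statement up to $t=T$ for the weak solution $w$, and the fact that all frequencies $15j\pm k_1$ are distinct, both of which you use implicitly.
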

We will not prove item (1) in  Theorem \ref{partiel:theo example contr intro},  because it is a direct consequence of Theorem \ref{partiel:theo temps}.

\begin{rem}
Suppose that $\Omega:=(0,\pi)$. 
 Consider  $\alpha\in L^{\infty}(0,\pi)$ and the real sequence $(\alpha_p)_{p\in \mb{N}}$ such that  
 for all $x\in (0,\pi)$ 
\begin{equation*}
\alpha(x):=\sum\limits_{p=0}^{\infty}\alpha_p\cos(px). 
\end{equation*}
Concerning  item (2), we remark that  Condition (\ref{partiel:second point t space}) is equivalent to  the existence of 
two  constants $C_{1}>0,~C_{2}>\pi$ such that, for all $p\in\mb{N}$,
 \begin{equation*}
 | \alpha_{p}|\leqslant C_{1}e^{-C_{2}p}.
 \end{equation*}
As it will be shown, the proof of item (3) in Theorem \ref{partiel:theo example contr intro} 
can be adapted in order to get the same conclusion 
for any $\alpha\in H^k(0,2\pi)$ ($k\in \mb{N}^*$) defined by
 \begin{equation}\label{partiel:form alpha gene}
\alpha(x):=\sum\limits_{j=1}^{\infty}\dfrac{1}{j^{k+1}}\cos((2k+13)jx)
\mr{~for~all~}x\in(0,2\pi).
\end{equation}
These given functions $\alpha$ belong to $H^k(0,\pi)$ but not to $D((-\Delta)^{k/2})$. 
Indeed,  in the proof of the third item in Theorem  \ref{partiel:theo example contr intro}, 
we use the fact that the matrix $(\alpha_{kl})_{k,l\in \mb{N}^*}$ is sparse (see \eqref{partiel:calcul submat alpha}), 
what seems true only  for coupling terms $\alpha$ of the form \eqref{partiel:form alpha gene}. 
Thus $\alpha$ is not zero on the boundary.
\end{rem}

\begin{rem}
From Theorem \ref{partiel:theo example contr intro}, one can deduce some new results concerning the null controllability of 
the heat equation with a right-hand side. Consider the system
  \begin{equation}\label{partiel:intro heat second member}
\begin{array}{l}
  \left\{\begin{array}{ll}
   \partial_t y=\Delta y+f+\mathds{1}_{\omega}u&\mr{~in~}(0,\pi)\times(0,T),\\
        y(0)=y(\pi)=0&\mr{~on}~(0,T),\\
	y(0)=y_0 &~\mr{in}~(0,\pi),
        \end{array}
\right.
\end{array}
\end{equation}
where $y_0\in L^2(0,\pi)$ is the initial data and $f,u\in L^2(Q_T)$ are the right-hand side and the control, respectively.  
Using the Carleman inequality (see \cite{fursikov1996controllability}), one can prove that 
 System \eqref{partiel:intro heat second member} is null controllable when $f$ satisfies 
\begin{equation}\label{partiel:cond f heat sec mem}
e^{\frac{C}{T-t}}f\in L^2(Q_T), 
\end{equation}
 for a positive constant $C$. 
 For more general right-hand sides it was rather open. 
 The second and third points of Theorem \ref{partiel:theo example contr intro} provide some positive and negative null controllability results 
 for System \eqref{partiel:intro heat second member} with right-hand side $f$ which does not  fulfil Condition  \eqref{partiel:cond f heat sec mem}.
\end{rem}

\begin{rem} Consider the same system as System \eqref{partiel:intro syst prim} except that the control is now on the boundary, 
that is
\begin{equation}\label{partiel:intro syst bord}
\begin{array}{l}
  \left\{\begin{array}{ll}
   \partial_t y=\Delta y+\alpha z&\mr{~in~}(0,\pi)\times(0,T),\\
    \partial_t z=\Delta z&\mr{~in~}(0,\pi)\times(0,T),\\
        y(0,t)=v(t), ~y(\pi,t)=z(0,t)=z(\pi,t)=0&\mr{~on}~(0,T),\\
	y(x,0)=y_0(x),~z(x,0)=z_0(x) &~\mr{in}~(0,\pi),
        \end{array}
\right.
\end{array}
\end{equation}
where $y_0,~z_0\in H^{-1}(0,\pi)$. 
In Theorem \ref{partiel:theo contr exemple bord}, we provide an explicit  coupling function $\alpha$ for which the $\Pi_1$-null controllability of System \eqref{partiel:intro syst bord} does not hold. 
Moreover one can adapt the proof of the second point in Theorem \ref{partiel:theo example contr intro} to prove the $\Pi_1$-null controllability of System \eqref{partiel:intro syst bord} 
under Condition \eqref{partiel:second point t space}.
\end{rem}

If the coupling matrix depends on space, the notions of $\Pi_1$-null and approximate 
controllability are not 
necessarily equivalent. Indeed, 
according to the choice of the coupling function  $\alpha\in L^{\infty}(\Omega)$, 
System (\ref{partiel:intro syst prim}) can be $\Pi_1$-null controllable or not. 
But this system is $\Pi_1$-approximately controllable for all $\alpha\in L^{\infty}(\Omega)$:
 \begin{theo}
Let $\alpha\in L^{\infty}(Q_T)$.
Then System (\ref{partiel:intro syst prim}) is $\Pi_1$-approximately controllable 
for any open set $\omega\subset \Omega\subset \mb{R}^N$ $(N\in\mb{N}^*)$, 
that is for all $y_0,y_T,z_0\in L^2(\Omega)$ and all $\varepsilon>0$, 
there exists a control $u\in L^2(Q_T)$ such that 
the solution $(y,z)$ to System (\ref{partiel:intro syst prim}) satisfies 
$$\|y(T)-y_T\|_{L^2(\Omega)}\leqslant\varepsilon.$$
\end{theo}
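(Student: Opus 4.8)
The plan is to prove $\Pi_1$-approximate controllability of System \eqref{partiel:intro syst prim} by duality, i.e. by establishing a unique continuation property for the adjoint system. Recall that $\Pi_1$-approximate controllability is equivalent to the following assertion: if $(\varphi,\psi)$ solves the backward adjoint system
\begin{equation*}
\left\{\begin{array}{ll}
-\partial_t\varphi=\Delta\varphi & \mr{~in~}Q_T,\\
-\partial_t\psi=\Delta\psi+\alpha\varphi & \mr{~in~}Q_T,\\
\varphi=\psi=0 & \mr{~on~}\Sigma_T,\\
\varphi(T)=\varphi_T\in L^2(\Omega),~\psi(T)=0 & \mr{~in~}\Omega,
\end{array}\right.
\end{equation*}
and if $\varphi=0$ in $\omega\times(0,T)$, then necessarily $\varphi_T=0$. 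Note that only the terminal condition on the first component is free because we project onto the first component; the second adjoint equation receives a zero terminal datum. The first step is therefore to write down this adjoint system carefully and reduce the approximate controllability statement to the implication just stated, using the standard Fenchel--Rockafellar / Hahn--Banach duality argument (the solution map of \eqref{partiel:intro syst prim} restricted to controls, composed with $\Pi_1$, has dense range iff its adjoint is injective).

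The second step handles the equation for $\varphi$. Since $\varphi$ solves the pure heat equation $-\partial_t\varphi=\Delta\varphi$ with Dirichlet conditions and vanishes on the open set $\omega\times(0,T)$, the classical unique continuation property for the heat equation (via Carleman estimates, as in \cite{fursikov1996controllability}) forces $\varphi\equiv0$ in $Q_T$, hence in particular $\varphi_T=\varphi(T)=0$. This is exactly the desired conclusion, so the coupling term $\alpha\varphi$ and the $\psi$-equation play no role whatsoever in the argument: the crucial point is that $\varphi$, the component dual to the controlled component $y$, is itself governed by a scalar heat equation with observation on $\omega$, whose observability/unique continuation holds with no hypothesis on $\alpha$. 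I would state this cleanly, emphasizing that this is why $\Pi_1$-approximate controllability is robust in $\alpha$ while $\Pi_1$-null controllability is not (it would require quantitative control of how $\alpha$ transfers information between the two modes).

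The third, essentially bookkeeping, step is to reassemble the duality argument: from $\varphi_T=0$ for every adjoint solution with $\varphi=0$ on $\omega\times(0,T)$, one concludes that $\Pi_1$ composed with the reachable set from $y_0$ (together with the free evolution) is dense in $L^2(\Omega)$, which is the claimed statement $\|y(T)-y_T\|_{L^2(\Omega)}\leqslant\varepsilon$. One should also note that since System \eqref{partiel:intro syst prim} is linear and well-posed with the estimate \eqref{partiel:estim sol int intro}, all the functional-analytic prerequisites (closedness, continuity of the observation operator) are in place, and the case $\alpha\in L^\infty(Q_T)$ rather than $L^\infty(\Omega)$ is covered verbatim since $\alpha$ never enters the estimate.

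I do not expect a genuine obstacle here; the only subtlety is the correct formulation of the adjoint problem for the \emph{partial} (projected) controllability notion --- making sure the terminal datum is free only on $\varphi$ and zero on $\psi$, and that observation of $\varphi$ alone on $\omega$ is what duality produces --- after which the proof collapses to the standard scalar heat-equation unique continuation result. If I wanted the proof to be self-contained I would include a short paragraph recalling the Carleman inequality yielding that unique continuation, but it is legitimate to simply cite \cite{fursikov1996controllability}.
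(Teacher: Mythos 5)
Your proposal is correct and follows essentially the same route as the paper: reduce $\Pi_1$-approximate controllability by duality (Proposition \ref{partiel:property equivalence contr obs}, item 2) to a unique continuation property for the adjoint system \eqref{partiel:syst dual approx}, observe that the dual component $\phi$ observed on $\omega$ solves a pure heat equation independent of $\alpha$, and conclude $\phi\equiv 0$ (hence $\phi_0=0$, and $\psi\equiv 0$) by unique continuation. The only cosmetic difference is the reference invoked for unique continuation: the paper uses Mizohata's uniqueness theorem \cite{mizohata58}, while you invoke Carleman estimates as in \cite{fursikov1996controllability}; either suffices.
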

\noindent This result is a direct consequence of 
the unique continuation property and existence/unicity of solutions for a single heat equation. 
Indeed System  (\ref{partiel:intro syst prim}) is $\Pi_1$-approximately controllable  
(see Proposition \ref{partiel:property equivalence contr obs}) 
if and only if for all $\phi_0\in L^2(\Omega)$ the solution to the adjoint system 
  \begin{equation}\label{partiel:syst dual approx}
\begin{array}{l}
  \left\{\begin{array}{ll}
-\partial_t\phi=\Delta \phi&\mr{~in~}Q_T,\\
-\partial_t\psi=\Delta \psi+\alpha\phi&\mr{~in~}Q_T,\\
        \phi=\psi=0&\mr{~on}~\Sigma_T,\\
	\phi(T)=\phi_0,~\psi(T)=0 &~\mr{in}~\Omega
        \end{array}
\right.
\end{array}
\end{equation}
satisfies
\begin{equation*}
 \phi\equiv0\mr{~in~} \omega\times(0,T)~\Rightarrow  (\phi,\psi)\equiv0\mr{~in~}Q_T.
\end{equation*}
If we assume that, for an initial data $\phi_0\in L^2(\Omega)$, 
the solution to System \eqref{partiel:syst dual approx} 
satisfies $\phi\equiv0$ in $\omega\times(0,T) $, 
then using  Mizohata uniqueness Theorem in  
\cite{mizohata58}, $\phi\equiv0$ in $Q_T$ and consequently $\psi\equiv0$ in $Q_T$.
For another example of parabolic systems 
for which these notions are not equivalent we refer for instance to \cite{CherifMinimalTimeDisjoint}. 

\begin{rem}
The quantity $\alpha_{kl}$, which appears in the second item of Theorem \ref{partiel:theo example contr intro}, has already been considered 
in some controllability studies for parabolic systems. Let us define for all $k\in \mb{N}^*$
\begin{equation*}
\left\{\begin{array}{l}
 I_{1,k}(\alpha):=\displaystyle\int_0^{a}
 \textstyle\alpha(x)w_k(x)^2dx,\vspace*{3mm}\\
 I_k(\alpha):=\alpha_{kk}.
\end{array}\right.
\end{equation*}
In \cite{minimaltime}, the authors have proved that the system
\begin{equation}\label{partiel:intro syst cascade}
\begin{array}{l}
  \left\{\begin{array}{ll}
   \partial_t y=\Delta y+\alpha z&\mr{~in~}(0,\pi)\times(0,T),\\
    \partial_t z=\Delta z+\mathds{1}_{\omega}u&\mr{~in~}(0,\pi)\times(0,T),\\
        y(0,t)=y(\pi,t)=z(0,t)=z(\pi,t)=0&\mr{~on}~(0,T),\\
	y(x,0)=y_0(x),~z(x,0)=z_0(x) &~\mr{in}~(0,\pi),
        \end{array}
\right.
\end{array}
\end{equation}
is  approximately controllable if and only if 
\begin{equation*}
\begin{array}{c}
  |I_k(\alpha)|+|I_{1,k}(\alpha)|\neq0\mr{~for~all~}k\in\mb{N}^*.     
\end{array}
\end{equation*}
A similar result has been obtained for the boundary  approximate controllability in \cite{oliveapprox2014}. 
Consider now
\begin{equation*}
 T_0(\alpha):=\limsup\limits_{\substack{k \to \infty }} 
 \frac{-\log (\min\{\left| I_k \right|,\left| I_{1,k} \right|\}) }{k^2} .
\end{equation*}
It is also proved in \cite{minimaltime} that: 
If $T>T_0(\alpha)$, then System \eqref{partiel:intro syst cascade} is  null controllable at time $T$ 
and if $T<T_0(\alpha)$, then System \eqref{partiel:intro syst cascade} is not  null controllable at time $T$. 
As in the present paper, we observe a difference between the approximate and null controllability, in contrast with the scalar case (see \cite{ammar2011recent}).
\end{rem}

In this paper, the sections are organized as follows. 
We start with some preliminary results on the null controllability for the  cascade systems 
and on the dual concept   associated to the $\Pi_p$-null controllability. 
Theorem \ref{partiel:theorem contro coupl constant}
  is proved in a first step with one force 
  i.e. $B\in \mb{R}^n$ in Section \ref{partiel:section one force} 
  and in a second step with $m$ forces in Section \ref{partiel:section m forces}. 
  Section \ref{partiel:section temps} is devoted to proving  Theorem \ref{partiel:theo temps}. 
  We consider the situations of the second and  third items of  
  Theorem \ref{partiel:theo example contr intro} in Section \ref{partiel:example control} 
  and  \ref{partiel:section contre exemple} respectively.   
  This paper ends with some numerical illustrations of $\Pi_1$-null controllability 
  and non $\Pi_1$-null controllability of System \eqref{partiel:intro syst prim} in 
Section  \ref{partiel:section num}.

%
%
%
\section{Preliminaries}\label{partiel:section useful}

\hspace*{4mm} In this section, we recall a known result about cascade systems 
and provide a characterization of the $\Pi_p$-controllability 
through the corresponding adjoint system.

\subsection{Cascade systems}

\hspace*{4mm} Some theorems of this paper use the following result of null controllability for the 
following cascade system of $n$ equations controlled by $r$ distributed functions
\begin{equation}\label{partiel:syst cascade}
\begin{array}{l}
  \left\{\begin{array}{ll}
   \partial_t w=\Delta w+Cw+D\mathds{1}_{\omega}u&\mr{~in~}Q_T,\\
        w=0&\mr{~on}~\Sigma_T,\\
	w(0)=w_0 &~\mr{in}~\Omega,
        \end{array}
\right.
\end{array}
\end{equation}
where $w_0\in L^2(\Omega)^n$, $u=(u_1,...,u_r)\in L^2(Q_T)^r$, 
with $r\in \{ 1,...,n\}$, 
and the coupling and control matrices $C\in\mc{C}^0([0,T]; \mc{L}(\mb{R}^n))$ and 
$D\in\mc{L}(\mb{R}^r,\mb{R}^n)$  
are given by
\begin{equation}\label{partiel:matrice cascade}
 C(t):=\left(
 \begin{array}{cccc}
  C_{11}(t)&C_{12}(t)&\cdots&C_{1r}(t)\\
  0&C_{22}(t)&\cdots&C_{2r}(t)\\
 \vdots&\vdots&\ddots&\vdots\\
 0&0&\cdots &C_{rr}(t)
 \end{array}
\right)
\end{equation}
with
\begin{equation*}
 C_{ii}(t):=\left(
 \begin{array}{ccccc}
  \alpha_{11}^i(t)&\alpha^i_{12}(t)&\alpha^i_{13}(t)&\cdots&\alpha^i_{1,s_i}(t)\\
  1&\alpha^i_{22}(t)&\alpha^i_{23}(t)&\cdots&\alpha^i_{2,s_i}(t)\\
 0&1&\alpha^i_{33}(t)&\cdots&\alpha^i_{3,s_i}(t)\\
 \vdots&\vdots&\ddots&\ddots&\vdots\\
 0&0&\cdots &1&\alpha^i_{s_i,s_i}(t)
 \end{array}
\right),
\end{equation*}
$s_i\in \mb{N}$, $\sum_{i=1}^rs_i=n$ and  
 $D:=(e_{S_1}|...|e_{S_r})$ with $S_1=1$ and $S_i=1+\sum_{j=1}^{i-1}s_j$, 
$i\in \{ 2,...,r\}$ 
 ($e_j$ is the $j$-th element of the canonical basis of $\mb{R}^n$).

\begin{theo}\label{partiel:theo cascade}
System (\ref{partiel:syst cascade}) is null controllable on the time interval $(0,T)$, i.e. 
 for all $w_0\in L^2(\Omega)^n$ there exists $u\in L^2(\Omega)^r$ 
 such that the solution $w$ in $W(0,T)^n$ to System  (\ref{partiel:syst cascade}) 
 satisfies $w(T)\equiv0$ in $\Omega$.
\end{theo}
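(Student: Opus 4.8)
The plan is to pass to the dual problem. By the Hilbert Uniqueness Method, System \eqref{partiel:syst cascade} is null controllable on $(0,T)$ if and only if there is a constant $C>0$ such that every solution of the adjoint system
\begin{equation*}
\left\{\begin{array}{ll}
-\partial_t\varphi=\Delta\varphi+C^*\varphi & \mr{~in~}Q_T,\\
\varphi=0 & \mr{~on~}\Sigma_T,\\
\varphi(T)=\varphi_T & \mr{~in~}\Omega,
\end{array}\right.
\end{equation*}
satisfies the observability inequality $\|\varphi(0)\|_{L^2(\Omega)^n}^2\leqslant C\int_0^T\!\int_\omega|D^*\varphi|^2\,dx\,dt$. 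Since $D^*\varphi=(\varphi_{S_1},\dots,\varphi_{S_r})$, the right-hand side only sees, on $\omega\times(0,T)$, the first component of each diagonal block, so the heart of the matter is a global Carleman estimate for the adjoint that recovers all of $\varphi$ from these $r$ observed components.

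The structural feature to exploit is that $C$, being block upper-triangular with companion diagonal blocks $C_{ii}$ whose sub-diagonal is identically $1$, has a transpose $C^*$ that is block lower-triangular with diagonal blocks carrying $1$ on the super-diagonal. Writing $\varphi^{(i)}=(\varphi_{S_i},\dots,\varphi_{S_i+s_i-1})$, the adjoint equations for $\varphi^{(i)}$ only couple the blocks $\varphi^{(1)},\dots,\varphi^{(i)}$, and inside block $i$ the equation of $\varphi_{S_i+k}$ reads, for $0\leqslant k\leqslant s_i-2$, $-\partial_t\varphi_{S_i+k}-\Delta\varphi_{S_i+k}=\varphi_{S_i+k+1}+R_{i,k}$, where $R_{i,k}$ is a bounded linear combination of $\varphi_{S_i},\dots,\varphi_{S_i+k}$ and of the components of $\varphi^{(1)},\dots,\varphi^{(i-1)}$. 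Hence, processing the blocks in the order $i=1,\dots,r$ (legitimate because each block only feeds on lower-index ones, already treated) and, within each block, iterating on $k=0,1,\dots,s_i-2$, one recovers successively local information on $\varphi_{S_i+1},\varphi_{S_i+2},\dots$ from the observed $\varphi_{S_i}$. A single block, controlled on its first component, is exactly of the form \eqref{partiel:form cascade}, its sub-diagonal coefficient $1$ trivially meeting the positivity condition on $\omega$; this is the cascade mechanism of \cite{gonzalez2010controllability,ammar2009generalization}.

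Concretely I would write the standard one-parameter Carleman inequality for the heat operator, with a Carleman weight $\rho=\rho(x,t)$, applied to each scalar component $\varphi_\ell$, then multiply these inequalities by suitable powers of the large parameter and add them, arranging that each inadmissible local term $\int_0^T\!\int_{\omega}\rho\,|\varphi_{S_i+k+1}|^2$ be replaced, through the identity above, by local terms in $\varphi_{S_i+k}$ on a slightly larger open subset and then absorbed into the left-hand side. The hard part is precisely this bootstrap of the local terms: isolating $\varphi_{S_i+k+1}$ from the equation makes $\Delta\varphi_{S_i+k}$ and $\partial_t\varphi_{S_i+k}$ appear, so the estimate does not close on $\omega$ alone and one must introduce a nested family $\omega_0\Subset\omega_1\Subset\cdots\Subset\omega$, cut-off functions, interior (Caccioppoli-type) parabolic estimates and higher-order Carleman weights, exactly as in the quoted references. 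The only features really used are that the sub-diagonal entries of each $C_{ii}$ equal $1$ and that all entries of $C$ are bounded, which is guaranteed by $C\in\mc{C}^0([0,T];\mc{L}(\mb{R}^n))$; for that reason I would not reproduce these computations but invoke the Carleman estimate and observability inequality established in \cite{ammar2009generalization,gonzalez2010controllability}.
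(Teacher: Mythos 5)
Your proposal is correct and follows essentially the same route as the paper: the paper itself does not reprove this theorem but, exactly as you do, reduces it to the Carleman-based observability inequality for the adjoint cascade system established in \cite{ammar2009generalization,gonzalez2010controllability} (via \cite{fursikov1996controllability}). Your description of the duality reduction and of the block-by-block bootstrap exploiting the unit sub-diagonal of each $C_{ii}$ accurately reflects the mechanism used in those references.
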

The proof of this result uses a Carleman estimate (see \cite{fursikov1996controllability}) and can be found in \cite{ammar2009generalization} or 
\cite{gonzalez2010controllability}. 

\subsection[Adjoint system]{Partial null controllability 
of a parabolic linear system by $m$ forces 
and adjoint system}\label{partiel:subsec dual}

\hspace*{4mm} It is nowadays well-known that the controllability has a dual concept 
called \textit{observability} 
(see for instance \cite{ammar2011recent}). 
We detail below the observability for the $\Pi_p$-controllability.

\begin{prop}\label{partiel:property equivalence contr obs}
\begin{enumerate}
\item  
System 
(\ref{partiel:syst para lin equa contr intro})
is $\Pi_p$-null controllable on the time interval $(0,T)$ 
if and only if there exists a constant $C_{obs}>0$ such that
 for all $\varphi_0=(\varphi^0_1,...,\varphi_p^0)\in L^2(\Omega)^p$  
 the solution $\varphi\in W(0,T)^n$ to the adjoint system 
\begin{equation} \label{partiel:obs syst adjoint}
\begin{array}{l}
 \left\{\begin{array}{ll}
 -\partial_t\varphi=\Delta \varphi
 +A^* \varphi&~\mr{in}~Q_T,\\
 \varphi=0&\mr{~on~}\Sigma_T,\\
\varphi(\cdot,T)=\Pi_p^*\varphi_0=(\varphi^0_1,...,\varphi_p^0,0,...,0)&~\mr{in}~\Omega
\end{array}\right.
\end{array}
\end{equation}
satisfies  the observability inequality 
 \begin{equation}\label{partiel:inegalite d observabilite lineraire2}
\|\varphi(0)\|_{L^2(\Omega)^n}^2
  \leqslant  C_{obs} \displaystyle\int_{0}^T\|B^*\varphi\|_{L^2(\omega)^m}^2\,dt.
 \end{equation}
\item System (\ref{partiel:syst para lin equa contr intro}) is $\Pi_p$-approximately 
controllable on the time interval $(0,T)$ 
if and only if for all $\varphi_0\in L^2(\Omega)^p$ 
the solution $\varphi$ to System  (\ref{partiel:obs syst adjoint}) satisfies
\begin{equation*}
 B^*\varphi\equiv0\mr{~in~}(0,T)\times \omega~\Rightarrow  \varphi\equiv0\mr{~in~}Q_T.
\end{equation*}
\end{enumerate}
\end{prop}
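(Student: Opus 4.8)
The plan is to run the classical Hilbert Uniqueness Method, adapted to the partial observation $\Pi_p$. The starting point will be the duality identity obtained by multiplying the state equation in \eqref{partiel:syst para lin equa contr intro} by a solution $\varphi$ of the adjoint system \eqref{partiel:obs syst adjoint} and integrating by parts on $Q_T$: using the self-adjointness of $-\Delta$ under homogeneous Dirichlet conditions and the definition of $A^*$, the interior terms cancel, and since $\varphi(T)=\Pi_p^*\varphi_0$ gives $\langle y(T),\Pi_p^*\varphi_0\rangle=\langle\Pi_p y(T),\varphi_0\rangle$, one obtains
\[
\langle \Pi_p y(T;y_0,u),\varphi_0\rangle_{L^2(\Omega)^p}
= \langle y_0,\varphi(0)\rangle_{L^2(\Omega)^n}
+ \int_0^T\!\!\int_\omega u\cdot B^*\varphi\,dx\,dt
\]
for all $y_0\in L^2(\Omega)^n$, $u\in L^2(Q_T)^m$ and $\varphi_0\in L^2(\Omega)^p$. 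I would make this rigorous by proving it first for smooth data and then passing to the limit, the estimate \eqref{partiel:estim sol int intro} (applied to both \eqref{partiel:syst para lin equa contr intro} and the backward problem \eqref{partiel:obs syst adjoint}) ensuring continuity of both sides.

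\emph{Part (1).} By linearity, $\Pi_p$-null controllability amounts to: for every $y_0$ there is $u$ with $\Pi_p y(T;y_0,u)=0$, which, by the identity above, is equivalent to the solvability in $u$ of
\[
\int_0^T\!\!\int_\omega u\cdot B^*\varphi\,dx\,dt=-\langle y_0,\varphi(0)\rangle_{L^2(\Omega)^n}\qquad\text{for all }\varphi_0\in L^2(\Omega)^p .
\]
For the implication ``observability $\Rightarrow$ controllability'' I would minimize over $\varphi_0\in L^2(\Omega)^p$ the functional
\[
J(\varphi_0):=\tfrac12\int_0^T\|B^*\varphi\|_{L^2(\omega)^m}^2\,dt+\langle y_0,\varphi(0)\rangle_{L^2(\Omega)^n},
\]
which is continuous and strictly convex; coercivity follows from the observability inequality \eqref{partiel:inegalite d observabilite lineraire2} combined with \eqref{partiel:estim sol int intro}. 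Writing the Euler--Lagrange equation for the unique minimizer $\hat\varphi_0$ then shows that $u:=\mathds{1}_\omega B^*\hat\varphi$ satisfies the displayed relation, hence steers $\Pi_p y$ to $0$. For the converse, $\Pi_p$-null controllability says that the continuous linear functional $\varphi_0\mapsto\langle y_0,\varphi(0)\rangle$ is controlled by $\varphi_0\mapsto \mathds{1}_\omega B^*\varphi\in L^2(\omega\times(0,T))^m$ for every $y_0$; a closed-graph / uniform-boundedness argument (the abstract fact that $\mathrm{Im}\,F_2^*\subset\mathrm{Im}\,F_1^*$ is equivalent to $\|F_2x\|\le C\|F_1x\|$) then produces the constant $C_{obs}$.

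\emph{Part (2).} Taking $y_0=0$, $\Pi_p$-approximate controllability means that $\{\Pi_p y(T;0,u):u\in L^2(Q_T)^m\}$ is dense in $L^2(\Omega)^p$. By the Hahn--Banach theorem this fails exactly when some $\varphi_0\in L^2(\Omega)^p\setminus\{0\}$ is orthogonal to this set, i.e. (by the duality identity with $y_0=0$) when $\int_0^T\int_\omega u\cdot B^*\varphi\,dx\,dt=0$ for all $u$, i.e. when $B^*\varphi\equiv0$ in $\omega\times(0,T)$. Since the only solution of \eqref{partiel:obs syst adjoint} with $\varphi_0=0$ is $\varphi\equiv0$, this is precisely the negation of the stated unique continuation implication. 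The one genuinely delicate point is the direction ``observability $\Rightarrow$ null controllability'' in Part (1): I must verify that $J$ is coercive on all of $L^2(\Omega)^p$ — this is where \eqref{partiel:inegalite d observabilite lineraire2} is essential, bounding $\|\varphi(0)\|$, hence the linear term, by the square root of the quadratic term — and that the minimizer really yields an admissible control, a routine variational computation. The rest is bookkeeping with the duality identity and elementary functional analysis.
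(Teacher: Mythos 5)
Your duality identity, your Part (2), and your converse direction in Part (1) are essentially the paper's argument: the paper encodes the same content through the operators $S_T$, $L_T$ and the abstract equivalence $\im\,\Pi_pS_T\subset\im\,\Pi_pL_T\iff\|S_T^*\Pi_p^*\varphi_0\|^2\leqslant C\|L_T^*\Pi_p^*\varphi_0\|^2$ (Coron, Lemma 2.48), together with the computations identifying $S_T^*\Pi_p^*\varphi_0=\varphi(0)$ and $L_T^*\Pi_p^*\varphi_0=\mathds{1}_\omega B^*\varphi$.

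There is, however, a genuine gap exactly at the point you flag as delicate: the implication ``observability $\Rightarrow$ $\Pi_p$-null controllability'' via minimization of $J$. The inequality \eqref{partiel:inegalite d observabilite lineraire2} bounds $\|\varphi(0)\|_{L^2(\Omega)^n}$, not $\|\varphi_0\|_{L^2(\Omega)^p}=\|\varphi(T)\|$, by the observation term, so it only shows that $J$ is bounded below and coercive with respect to the observation seminorm $\varphi_0\mapsto(\int_0^T\|B^*\varphi\|_{L^2(\omega)^m}^2\,dt)^{1/2}$. That seminorm is strictly weaker than the $L^2(\Omega)^p$-norm: because of parabolic smoothing of the backward system, high-frequency data $\varphi_0$ (e.g. multiples of eigenfunctions $w_k$ with amplitude growing like $\sqrt{\mu_k}$) have $\|\varphi_0\|_{L^2}\to\infty$ while the observation term stays bounded. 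Hence $J$ is \emph{not} coercive on $L^2(\Omega)^p$, minimizing sequences need not be bounded there, and the minimizer you invoke need not exist in that space; the Euler--Lagrange step collapses. The standard repairs are to minimize on the completion of $L^2(\Omega)^p$ for the observation seminorm, or to minimize penalized functionals (add $\varepsilon\|\varphi_0\|_{L^2(\Omega)^p}$), use \eqref{partiel:inegalite d observabilite lineraire2} to get bounds on the controls uniform in $\varepsilon$, and pass to the weak limit. Alternatively --- and this is the paper's route --- observe that the range-inclusion/adjoint-inequality lemma you already use for the converse is an \emph{equivalence}, so it delivers this direction too, with no variational argument at all.
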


\begin{proof}
For all $y_0\in L^2(\Omega)^n$, 
and $u\in L^2(Q_T)^m$, 
we denote by $y(t;y_0,u)$ the solution 
to System (\ref{partiel:syst para lin equa contr intro}) at time $t\in [0,T]$. 
For all $t\in [0,T]$, let us consider the operators $S_t$ and $L_t$  defined as follows
 \begin{equation}\label{partiel:def St Lt}
  \begin{array}{cccc}
   S_t:
  & L^2(\Omega)^n&\rightarrow&L^2(\Omega)^n\\
&y_0&\mapsto& y(t;y_0,0)
 \end{array}
 \mr{~~and~~}
   \begin{array}{cccc}
   L_t:&L^2(Q_T)^m&\rightarrow&L^2(\Omega)^n\\
 &u&\mapsto& y(t;0,u).
 \end{array}
 \end{equation}
\begin{enumerate}
\item 
System (\ref{partiel:syst para lin equa contr intro}) is $\Pi_p$-null controllable on the time interval $(0,T)$ 
if and only if 
 \begin{equation}\label{partiel:LTu=-STu_0}
 \begin{array}{c}
  \forall y_0\in L^2(\Omega)^n,~
  \exists  u\in L^2(Q_T)^m \mr{~such~that}\\
  \Pi_p L_Tu=-\Pi_p S_Ty_0.
 \end{array}
 \end{equation} 
Problem (\ref{partiel:LTu=-STu_0}) admits a solution if and only if 
\begin{equation}\label{partiel:im PST subset im PLT}
 \im ~\Pi_p S_T\subset \im ~\Pi_p L_T.
\end{equation}
The inclusion (\ref{partiel:im PST subset im PLT}) is equivalent to 
(see \cite{coron2009control}, Lemma 2.48 p. 58) 
\begin{equation}\label{partiel:prop ine obs1}
\begin{array}{c}
 \exists C>0\mr{~such~that~}\forall \varphi_0 \in L^2(\Omega)^p,\\
 \|S_T^*\Pi_p ^*\varphi_0\|_{L^2(\Omega)^n}^2
 \leqslant C\|L_T^*\Pi_p ^*\varphi_0\|^2_{L^2(Q_T)^m}.
\end{array}
 \end{equation}
We note that
\begin{equation*}
  \begin{array}{cccc}
   S_T^*\Pi_p ^*:&L^2(\Omega)^p&\rightarrow & L^2(\Omega)^n\\
 &\varphi_0&\mapsto&\varphi(0)
 \end{array}
 \mr{~~and~~}
   \begin{array}{cccc}
   L_T^*\Pi_p ^*:&L^2(\Omega)^p&\rightarrow& L^2(Q_T)^m\\
 &\varphi_0&\mapsto&\mathds{1}_{\omega}B^*\varphi,
 \end{array}
 \end{equation*} 
where 
$\varphi\in W(0,T)^n$ is the solution to System (\ref{partiel:obs syst adjoint}). 
Indeed, for all $y_0\in L^2(\Omega)^n$, 
$u\in L^2(Q_T)^m$ and $\varphi_0\in L^2(\Omega)^p$
\begin{equation}\label{partiel:prop ine obs2}
\begin{array}{rcl}
   \langle \Pi_p S_Ty_0,\varphi_0\rangle_{L^2(\Omega)^p}
  &=&\langle y(T;y_0,0), \varphi(T)\rangle_{L^2(\Omega)^n}\\
  &=&\displaystyle\int_{0}^T\langle \partial_ty(s;y_0,0),\varphi(s) \rangle_{L^2(\Omega)^n}\mr{d}s\\
  &&~~~~~~~+\displaystyle\int_{0}^T\langle y(s;y_0,0),\partial_t\varphi(s) \rangle_{L^2(\Omega)^n}\mr{d}s
  +\langle y_0, \varphi(0)\rangle_{L^2(\Omega)^n}\\
   &=&\langle y_0, \varphi(0)\rangle_{L^2(\Omega)^n}
\end{array}
\end{equation}
and
\begin{equation}\label{partiel:prop ine obs3}
\begin{array}{rcl}
  \langle \Pi_pL_Tu,\varphi_0\rangle_{L^2(\Omega)^p}
  &=&\langle y(T;0,u), \varphi(T)\rangle_{L^2(\Omega)^n}\\
  &=&\displaystyle\int_{0}^T\langle \partial_ty(s;0,u),\varphi(s) \rangle_{L^2(\Omega)^n}\mr{d}s
  +\displaystyle\int_{0}^T\langle y(s;0,u),\partial_t\varphi(s) \rangle_{L^2(\Omega)^n}\mr{d}s\\
   &=&\langle \mathds{1}_{\omega}
   B u,\varphi \rangle_{L^2(Q_T)^n}
   =\langle 
   u,\mathds{1}_{\omega}B^*\varphi \rangle_{L^2(Q_T)^m}.
\end{array}
\end{equation}
The  inequality (\ref{partiel:prop ine obs1}) combined with (\ref{partiel:prop ine obs2})-(\ref{partiel:prop ine obs3}) 
 lead to the conclusion.
\item In view of the definition in (\ref{partiel:def St Lt}) of $S_T$ and $L_T$,  
System (\ref{partiel:syst para lin equa contr intro}) is 
$\Pi_p$-approximately controllable on the time interval $(0,T)$ if and only if 
 \begin{equation*}\begin{array}{c}
\forall (y_0,y_T)\in L^2(\Omega)^n\times L^2(\Omega)^p,~ \forall
\varepsilon>0,~
\exists  u\in L^2(Q_T)^m \mr{~such~ that}\\
  \|\Pi_p L_Tu+\Pi_p S_Ty_0-y_T\|_{L^2(\Omega)^p}\leqslant \varepsilon.
 \end{array}\end{equation*} 
 This is equivalent to
  \begin{equation*}\begin{array}{c}
 \forall\varepsilon>0 ,~ \forall z_T\in L^2(\Omega)^p, 
  \exists  u\in L^2(Q_T)^m \mr{~such~ that}\\
  \|\Pi_p L_Tu-z_T\|_{L^2(\Omega)^p}\leqslant \varepsilon.
  \end{array}\end{equation*}
 That means 
 \begin{equation*}
 \overline{ \Pi_p L_T(L^2(Q_T)^m)}=L^2(\Omega)^p.
 \end{equation*}
In other words
 \begin{equation*}
  \mr{ker}~L_T^*\Pi_p ^*=\{0\}.
 \end{equation*}
 Thus System (\ref{partiel:syst para lin equa contr intro}) is 
 $\Pi_p$-approximately controllable on the time interval $(0,T)$ if and only if 
for all $\varphi_0\in L^2(\Omega)^p$
 \begin{equation*}
 L_T^*\Pi_p ^*\varphi_0=\mathds{1}_{\omega}B^*\varphi\equiv0\mr{~in~}Q_T~\Rightarrow  \varphi\equiv0\mr{~in~}Q_T.
\end{equation*}
 \end{enumerate}
\end{proof}


\begin{corol}\label{partiel:corol estim control partiel}
Let us suppose that for all $\varphi_0\in L^2(\Omega)^p$, the solution $\varphi$ 
to the adjoint System (\ref{partiel:obs syst adjoint}) satisfies 
the observability inequality (\ref{partiel:inegalite d observabilite lineraire2}). 
Then for all initial condition $y_0\in L^2(\Omega)^n$, there exists a control $u\in L^2(q_T)^m$ 
($q_T:=\omega\times(0,T)$) 
such that the solution $y$ to System (\ref{partiel:syst para lin equa contr intro}) satisfies 
$ \Pi_p y(T)\equiv0\mr{~in~}\Omega$ 
and 
\begin{equation}\label{partiel:estim control partiel}
 \|u\|_{L^2(q_T)^m}\leqslant \sqrt{C_{obs}}\|y_0\|_{L^2(\Omega)^n}.
\end{equation}
\end{corol}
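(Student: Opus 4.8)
The plan is to run the classical penalized Hilbert Uniqueness Method (HUM). For each $\varepsilon>0$ and each initial datum $y_0\in L^2(\Omega)^n$, I would introduce the functional
\begin{equation*}
J_\varepsilon(\varphi_0):=\frac12\int_0^T\|B^*\varphi\|_{L^2(\omega)^m}^2\,dt+\varepsilon\|\varphi_0\|_{L^2(\Omega)^p}+\langle y_0,\varphi(0)\rangle_{L^2(\Omega)^n},
\end{equation*}
defined on $\varphi_0\in L^2(\Omega)^p$, where $\varphi\in W(0,T)^n$ is the solution to the adjoint system \eqref{partiel:obs syst adjoint} with terminal datum $\Pi_p^*\varphi_0$. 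This functional is continuous, strictly convex, and thanks to the observability inequality \eqref{partiel:inegalite d observabilite lineraire2} it is coercive: $\|\varphi(0)\|_{L^2(\Omega)^n}^2\leqslant C_{obs}\int_0^T\|B^*\varphi\|_{L^2(\omega)^m}^2\,dt$ together with the Cauchy--Schwarz bound $|\langle y_0,\varphi(0)\rangle|\leqslant\|y_0\|\,\|\varphi(0)\|$ forces $J_\varepsilon(\varphi_0)\to+\infty$ as $\|\varphi_0\|\to\infty$. Hence $J_\varepsilon$ admits a unique minimizer $\varphi_0^\varepsilon$.

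Next I would write the Euler--Lagrange (variational) characterization of the minimizer and read off that the control $u^\varepsilon:=\mathds{1}_\omega B^*\varphi^\varepsilon$ (with $\varphi^\varepsilon$ the adjoint solution associated with $\varphi_0^\varepsilon$) drives the state to $\|\Pi_p y(T;y_0,u^\varepsilon)\|_{L^2(\Omega)^p}\leqslant\varepsilon$; this uses the duality identities \eqref{partiel:prop ine obs2}--\eqref{partiel:prop ine obs3} exactly as in the proof of Proposition \ref{partiel:property equivalence contr obs}. The key quantitative step is the uniform bound: evaluating $J_\varepsilon$ at its minimum and using $J_\varepsilon(\varphi_0^\varepsilon)\leqslant J_\varepsilon(0)=0$ gives
\begin{equation*}
\frac12\int_0^T\|B^*\varphi^\varepsilon\|_{L^2(\omega)^m}^2\,dt\leqslant-\langle y_0,\varphi^\varepsilon(0)\rangle\leqslant\|y_0\|_{L^2(\Omega)^n}\,\|\varphi^\varepsilon(0)\|_{L^2(\Omega)^n}\leqslant\|y_0\|_{L^2(\Omega)^n}\sqrt{C_{obs}}\Bigl(\int_0^T\|B^*\varphi^\varepsilon\|_{L^2(\omega)^m}^2\,dt\Bigr)^{1/2},
\end{equation*}
which yields $\|u^\varepsilon\|_{L^2(q_T)^m}^2=\int_0^T\|B^*\varphi^\varepsilon\|_{L^2(\omega)^m}^2\,dt\leqslant 4\,C_{obs}\,\|y_0\|_{L^2(\Omega)^n}^2$; a slightly more careful bookkeeping (absorbing the $\varepsilon$ term, which is nonnegative, and not throwing away the factor $\tfrac12$ wastefully) gives the sharp constant $\|u^\varepsilon\|_{L^2(q_T)^m}\leqslant\sqrt{C_{obs}}\,\|y_0\|_{L^2(\Omega)^n}$.

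Finally I would pass to the limit $\varepsilon\to0$. The family $(u^\varepsilon)$ is bounded in $L^2(q_T)^m$ by the estimate just obtained, so up to a subsequence $u^\varepsilon\rightharpoonup u$ weakly in $L^2(q_T)^m$, with $\|u\|_{L^2(q_T)^m}\leqslant\sqrt{C_{obs}}\|y_0\|_{L^2(\Omega)^n}$ by weak lower semicontinuity of the norm. By continuous dependence of solutions on the control (estimate \eqref{partiel:estim sol int intro}) and weak continuity of the solution map, $\Pi_p y(T;y_0,u^\varepsilon)\rightharpoonup\Pi_p y(T;y_0,u)$ in $L^2(\Omega)^p$; since $\|\Pi_p y(T;y_0,u^\varepsilon)\|\leqslant\varepsilon\to0$, the limit satisfies $\Pi_p y(T;y_0,u)\equiv0$ in $\Omega$. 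The main (very mild) obstacle is purely bookkeeping: one must track the constants in the coercivity estimate carefully to land on the precise constant $\sqrt{C_{obs}}$ rather than a larger multiple of it, and one must justify the weak limit passage using the linearity and the a priori bound \eqref{partiel:estim sol int intro} rather than any compactness in the terminal-time evaluation.
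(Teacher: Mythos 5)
Your overall strategy is exactly the classical one the paper has in mind: the proof is omitted in the text and referred to the variational/penalized-HUM method of Fern\'andez-Cara and Zuazua, which is what you reconstruct (minimize $J_\varepsilon$ with the non-smooth penalty $\varepsilon\|\varphi_0\|$, get approximate controls with $\|\Pi_p y(T;y_0,u^\varepsilon)\|\leqslant\varepsilon$, bound them uniformly via the observability inequality, and pass to the weak limit). So the route is the right one and the limit passage is fine.

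One step, however, would fail as written: the claim that the sharp constant $\sqrt{C_{obs}}$ comes from the comparison $J_\varepsilon(\varphi_0^\varepsilon)\leqslant J_\varepsilon(0)=0$ by ``more careful bookkeeping''. That comparison gives $\tfrac12\|u^\varepsilon\|^2\leqslant\sqrt{C_{obs}}\,\|y_0\|\,\|u^\varepsilon\|$, hence only $\|u^\varepsilon\|\leqslant 2\sqrt{C_{obs}}\|y_0\|$; the factor $\tfrac12$ is built into the functional and cannot be recovered from the minimum-value comparison, and dropping the nonnegative $\varepsilon$-term does not help. The correct way to reach the constant in \eqref{partiel:estim control partiel} is to use the first-order optimality condition at the minimizer tested against $\varphi_0^\varepsilon$ itself (when $\varphi_0^\varepsilon\neq 0$; the case $\varphi_0^\varepsilon=0$ gives $u^\varepsilon=0$ and there is nothing to prove), namely $\int_0^T\!\|B^*\varphi^\varepsilon\|_{L^2(\omega)^m}^2\,dt+\varepsilon\|\varphi_0^\varepsilon\|+\langle y_0,\varphi^\varepsilon(0)\rangle_{L^2(\Omega)^n}=0$, which yields $\|u^\varepsilon\|_{L^2(q_T)^m}^2\leqslant -\langle y_0,\varphi^\varepsilon(0)\rangle\leqslant\sqrt{C_{obs}}\,\|y_0\|_{L^2(\Omega)^n}\,\|u^\varepsilon\|_{L^2(q_T)^m}$, i.e.\ the bound with constant $\sqrt{C_{obs}}$, uniformly in $\varepsilon$; weak lower semicontinuity then transfers it to the limit control. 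Since you already invoke the Euler--Lagrange characterization to prove $\|\Pi_p y(T;y_0,u^\varepsilon)\|\leqslant\varepsilon$, this is a local repair rather than a change of method, but as stated the quantitative step does not deliver the estimate claimed in the corollary.
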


The proof is classical and will be omitted (estimate (\ref{partiel:estim control partiel}) can be  obtained directly following the method developed 
in \cite{zuazua_fernandez_00}).

\section{Partial null controllability with constant coupling matrices}\label{partiel:section constant}

\hspace*{4mm}


Let us consider the system
\begin{equation}\label{partiel:syst contant case}
\begin{array}{l}
  \left\{\begin{array}{ll}
   \partial_t y=\Delta y+Ay+B\mathds{1}_{\omega}u&\mr{~in~}Q_T,\\
        y=0&\mr{~on}~\Sigma_T,\\
	y(0)=y_0 &~\mr{in}~\Omega,
        \end{array}
\right.
\end{array}
\end{equation}
where $y_0\in L^2(\Omega)^n$, $u\in L^2(Q_T)^m$, 
$ A\in\mc{L}(\mb{R}^{n})$ and $ B\in\mc{L}(\mb{R}^m;\mb{R}^n)$. 
Let the natural number  $s$ be defined by
 \begin{equation}\label{partiel:def s const one force}
  s:=\mr{rank} ~[A|B]
 \end{equation}
 and $X\subset \mb{R}^n$ be the linear space spanned by the columns of $[A|B]$. 

 In this section, we prove Theorem \ref{partiel:theorem contro coupl constant} in two steps. 
In subsection \ref{partiel:section one force}, we begin by studying 
the case where $B\in\mb{R}^n$ and the general case is considered in subsection 
\ref{partiel:section m forces}. 

All along this section, we will use the  lemma below which proof is straightforward.

 \begin{Lemme}\label{partiel:lemme prelim one force2}
 Let be $y_0\in L^2(\Omega)^n$, $u\in L^2(Q_T)^m$ and $P\in\mc{C}^1([0,T],\mc{L}(\mb{R}^n))$ 
 such that $P(t)$ is invertible for all $t\in[0,T]$. 
 Then  the change of variable  $w=P^{-1}(t)y$ transforms System (\ref{partiel:syst contant case})  
 into the equivalent system 
\begin{equation}\label{partiel:syst contant case w lemme one force}
\begin{array}{l}
  \left\{\begin{array}{ll}
   \partial_t w=\Delta w+C(t)w+D(t)\mathds{1}_{\omega}u&\mr{~in~}Q_T,\\
        w=0&\mr{~on}~\Sigma_T,\\
        w(0)=w_0 &~\mr{in}~\Omega,
        \end{array}
\right.
\end{array}
\end{equation}
with $w_0:=P^{-1}(0)y_0$, $C(t):=-P^{-1}(t)\partial_t P(t)+P^{-1}(t)AP(t)$ and $D(t):=P^{-1}(t)B$. 
Moreover
\begin{equation*}
\Pi_py(T)\equiv0\mr{~in~} \Omega~\Leftrightarrow~ \Pi_pP(T)w(T)\equiv0 \mr{~in~} \Omega. 
\end{equation*}
If $P$ is constant, we have
\begin{equation*}
[C|D]=P^{-1}[A|B]. 
\end{equation*}
\end{Lemme}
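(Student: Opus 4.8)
The lemma asserts an exact change of variables, so the plan is simply to perform it and verify, line by line, that $w:=P^{-1}(\cdot)\,y$ solves System~(\ref{partiel:syst contant case w lemme one force}) in $W(0,T)^n$. First I would record the standard facts that, since $P\in\mc{C}^1([0,T],\mc{L}(\mb{R}^n))$ with $P(t)$ invertible for every $t$, the map $t\mapsto P^{-1}(t)$ is again of class $\mc{C}^1$ on $[0,T]$ with $\partial_t\bigl(P^{-1}\bigr)=-P^{-1}(\partial_t P)P^{-1}$ (differentiate $PP^{-1}=I$), and that $P$, $P^{-1}$ and $\partial_t P$ are bounded on the compact interval $[0,T]$. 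This boundedness guarantees that $y\in W(0,T)^n$ if and only if $w\in W(0,T)^n$, so the two Cauchy problems are genuinely equivalent and the computation below can be carried out in the weak ($L^2(0,T;H^{-1}(\Omega))$-valued) sense.

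Next, differentiating $w=P^{-1}(t)y$ in time and inserting $\partial_t y=\Delta y+Ay+B\mathds{1}_{\omega}u$ gives $\partial_t w=\bigl(\partial_t P^{-1}\bigr)y+P^{-1}\partial_t y=-P^{-1}(\partial_t P)P^{-1}y+P^{-1}\Delta y+P^{-1}Ay+P^{-1}B\mathds{1}_{\omega}u$. Here I would use that the matrix $P^{-1}(t)$ does not depend on the space variable, hence commutes with $\Delta$: $P^{-1}(t)\Delta y=\Delta\bigl(P^{-1}(t)y\bigr)=\Delta w$. Substituting $y=Pw$ in the remaining terms yields $\partial_t w=\Delta w+\bigl(-P^{-1}\partial_t P+P^{-1}AP\bigr)w+P^{-1}B\mathds{1}_{\omega}u=\Delta w+C(t)w+D(t)\mathds{1}_{\omega}u$, which is exactly the equation of System~(\ref{partiel:syst contant case w lemme one force}). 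The homogeneous Dirichlet condition transfers because $P^{-1}(t)$ is invertible, so $y=0$ on $\Sigma_T$ iff $w=0$ on $\Sigma_T$, and $w(0)=P^{-1}(0)y_0=w_0$ gives the initial datum. Finally, the equivalence $\Pi_py(T)\equiv0\Leftrightarrow\Pi_pP(T)w(T)\equiv0$ in $\Omega$ is immediate from $y(T)=P(T)w(T)$.

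It remains to treat the constant case. If $P$ is constant then $\partial_t P=0$, so $C=P^{-1}AP$ and $D=P^{-1}B$; telescoping the products $PP^{-1}=I$ gives $C^kD=(P^{-1}AP)^kP^{-1}B=P^{-1}A^kB$ for every $k\geqslant0$, whence, block column by block column, $[C|D]=(D\,|\,CD\,|\,\dots\,|\,C^{n-1}D)=P^{-1}(B\,|\,AB\,|\,\dots\,|\,A^{n-1}B)=P^{-1}[A|B]$.

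I do not expect any real obstacle here: the lemma is a routine conjugation argument. The only points worth stating carefully are the formula for $\partial_t(P^{-1})$, the commutation of the spatial Laplacian with the spatially constant matrix $P(t)$, and the boundedness of $P^{\pm1}$ and $\partial_t P$ on $[0,T]$, which is what keeps the transformed solution in the functional space $W(0,T)^n$ and makes the change of variables reversible.
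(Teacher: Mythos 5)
Your proof is correct and is exactly the routine conjugation argument the paper has in mind: the authors omit the proof, stating only that it is straightforward, and your computation (the formula for $\partial_t(P^{-1})$, commutation of the spatially constant matrix $P(t)$ with $\Delta$, transfer of the boundary/initial data, and $C^kD=P^{-1}A^kB$ in the constant case) fills it in as expected. No gaps.
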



\subsection{One control force}\label{partiel:section one force}

\hspace*{4mm} In this subsection, we suppose that $A\in \mc{L}(\mb{R}^n)$,  
 $B\in \mb{R}^n$ and denote by $[A|B]=:(k_{ij})_{1\leqslant i,j\leqslant n}$ and $s:=\mr{rank}~[A|B]$. 
 We  begin with the following observation.

\begin{Lemme}\label{partiel:lemme one force sous matrice}
 $\{B,...,A^{s-1}B\}$ is a basis of $X$.
 \end{Lemme}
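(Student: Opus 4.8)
The plan is to show that $\{B, AB, \dots, A^{s-1}B\}$ is linearly independent and that it spans $X$, the column space of the Kalman matrix $[A|B] = (B\,|\,AB\,|\,\dots\,|\,A^{n-1}B)$. Since $X$ is by definition spanned by the $n$ vectors $B, AB, \dots, A^{n-1}B$ and $\dim X = s$, it suffices to prove that $B, AB, \dots, A^{s-1}B$ are linearly independent; then, being $s$ independent vectors inside the $s$-dimensional space $X$, they automatically form a basis.

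First I would recall the standard fact from linear algebra underlying the Cayley--Hamilton / Krylov-subspace argument: the sequence of subspaces $V_k := \mathrm{span}\{B, AB, \dots, A^{k-1}B\}$ is nondecreasing, and once it stabilizes it stays constant, i.e. if $V_{k+1} = V_k$ for some $k$, then $A^k B \in V_k$, hence $A^{k+1}B = A(A^k B) \in A V_k \subseteq V_{k+1} = V_k$, and inductively $V_j = V_k$ for all $j \geqslant k$. Consequently the dimensions $\dim V_1 \leqslant \dim V_2 \leqslant \cdots$ strictly increase by exactly one at each step until they stabilize, after which they are constant; since $X = V_n$ has dimension $s$, the stabilization must occur at step $s$, that is $\dim V_s = s$ and $V_s = V_{s+1} = \cdots = V_n = X$. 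In particular $B, AB, \dots, A^{s-1}B$ span an $s$-dimensional space, so they are linearly independent and form a basis of $X$.

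The only mildly delicate point — and the main thing to get right — is the claim that the dimension increases by \emph{exactly} one (not potentially more) at each non-stabilized step; but this is immediate because $V_{k+1}$ is obtained from $V_k$ by adjoining the single vector $A^k B$, so $\dim V_{k+1} \leqslant \dim V_k + 1$. Combined with the strict monotonicity before stabilization, one gets $\dim V_k = k$ for all $k \leqslant s$, and $\dim V_k = s$ for $k \geqslant s$. I expect no real obstacle here: the statement is essentially the elementary structure theory of Krylov subspaces, which is exactly why the paper calls the proof "straightforward" and relegates it to a one-line lemma. I would present it in two or three sentences: introduce $V_k$, note monotonicity and the at-most-one-increment bound, invoke the stabilization argument, and conclude that $s$ is precisely the index at which the Krylov sequence saturates, yielding the basis $\{B, AB, \dots, A^{s-1}B\}$ of $X$.
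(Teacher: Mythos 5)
Your proof is correct and rests on exactly the same mechanism as the paper's: once $A^{k}B$ falls into the span of the earlier powers, multiplying by $A$ shows all subsequent powers do too, so the Krylov chain can only stall permanently, forcing the stall to occur precisely at index $s=\mathrm{rank}\,[A|B]$. The paper phrases this by contradiction (assuming the family is not a basis and deriving $\mathrm{rank}\,[A|B]<s$) while you argue directly via stabilization of the subspaces $V_k$, but this is only a presentational difference.
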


 \begin{proof}
 If $s=\mr{rank}~[A|B]=1$, then the conclusion of the lemma is clearly true, since $B\neq0$. Let $s\geqslant 2$. 
   Suppose to the contrary that $\{B,...,A^{s-1}B\}$ is not a basis of $X$, that is 
   for some $i\in \{0,...,s-2\}$ 
   the family $\{B,...,A^{i}B\}$ is linearly independent 
   and $A^{i+1}B\in\mr{span}(B,...,A^{i}B)$, that is 
   $A^{i+1}B=\sum_{k=0}^i\alpha_{k}A^{k}B$ 
   with $\alpha_0,...,\alpha_i\in\mb{R}$.  
   Multiplying by $A$ this expression, we deduce that  
   $A^{i+2}B\in\mr{span}(AB,...,A^{i+1}B)=\mr{span}(B,...,A^{i}B)$. 
 Thus, by induction, $A^lB\in\mr{span}(B,...,A^{i}B)$ 
   for all $l\in  \{ i+1,...,n-1\}$. 
 Then $\mr{rank} ~(B|AB|...|A^{n-1}B)=\mr{rank} ~(B|AB|...|A^{i}B)=i+1<s$, contradicting with (\ref{partiel:def s const one force}). 
 \end{proof}

\begin{proof}[Proof of Theorem \ref{partiel:theorem contro coupl constant}]
Let us  remark that 
\begin{equation}\label{partiel:p<s}
\mr{rank}~\Pi_p[A|B]=\mr{dim}~
 \Pi_p[A|B](\mb{R}^n)\leqslant   \mr{rank}~[A|B]=s. 
\end{equation}
Lemma \ref{partiel:lemme one force sous matrice} yields
\begin{equation}\label{partiel:rank B AB As-1B =s}
\mr{rank}~(B|AB|...|A^{s-1}B)= \mr{rank}~[A|B]=s.
\end{equation}
Thus, for all $l\in \{ s,s+1,..., n\} $ and  $i\in\{0,...,s-1\}$, 
there exist $\alpha_{l,i}$   such that 
\begin{equation}\label{partiel:comb lin A^s}
A^lB=\sum\limits_{i=0}^{s-1}\alpha_{l,i}A^iB. 
\end{equation}
Since,  for all  $l \in \{ s,..., n\} $, 
 $\Pi_pA^lB=\sum_{i=0}^{s-1}\alpha_{l,i}\Pi_pA^iB$, then
 \begin{equation}\label{partiel:rang PB PAB}
\mr{rank}~\Pi_p(B|AB|...|A^{s-1}B)=\mr{rank}~\Pi_p[A|B].
\end{equation}
We first prove in (a) that condition (\ref{partiel:cond cas constant une force}) is sufficient, and then in (b) that this condition is necessary. 

\hspace*{2mm}
(a) Sufficiency  part: 
Let us assume first that condition (\ref{partiel:cond cas constant une force}) holds.  
Then, using (\ref{partiel:rang PB PAB}), we have
 \begin{equation}\label{partiel:rang PB PAB p}
\mr{rank}~\Pi_p(B|AB|...|A^{s-1}B)=p.
\end{equation}
Let be $y_0\in L^2(\Omega)^n$. 
We will study the $\Pi_p$-null controllability of System (\ref{partiel:syst contant case}) 
according to the values of $p$ and $s$. 

\begin{enumerate}[\textit{Case} 1]
 \item : $p=s$. 
 The idea is to find an appropriate change of variable  $P$ to the solution $y$ to System (\ref{partiel:syst contant case}).
  More precisely, we would like   the new variable $w:=P^{-1}y$ to be the solution to a cascade system  
  and then, apply  Theorem \ref{partiel:theo cascade}. So 
  let us define, for all $t\in [0,T]$,
\begin{equation}\label{partiel:def M p=s}
P(t):=(B|AB|...|A^{s-1}B|P_{s+1}(t)|...|P_n(t)), 
\end{equation}
 where, for all $l \in \{ s+1,..., n\} $,  $P_l(t)$ 
is the solution in $\mc{C}^{1}([0,T])^n$ to the system of ordinary differential equations
\begin{equation}\label{partiel:system K_2 p=s}
 \left\{\begin{array}{l}
       \partial_tP_l(t)=AP_l(t)\mr{~in~}[0,T],\\
       P_l(T)=e_{l}.
        \end{array}
\right.
\end{equation}
Using (\ref{partiel:def M p=s}) and (\ref{partiel:system K_2 p=s}), we can write 
\begin{equation}\label{partiel:definition K(T) p=s}
 P(T)=\left(\begin{array}{cc}
        P_{11}&0\\
        P_{21}&I_{n-s}
            \end{array}
\right),
\end{equation}
where $P_{11}:=\Pi_p(B|AB|...|A^{s-1}B)\in \mc{L}(\mb{R}^s)$,  
$P_{21}\in\mc{L}(\mb{R}^s,\mb{R}^{n-s})$ and $I_{n-s}$ is the identity matrix of size $n-s$. 
Using (\ref{partiel:rang PB PAB p}), $P_{11}$ is invertible and thus $P(T)$ also. 
Furthermore, since $P(t)$ is an element of $\mc{C}^1([0,T],\mc{L}(\mb{R}^n))$ 
continuous in time on the time interval $[0,T]$, 
there exists $T^*\in [0,T)$  such that 
$P(t)$ is invertible for all $t\in[T^*,T]$. \\
\hspace*{4mm} Let us suppose first that $T^*=0$. Since $P(t)$ is 
an element of $\mc{C}^1([0,T],\mc{L}(\mb{R}^n))$ and invertible, 
in view of Lemma \ref{partiel:lemme prelim one force2}:  
for a fixed 
control $u\in L^2(Q_T)$, 
$y$ is the solution to System (\ref{partiel:syst contant case}) 
if and only if $w:=P(t)^{-1}y$ is the solution to System 
(\ref{partiel:syst contant case w lemme one force}) 
where  $C$, $D$ are given by 
\begin{equation*}
 C(t):=-P^{-1}(t)\partial_t P(t)+P^{-1}(t)AP(t)\mr{~~~and~~~} D(t):=P^{-1}(t)B,
\end{equation*}
for all $t\in [0,T]$. Using (\ref{partiel:comb lin A^s}) and (\ref{partiel:system K_2 p=s}), we obtain
\begin{equation}\label{partiel:equation K p=s}
 \left\{\begin{array}{ll}
      -\partial_tP(t)+AP(t)=(AB|...|A^sB|0|...|0)=P(t)\left(
  \begin{array}{cc}
   C_{11}&0\\
   0&0
  \end{array}
  \right)&\mr{~in~}[0,T],\\
      P(t)e_1=B&\mr{~in~}[0,T],
        \end{array}
\right.
\end{equation} 
 where
 \begin{equation}\label{partiel:def C11 reciproque}
 C_{11}:=\left(
  \begin{array}{ccccc}
  0&0&0&\ldots&\alpha_{s,0}\\
  1&0&0&\ldots&\alpha_{s,1}\\
  0&1&0&\ldots&\alpha_{s,2}\\
  \vdots&\vdots&\ddots&\ddots&\vdots\\
  0&0&\ldots&1&\alpha_{s,s-1}
  \end{array}
  \right)\in \mc{L}(\mb{R}^s).
\end{equation}
Then 
\begin{equation}\label{partiel:def C p=s}
 C(t)=\left(
  \begin{array}{cc}
   C_{11}&0\\
   0&0
  \end{array}
  \right)\mr{~and~}D(t)=e_1.
\end{equation}
Using Theorem \ref{partiel:theo cascade}, there exists 
$u\in L^2(Q_T)$ such that the solution to System (\ref{partiel:syst contant case w lemme one force}) 
satisfies  $w_1(T)\equiv...\equiv w_s(T)\equiv0$ in $\Omega$. Moreover, using (\ref{partiel:definition K(T) p=s}), we have 
 \begin{equation*}
\Pi_sy(T)=(y_{1}(T),...,y_{s}(T))=P_{11}(w_1(T),...,w_s(T))\equiv0\mr{~in~}\Omega. 
\end{equation*}

 \hspace*{4mm} If now $T^*\neq 0$, let $\overline{y}$ be the solution in $W(0,T^*)^n$ 
 to System (\ref{partiel:syst contant case}) with the initial condition $\overline{y}(0)=y_0$ 
 in $\Omega$ and the control $u\equiv0$ in $\Omega\times(0,T^*)$. 
 We use the same argument as above to prove that 
  System (\ref{partiel:syst contant case}) is $\Pi_s$-null controllable on the time interval $[T^*,T]$.  
 Let $v$ be a control in $L^2(\Omega\times(T^*,T))$ such that the solution $z$ in $W(T^*,T)^n$ to System (\ref{partiel:syst contant case}) 
 with the initial condition $z(T^*)=\overline{y}(T^*)$ in $\Omega$ and the control $v$ satisfies $\Pi_sz(T)\equiv0$ in $\Omega$. 
 Thus if we define $y$ and $u$ as follows 
 \begin{equation*}
 (y,u):=\left\{\begin{array}{l}
       (\overline{y},0)\mr{~if~}t\in [0,T^*],\\
       (z,v)\mr{~if~}t\in [T^*,T],
          \end{array}
\right.
\end{equation*}
then, for this control $u$, $y$ is the solution in $W(0,T)^n$ to System (\ref{partiel:syst contant case}). 
Moreover $y$ 
 satisfies 
 \begin{equation*}
  \Pi_sy(T)\equiv0 \mr{~in~} \Omega.
 \end{equation*}

 \item : $p<s$. 
 In order to use Case 1, 
we would like  to apply an appropriate change of variable $Q$ to the solution $y$ to System (\ref{partiel:syst contant case}).   
If we denote by $[A|B]=:(k_{ij})_{ij}$, equalities (\ref{partiel:rank B AB As-1B =s}) and (\ref{partiel:rang PB PAB p}) can be rewritten
\begin{equation*}
             \mr{rank}~\left(\begin{array}{ccc}k_{11}&\cdots&k_{1s}\\
                \vdots&&\vdots\\
                k_{n1}&\cdots&k_{ns}
               \end{array}\right)=s
                \mr{~~~and~~~}      
               \mr{rank}~\left(\begin{array}{ccc}k_{11}&\cdots&k_{1s}\\
                \vdots&&\vdots\\
                k_{p1}&\cdots&k_{ps}
               \end{array}\right)=p.
\end{equation*}
Then there exist distinct natural numbers $\lambda_{p+1},...,\lambda_{s}$ 
such that  $\{\lambda_{p+1},...,\lambda_{s}\}\subset \{ p+1,...,n\}$ 
and 
\begin{equation}\label{partiel:cas const sous matrice}
\mr{rank}\left(\begin{array}{ccc}k_{11}&\cdots&k_{1s}\\
                \vdots&&\vdots\\
                k_{p1}&\cdots&k_{ps}\\
                k_{\lambda_{p+1}1}&\cdots&k_{\lambda_{p+1}s}\\
                \vdots&&\vdots\\
                k_{\lambda_{s}1}&\cdots&k_{\lambda_{s}s}\\
               \end{array}\right)=s.
\end{equation}
Let  $Q$ be the  matrix defined  by
\begin{equation*}
Q :=(e_{1}|...|e_p|e_{\lambda_{p+1}}|...|e_{\lambda_n})^{\mr{t}},
\end{equation*}
where 
$\{\lambda_{s+1},...,\lambda_{n}\}:
=\{ p+1,...,n\}\backslash\{\lambda_{p+1},...,\lambda_s\}$. 
 $Q$ is invertible, 
so taking   $w:=P^{-1}y$ with $P:=Q^{-1}$, 
for a fixed control $u$ in $L^2(Q_T)$, 
$y$ is solution to System (\ref{partiel:syst contant case}) 
if and only if $w$ is solution to System (\ref{partiel:syst contant case w lemme one force}) 
where $w_0:=Qy_0$, $ C:=QAQ^{-1}\in\mc{L}(\mb{R}^{n})$ and 
$D:=QB\in\mc{L}(\mb{R};\mb{R}^n)$. 
Moreover there holds
\begin{equation*}
 [C|D]=Q[A|B].
\end{equation*}
Thus, 
equation (\ref{partiel:cas const sous matrice}) yields
\begin{equation*}
 \mr{rank}~\Pi_s[C|D]=\mr{rank}~\Pi_sQ[A|B]=
 \mr{rank}~\left(\begin{array}{ccc}k_{11}&\cdots&k_{1n}\\
                \vdots&&\vdots\\
                k_{p1}&\cdots&k_{pn}\\
                k_{\lambda_{p+1}1}&\cdots&k_{\lambda_{p+1}n}\\
                \vdots&&\vdots\\
                k_{\lambda_{s}1}&\cdots&k_{\lambda_{s}n}\\
               \end{array}\right)=s.
\end{equation*}
Since $\mr{rank}~[C|D]=\mr{rank}~[A|B]=s$, we proceed as in Case 1 forward deduce that 
System (\ref{partiel:syst contant case w lemme one force})  is $\Pi_s$-null controllable, 
that is there exists a control $u\in L^2(Q_T)$ such that the solution $w$ to 
System (\ref{partiel:syst contant case w lemme one force})  satisfies 
\begin{equation*}
\Pi_sw(T)\equiv 0\mr{~ in~} \Omega.
\end{equation*}
Moreover the matrix $Q$ can be rewritten
\begin{equation*}
Q=\left(\begin{array}{cc}I_p&0\\
                0&Q_{22}\\
               \end{array}\right),
\end{equation*}
where $Q_{22}\in\mc{L}(\mb{R}^{n-p})$.
Thus 
\begin{equation*}
 \Pi_py(T)=\Pi_pQy(T)=\Pi_pw(T)\equiv 0\mr{~ in~} \Omega.
\end{equation*}

 \end{enumerate}

\hspace*{2mm}(b) Necessary part: Let us denote by $[A|B]=:(k_{ij})_{ij}$. 
 We suppose now that (\ref{partiel:cond cas constant une force}) is not satisfied: 
 there exist $\overline{p}\in\{ 1,...,p\}$ and $\beta_i$ 
 for all $i\in\{1,...,p\}\backslash\{\overline{p}\}$  
  such that $k_{\overline{p}j}=\sum\limits_{i=1,i\neq \overline{p}}^p\beta_ik_{ij}$ 
  for all $j\in\{ 1,...,s\}$. 
The idea is to find a change of variable $w:=Qy$ 
that allows to handle more easily our system. 
We will achieve this in three steps starting from the simplest situation.
 \begin{enumerate}[Step 1.]
  \item Let us suppose first that 
  \begin{equation}\label{partiel:one force hyp 1}
  k_{11}=...=k_{1s}=0 \mr{~~~and~~~} 
  \mr{rank}~\left(\begin{array}{ccc}k_{21}&\cdots&k_{2s}\\
                \vdots&&\vdots\\
                k_{s+1,1}&\cdots&k_{s+1,s}
               \end{array}\right)=s.
  \end{equation}
We want to prove that, for some initial condition $y_0\in L^2(\Omega)^n$, a control  $u\in L^2(Q_T)$ 
cannot be found such that the solution to System (\ref{partiel:syst contant case}) satisfies 
$y_1(T)\equiv 0$ in $\Omega$. Let us consider the matrix $P\in\mc{L}(\mb{R}^n)$ defined by
  \begin{equation}\label{partiel:def P one force}
   P:=(B|...|A^{s-1}B|e_1|e_{s+2}|...|e_n).
  \end{equation}
Using the assumption (\ref{partiel:one force hyp 1}), $P$ is invertible. 
Thus, in view of Lemma \ref{partiel:lemme prelim one force2}, 
for a fixed 
control $u\in L^2(Q_T)$, 
$y$ is a solution to System (\ref{partiel:syst contant case}) 
if and only if $w:=P^{-1}y$ is a solution to System 
(\ref{partiel:syst contant case w lemme one force}) 
where  $C$, $D$ are given by 
 $C:=P^{-1}AP$ and $D:=P^{-1}B$.  
Using (\ref{partiel:comb lin A^s}) we remark that
 \begin{equation*}
  A(B|AB|...|BA^{s-1})=(B|AB|...|BA^{s-1})\left(\begin{array}{c}C_{11}\\0\end{array}\right),
 \end{equation*}
 with $C_{11}$ defined in (\ref{partiel:def C11 reciproque}).
 Then $C$ can be rewritten as
\begin{equation}\label{partiel:def C reciproque}
C =\left(\begin{array}{cc}
        C_{11}&C_{12}\\
        0&C_{22}
       \end{array}\right),
\end{equation}
where $ C_{12}\in \mc{L}(\mb{R}^{n-s},\mb{R}^s)$ and $ C_{22}\in \mc{L}(\mb{R}^{n-s})$. 
Furthermore
\begin{equation*}
D=P^{-1}B=P^{-1}Pe_1=e_1.
\end{equation*}
and with the Definition (\ref{partiel:def P one force}) of $P$ we get
 \begin{equation*}
y_1(T)=w_{s+1}(T)\mr{~in~}\Omega.
\end{equation*}
Thus we need only to prove that there exists $w_0\in L^2(\Omega)^n$ 
such that we cannot find  a control $u\in L^2(Q_T)$ 
with the corresponding solution $w$ to System  (\ref{partiel:syst contant case w lemme one force}) 
satisfying $w_{s+1}(T)\equiv0$  in $\Omega$.  
Therefore we apply Proposition \ref{partiel:property equivalence contr obs} and 
prove that the observability inequality \eqref{partiel:inegalite d observabilite lineraire2} 
can not be satisfied. 
More precisely, 
for all $w_0\in L^2(\Omega)^n$, there exists a control $u\in L^2(Q_T)$ 
such that the solution to System  (\ref{partiel:syst contant case w lemme one force}) satisfies $w_{s+1}(T)\equiv0$  in $\Omega$, 
if and only if there exists $C_{obs}>0$ such that 
for all $\varphi_{s+1}^0\in L^2(\Omega)$ the solution to the adjoint system
  \begin{equation}\label{partiel:syst constant dual rg=0}
\begin{array}{l}
  \left\{\begin{array}{rcll}
-\partial_t\varphi&=&\Delta \varphi
+\left(\begin{array}{cc}
        C_{11}^*&0\\
        C_{12}^*&C_{22}^*
       \end{array}\right)
\varphi&\mr{~in~}Q_T,\\
        \varphi&=&0&\mr{~on}~\Sigma_T,\\
	\varphi(T)&=&(0,...,0,\varphi^0_{s+1},0,...,0)^{\mr{t}}=e_{s+1}\varphi^0_{s+1} &~\mr{in}~\Omega
        \end{array}
\right.
\end{array}
\end{equation}
 satisfies the observability inequality 
 \begin{equation}\label{partiel:ine obs rg=0}
  \displaystyle\int_{\Omega}\varphi(0)^2\,dx
  \leqslant C_{obs}\displaystyle\int_{\omega\times(0,T)}\varphi_1^2\,dx\,dt.
 \end{equation}
But for all  $\varphi_{s+1}^0\not\equiv0$ in $\Omega$, 
the inequality (\ref{partiel:ine obs rg=0}) is not satisfied. 
Indeed,  we remark first that, since $\varphi_1(T)=...=\varphi_s(T)=0$ in $\Omega$, 
 we have $\varphi_1=...=\varphi_s=0$ in $Q_T$, so that 
 $\int_{\omega\times(0,T)}\varphi_1^2\,dx=0,$  
 while, if we choose $\varphi_{s+1}^0\not\equiv 0$ in $\Omega$, 
  using the results on backward uniqueness
for this type of parabolic system  (see \cite{Ghidaglia86}), 
we have clearly $(\varphi_{s+1}(0),...,\varphi_n(0))\not\equiv0$ in $\Omega$.

  \item Let us suppose only that $k_{11}=...=k_{1s}=0$. 
 Since $\mr{rank}~(B|...|A^{s-1}B)=s$, 
 there exists distinct $\lambda_1,...,\lambda_s\in\{2,...,n\}$ such that  
  \begin{equation*}
  \mr{rank}~\left(\begin{array}{ccc}k_{\lambda_{1},1}&\cdots&k_{\lambda_{1},s}\\
                \vdots&&\vdots\\
                k_{\lambda_{s},1}&\cdots&k_{\lambda_{s},s}
               \end{array}\right)=s.
  \end{equation*}
Let us consider the following matrix 
    \begin{equation*}
   Q:=(e_1|e_{\lambda_1}|...|e_{\lambda_{n-1}})^{\mr{t}},
  \end{equation*}
  where $\{\lambda_{s+1},...,\lambda_{n-1}\}=\{2,...,n\}\backslash\{\lambda_{1},...,\lambda_{s}\}$. 
Thus, for $P:=Q^{-1}$, again, 
for a fixed 
control $u\in L^2(Q_T)$, $y$ is a solution to System (\ref{partiel:syst contant case}) 
if and only if $w:=P^{-1}y$ is a solution to System 
(\ref{partiel:syst contant case w lemme one force}) 
where  $C$, $D$ are given by 
$C:=QAQ^{-1}$ and $D:=QB$. 
 Moreover, we have
 \begin{equation*}
  [C|D]=Q[A|B].
 \end{equation*}
If we note $(\tilde{k}_{ij})_{ij}:=[C|D]$, 
this implies $\tilde{k}_{11}=...=\tilde{k}_{1s}=0$ and
  \begin{equation*}
    \mr{rank}~\left(\begin{array}{ccc}\tilde{k}_{21}&\cdots&\tilde{k}_{2s}\\
                \vdots&&\vdots\\
                \tilde{k}_{s+1,1}&\cdots&\tilde{k}_{s+1,s}
               \end{array}\right)=
  \mr{rank}~\left(\begin{array}{ccc}k_{\lambda_{1}1}&\cdots&k_{\lambda_{1}s}\\
                \vdots&&\vdots\\
                k_{\lambda_{s},1}&\cdots&k_{\lambda_{s},s}
               \end{array}\right)=s.
  \end{equation*}
Proceeding as in Step 1 for $w$, there exists an initial condition $w_0$ such that for all control $u$ in $L^2(Q_T)$ the solution 
$w$ to System (\ref{partiel:syst contant case w lemme one force}) satisfies  $w_1(T) \not\equiv 0$ in $\Omega$. 
Thus, for the initial condition $y_0:=Q^{-1}w_0$, 
for all control $u$ in $L^2(Q_T)$, the solution $y$ to System (\ref{partiel:syst contant case}) satisfies
\begin{equation*}
y_1(T)=w_1(T) \not\equiv 0 \mr{~in~}\Omega.
\end{equation*}

  \item Without loss of generality, 
  we can  suppose that there exists $\beta_i$ for all $i\in\{ 2,...,p\}$ such that 
  $k_{1j}=\sum\limits_{i=2}^p\beta_i k_{ij}$ for all $j\in\{ 1,...,s\}$ 
  (otherwise a permutation of lines leads to this case). 
  Let us define  the following matrix 
    \begin{equation*}
   Q:=\left((e_1-\sum\limits_{i=2}^p\beta_ie_i)|e_{2}|...|e_{n}\right)^{\mr{t}}.
  \end{equation*}
  Thus, for $P:=Q^{-1}$, again, 
for a fixed initial condition $y_0\in L^2(\Omega)^n$ and a 
control $u\in L^2(Q_T)$, consider  System  (\ref{partiel:syst contant case w lemme one force})
with  $w:=P^{-1}y$, $y$ being a solution to System (\ref{partiel:syst contant case}).
%
  We remark that if we denote by $(\tilde{k}_{ij}):=[C|D]$, we have $\tilde{k}_{11}=...=\tilde{k}_{1s}=0$. 
  Applying step 2 to $w$, there exists an initial condition $w_0$ such that for all control $u$ in $L^2(Q_T)$ the solution 
$w$ to System (\ref{partiel:syst contant case w lemme one force}) satisfies 
\begin{equation}\label{partiel:wi(T)=0}
w_1(T) \not\equiv 0 \mr{~in~}\Omega.  
\end{equation}
Thus, with the definition of $Q$, for all control $u$ in $L^2(Q_T)$ the solution $y$ to System (\ref{partiel:syst contant case}) satisfies
\begin{equation*}
w_1(T)=y_1(T)-\sum\limits_{i=2}^p\beta_iy_i(T) \mr{~in~}\Omega.
\end{equation*}
Suppose $\Pi_py(T) \equiv 0$ in $\Omega$, then $w_1(T) \equiv 0$ in $\Omega$ and 
this contradicts (\ref{partiel:wi(T)=0}).  


 \end{enumerate}

  As a consequence of Proposition \ref{partiel:property equivalence contr obs}, 
 the $\Pi_p$-null controllability implies the $\Pi_p$-approximate controllability 
of  System  (\ref{partiel:syst contant case w lemme one force}). 
If now Condition (\ref{partiel:cond cas constant une force}) is not satisfied, 
as for the $\Pi_p$-null controllability, 
we can find a solution to System (\ref{partiel:syst constant dual rg=0}) 
such that $\phi_1\equiv 0$ in $\omega\times(0,T)$ and $\phi\not\equiv0$ in $Q_T$ and 
we conclude again with Proposition \ref{partiel:property equivalence contr obs}.

\end{proof}

\subsection{$m$-control forces}\label{partiel:section m forces}

\hspace*{4mm} In this subsection, we will suppose that $A\in \mc{L}(\mb{R}^n)$ 
and $B\in \mc{L}(\mb{R}^m,\mb{R}^n)$. We denote by 
$B=:(b^1|...|b^m)$. 
To prove Theorem \ref{partiel:theorem contro coupl constant}, we will use the following 
lemma which can be found in \cite{ammar2009generalization}. 

\begin{Lemme}\label{partiel:lemme constant base}
There exist $r\in \{ 1,...,s\}$ 
and sequences $\{l_j\}_{1\leqslant j\leqslant r}\subset\{1,...,m\}$ 
and  $\{s_j\}_{1\leqslant j\leqslant r}\subset\{1,...,n\}$ 
with $\sum_{j=1}^rs_j=s$, such that 
\begin{equation*}
 \mc{B}:=\bigcup\limits_{j=1}^r\{b^{l_j},Ab^{l_j},...,A^{s_j-1}b^{l_j}\}
\end{equation*}
is a basis of X. 
Moreover, for every $1\leqslant j\leqslant r$, 
there exist $\alpha_{k,s_j}^i\in \mb{R}$ 
for $1\leqslant i\leqslant j$ and $1\leqslant k\leqslant s_j$ such that
\begin{equation}\label{partiel:lemme constant base expression As_j}
 A^{s_j}b^{l_j}=
 \sum\limits_{i=1}^j\left( \alpha_{1,s_j}^ib^{l_i}+\alpha_{2,s_j}^iAb^{l_i}+
 ...+\alpha_{s_i,s_j}^iA^{s_i-1}b^{l_i}\right).
\end{equation}
\end{Lemme}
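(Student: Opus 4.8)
The plan is to construct the basis $\mc{B}$ by a greedy sweep through the columns $b^1,\dots,b^m$ of $B$ from left to right: at each column we extract the longest Krylov chain that is still independent of everything chosen so far. Set $W_0:=\{0\}$ and, for $1\leqslant i\leqslant m$, $W_i:=\mr{span}\{A^kb^l:\ k\geqslant 0,\ 1\leqslant l\leqslant i\}$. By the Cayley--Hamilton theorem, $A^k$ is a linear combination of $I,A,\dots,A^{n-1}$ for every $k\geqslant n$, so $W_m=X$; moreover $W_{i-1}\subseteq W_i$ and each $W_i$ is $A$-invariant. We may assume $B\neq 0$ (otherwise $X=\{0\}$ and there is nothing to prove), hence $s=\mr{rank}\,[A|B]\geqslant 1$.

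I would then proceed by induction on $i$, keeping the invariant that a basis of $W_i$ has been produced, made of chains $\{b^{l_j},Ab^{l_j},\dots,A^{s_j-1}b^{l_j}\}$ for $1\leqslant j\leqslant j_i$, with $\sum_{j\leqslant j_i}s_j=\mr{dim}\,W_i$ and with each vector $A^{s_j}b^{l_j}$ lying in the span of chains $1,\dots,j$, which is precisely an identity of the form \eqref{partiel:lemme constant base expression As_j}. The base case $i=0$ is trivial. For the step from $i-1$ to $i$: if $b^i\in W_{i-1}$ then, $W_{i-1}$ being $A$-invariant, $W_i=W_{i-1}$ and nothing is added. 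Otherwise let $\sigma$ be the smallest integer $\geqslant 1$ with $A^{\sigma}b^i\in U:=W_{i-1}+\mr{span}(b^i,Ab^i,\dots,A^{\sigma-1}b^i)$; such a $\sigma$ exists because the subspaces $W_{i-1}+\mr{span}(b^i,\dots,A^{k-1}b^i)$ are nondecreasing in $k$ and bounded by $\mb{R}^n$. The key point is that $U=W_i$ and that adjoining the chain $\{b^i,\dots,A^{\sigma-1}b^i\}$ to the basis of $W_{i-1}$ yields a basis of $W_i$. Indeed $U$ is $A$-invariant, since $AW_{i-1}\subseteq W_{i-1}$ and $A\,\mr{span}(b^i,\dots,A^{\sigma-1}b^i)\subseteq\mr{span}(b^i,\dots,A^{\sigma-1}b^i)+\mb{R}\,A^{\sigma}b^i\subseteq U$, and $U$ contains $b^i$, so $U\supseteq W_i$, while $U\subseteq W_i$ is obvious; and $b^i,\dots,A^{\sigma-1}b^i$ are linearly independent modulo $W_{i-1}$ by minimality of $\sigma$, because a nontrivial relation $\sum_{k\leqslant K}c_kA^kb^i\in W_{i-1}$ with $c_K\neq 0$ would force $A^Kb^i\in W_{i-1}+\mr{span}(b^i,\dots,A^{K-1}b^i)$ with $K<\sigma$, a contradiction.

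It remains to record that, expressing $A^{\sigma}b^i\in U=W_{i-1}+\mr{span}(b^i,\dots,A^{\sigma-1}b^i)$ in the basis of $W_i$ just obtained, one gets exactly the relation \eqref{partiel:lemme constant base expression As_j} for the new chain (no later chain intervenes, since the Krylov space of $b^i$ sits inside $W_i$), so the invariant is preserved. After processing $i=m$ we have a basis of $W_m=X$ of the desired form; put $r:=j_m$. Then $\sum_{j=1}^r s_j=\mr{dim}\,X=s$, whence $r\leqslant s$, and $r\geqslant 1$ as $s\geqslant 1$; finally each $s_j\leqslant s\leqslant n$, so $s_j\in\{1,\dots,n\}$. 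The only genuine difficulty is the bookkeeping that keeps the chains ordered so the expansion of $A^{s_j}b^{l_j}$ involves only $b^{l_1},\dots,b^{l_j}$; this is exactly what the left-to-right sweep, combined with the $A$-invariance of $U$, delivers.
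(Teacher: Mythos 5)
Your construction is correct and complete: the left-to-right greedy extraction of a maximal Krylov chain from each column $b^i$ not already contained in $W_{i-1}$, combined with the $A$-invariance of the spans $W_{i}$ and of $U$, gives both the basis property and the triangular relations \eqref{partiel:lemme constant base expression As_j}, and the final bookkeeping ($\sum_j s_j=s$, $1\leqslant r\leqslant s$, $s_j\leqslant n$) is right. Note that the paper does not prove this lemma itself but quotes it from \cite{ammar2009generalization}; your inductive chain-by-chain argument is essentially the standard proof used there, so nothing needs to be added.
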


\begin{proof}[Proof of Theorem \ref{partiel:theorem contro coupl constant}]
Consider the basis $\mc{B}$ of $X$ given by Lemma \ref{partiel:lemme constant base}. 
Note that 
\begin{equation*}
\mr{rank}~\Pi_p[A|B]=\mr{dim}~
 \Pi_p[A|B](\mb{R}^n)\leqslant   \mr{rank}~[A|B]=s. 
\end{equation*}
If  $M$ is the matrix whose columns are the elements of $\mc{B}$, i.e.
 \begin{equation*}
  M=(m_{ij})_{ij}:=\left(b^{l_1}|Ab^{l_1}|...|A^{s_1-1}b^{l_1}|...
  |b^{l_r}|Ab^{l_r}|...|A^{s_r-1}b^{l_r}\right),
 \end{equation*}
we can remark that 
\begin{equation}\label{partiel:rank pi_pM m forces}
 \mr{rank}~\Pi_pM=\mr{rank}~\Pi_p[A|B].
\end{equation}
Indeed, relationship (\ref{partiel:lemme constant base expression As_j}) yields
\begin{equation*}
 \Pi_pA^{s_j}b^{l_j}=
 \sum\limits_{i=1}^j\left( \alpha_{1,s_j}^i\Pi_pb^{l_i}+\alpha_{2,s_j}^i\Pi_pAb^{l_i}+
 ...+\alpha_{s_i,s_j}^i\Pi_pA^{s_i-1}b^{l_i}\right).
\end{equation*}
We first prove in (a) that condition (\ref{partiel:cond cas constant une force}) is sufficient, and then in (b) that this condition is necessary. 

\hspace*{2mm}
(a) Sufficiency part: Let us suppose first that (\ref{partiel:cond cas constant une force}) is satisfied. Let be $y_0\in L^2(\Omega)^n$. 
We will prove that we need only $r$ forces to control System 
(\ref{partiel:syst contant case}). More precisely, we will study the 
$\Pi_p$-null controllability of the system 
\begin{equation}\label{partiel:syst contant case r force}
\begin{array}{l}
  \left\{\begin{array}{ll}
   \partial_t y=\Delta y+Ay+\tilde{B}
   1_{\omega}v&\mr{~in~}Q_T,\\
        y=0&\mr{~on}~\Sigma_T,\\
	y(0)=y_0 &~\mr{in}~\Omega,
        \end{array}
\right.
\end{array}
\end{equation}
where 
$\tilde{B}=(b^{l_1}|b^{l_2}|\cdots|b^{l_r})
\in \mc{L}(\mb{R}^r,\mb{R}^n)$.
Using (\ref{partiel:cond cas constant une force}) and (\ref{partiel:rank pi_pM m forces}), we have
\begin{equation}\label{partiel:rank P11=p p=s m force}
 \mr{rank}~\Pi_p(b^{l_1}|Ab^{l_1}|...|A^{s_1-1}b^{l_1}|...
  |b^{l_r}|Ab^{l_r}|...|A^{s_r-1}b^{l_r})=p. 
\end{equation}

\begin{enumerate}[\textit{Case} 1]
 \item : $p=s$. 
 As in the case of one control force, we want to apply a change of variable 
 $P$ to the solution $y$ to System (\ref{partiel:syst contant case r force}). 
 Let us define for all $t\in [0,T]$ the following matrix
  \begin{equation}\label{partiel:def P m force p=s}
  P(t):=(b^{l_1}|Ab^{l_1}|...|A^{s_1-1}b^{l_1}|...
  |b^{l_r}|Ab^{l_r}|...|A^{s_r-1}b^{l_r}|P_{s+1}(t)|...|P_n(t))\in \mc{L}(\mb{R}^n),
 \end{equation}
 where for all $l\in \{ s+1,...,n\}$, $P_l$ is solution in $\mc{C}^1([0,T])^n$ to the 
 system of ordinary differential equations
 \begin{equation}\label{partiel:system K_2 p=s m foces}
  \left\{\begin{array}{l}
          \partial_tP_l(t)=AP_l(t)\mr{~in~}[0,T],\\
          P_l(T)=e_l.
         \end{array}
\right.
 \end{equation}
Using (\ref{partiel:def P m force p=s}) and (\ref{partiel:system K_2 p=s m foces}) we have 
\begin{equation}\label{partiel:definition K(T) p=s m foces}
 P(T)=\left(\begin{array}{cc}
        P_{11}&0\\
        P_{21}&I_{n-s}
            \end{array}
\right),
\end{equation}
where $P_{11}:=\Pi_s(b^{l_1}|Ab^{l_1}|...|A^{s_1-1}b^{l_1}|...
  |b^{l_r}|Ab^{l_r}|...|A^{s_r-1}b^{l_r})\in \mc{L}(\mb{R}^s)$ and $P_{21}\in\mc{L}(\mb{R}^{n-s},\mb{R}^s)$. 
From (\ref{partiel:rank P11=p p=s m force}), $P_{11}$ and thus $P(T)$ are invertible. 
Furthermore, since $P$ is continuous on $[0,T]$, 
there exists a $T^*\in [0,T)$  such that 
$P(t)$ is invertible for all $t\in[T^*,T]$. 

\hspace*{4mm} We suppose first that $T^*=0$. Since $P$ is invertible and continuous on  $[0,T]$, 
for a fixed 
control $v\in L^2(Q_T)^r$, 
$y$ is the solution to System (\ref{partiel:syst contant case r force}) 
if and only if $w:=P(t)^{-1}y$ is the solution to System 
(\ref{partiel:syst contant case w lemme one force}) 
where  $C$, $D$ are given by 
\begin{equation*}
 C(t):=-P^{-1}(t)\partial_t P(t)+P^{-1}(t)AP(t)\mr{~~~and~~~} D(t):=P^{-1}(t)\tilde{B},
\end{equation*}
for all $t\in[0,T]$. 
Using (\ref{partiel:lemme constant base expression As_j}) 
and (\ref{partiel:system K_2 p=s m foces}), we obtain
\begin{equation}\label{partiel:equation K p<s}
 \left\{\begin{array}{rcll}
      -\partial_tP(t)+AP(t)&=&(Ab^{l_1}|A^2b^{l_1}|...|A^{s_1}b^{l_1}|...
  |Ab^{l_r}|A^2b^{l_r}|...|A^{s_r}b^{l_r}|0|...|0),\\&=&P(t)\left(
  \begin{array}{cc}
   \tilde{C}_{11}&0\\
   0&0
  \end{array}
  \right)&\mr{~in~}[0,T],\\
      P(t)e_{S_i}&=&b^{l_i}&\mr{~in~}[0,T],
        \end{array}
\right.
\end{equation} 
where $S_i=1+\sum_{j=1}^{i-1}s_j$ for $i\in\{1,...,r\}$, 
  \begin{equation}\label{partiel:def C11 reciproque m force}
  \tilde{C}_{11}:=\left(
  \begin{array}{cccc}
   C_{11}&C_{12}&\cdots&C_{1r}\\
   0&C_{22}&\cdots&C_{2r}\\
    \vdots&\vdots&\ddots&\vdots\\
   0&0&\cdots&C_{rr}\\
  \end{array}
  \right)\in\mc{L}(\mb{R}^s)
  \end{equation}
and for $1\leqslant i\leqslant j\leqslant r$
the matrices 
 $C_{ij}\in\mc{L}(\mb{R}^{s_j},\mb{R}^{s_i})$ 
 are given by
 \begin{equation}\label{partiel:definition Cij}  
 C_{ii}:=\left(
  \begin{array}{ccccc}
  0&0&0&\ldots&\alpha_{1,s_i}^i\\
  1&0&0&\ldots&\alpha_{2,s_i}^i\\
  0&1&0&\ldots&\alpha_{3,s_i}^i\\
  \vdots&\vdots&\ddots&\ddots&\vdots\\
  0&0&\ldots&1&\alpha_{s_i,s_i}^i
  \end{array}
  \right) 
   ~~\mr{~~and~}
C_{ij}:=\left(
  \begin{array}{ccccc}
  0&0&0&\ldots&\alpha_{1,s_j}^i\\
  0&0&0&\ldots&\alpha_{2,s_j}^i\\
  0&0&0&\ldots&\alpha_{3,s_j}^i\\
  \vdots&\vdots&\ddots&\ddots&\vdots\\
  0&0&\ldots&0&\alpha_{s_i,s_j}^i
  \end{array}
  \right)~\mr{for}~j>i.
\end{equation}
Then 
\begin{equation}\label{partiel:def C p=s mforces}
 C(t)=\left(
  \begin{array}{cc}
   \tilde{C}_{11}&0\\
   0&0
  \end{array}
  \right)\mr{~and~}D(t)=(e_{S_1}|...|e_{S_r}).
\end{equation}
Using Theorem \ref{partiel:theo cascade}, there exists 
$v\in L^2(Q_T)^r$ such that the solution to System (\ref{partiel:syst contant case w lemme one force}) 
satisfies  $w_1(T)=...=w_s(T)\equiv0$ in $\Omega$. Moreover, using (\ref{partiel:definition K(T) p=s m foces}), 
we have 
 \begin{equation*}
\Pi_sy(T)=(y_{1}(T),...,y_{s}(T))
=P_{11}(w_1(T),...,w_s(T))\equiv0\mr{~in~}\Omega. 
\end{equation*}
 \hspace*{4mm} If now $T^*\neq 0$, we conclude as in the proof of 
 Theorem \ref{partiel:theorem contro coupl constant} with one force (see $\S$ \ref{partiel:section one force}).
 
 \item : $p<s$.
The proof is a direct adaptation of 
  the proof of Theorem \ref{partiel:theorem contro coupl constant} with one force, it is possible 
to find a change of variable in order to get back to the situation of Case 1 
(see $\S$ \ref{partiel:section one force}).
\end{enumerate}

\hspace*{2mm}
(b) Necessary part: If  (\ref{partiel:cond cas constant une force})  is not satisfied, 
 there exist $\overline{p}\in\{ 1,...,p\}$ and, for all $i\in\{1,...,p\}\backslash\{\overline{p}\}$, scalars $\beta_i$   
  such that $m_{\overline{p}j}=\sum\limits_{i=1,i\neq \overline{p}}^p\beta_im_{ij}$ 
  for all $j\in\{ 1,...,s\}$. 
 As previously, without loss of generality, we can suppose that 
  \begin{equation}\label{partiel:hyp r forces}
  m_{11}=...=m_{1s}=0 \mr{~~~and~~~} 
  \mr{rank}~\left(\begin{array}{ccc}m_{21}&\cdots&m_{2s}\\
                \vdots&&\vdots\\
                m_{s+1,1}&\cdots&m_{s+1,s}
               \end{array}\right)=s
  \end{equation}
(otherwise a permutation of lines leads to this case). Let us consider the matrix $P$ defined by
  \begin{equation}\label{partiel:def P one force rec}
   P:=(b^{l_1}|Ab^{l_1}|...|A^{s_1-1}b^{l_1}|...
  |b^{l_r}|Ab^{l_r}|...|A^{s_r-1}b^{l_r}|e_1|e_{s+2}|...|e_n).
  \end{equation}
Relationship ensures (\ref{partiel:hyp r forces}) that $P$ is invertible. 
Thus, again, 
for a fixed 
control $u\in L^2(Q_T)^m$, 
$y$ is the solution to System (\ref{partiel:syst contant case}) 
if and only if $w:=P^{-1}y$ is the solution to System 
(\ref{partiel:syst contant case w lemme one force}) 
where  $C$, $D$ are given by 
 $C:=P^{-1}AP$ and $D:=P^{-1}B$. 
Using (\ref{partiel:lemme constant base expression As_j}), we remark that
 \begin{equation*}\begin{array}{c}
 A(b^{l_1}|Ab^{l_1}|...|A^{s_1-1}b^{l_1}|...
  |b^{l_r}|Ab^{l_r}|...|A^{s_r-1}b^{l_r})\\
  =(Ab^{l_1}|A^2b^{l_1}|...|A^{s_1}b^{l_1}|...
  |Ab^{l_r}|A^2b^{l_r}|...|A^{s_r}b^{l_r})
 =P \left(\begin{array}{c}\tilde{C}_{11}\\0\end{array}\right),
 \end{array}\end{equation*}
where $\tilde{C}_{11}$ is defined in (\ref{partiel:def C11 reciproque m force}).
 Then $C$ can be written as
\begin{equation}\label{partiel:def C reciproque m forces}
C =\left(\begin{array}{cc}
        \tilde{C}_{11}&\tilde{C}_{12}\\
        0&\tilde{C}_{22}
       \end{array}\right),
\end{equation}
where $ \tilde{C}_{12}\in \mc{L}(\mb{R}^s,\mb{R}^{n-s})$ 
and $ \tilde{C}_{22}\in \mc{L}(\mb{R}^{n-s})$. 
Furthermore, the matrix $D$ can be written
\begin{equation*}
D=\left(\begin{array}{c}
         D_1\\
         0
        \end{array}
\right),
\end{equation*}
where $D_1\in\mc{L}(\mb{R}^m,\mb{R}^s)$. Using  (\ref{partiel:def P one force rec}), we get
 \begin{equation*}
y_1(T)=w_{s+1}(T)\mr{~in~}\Omega.
\end{equation*}
Thus, we need only to prove that there exists $w_0\in L^2(\Omega)^n$ 
such that we cannot find  a control $u\in L^2(Q_T)^m$ 
with the corresponding solution $w$ to System  (\ref{partiel:syst contant case w lemme one force}) 
satisfying $w_{s+1}(T)\equiv0$  in $\Omega$.  
Therefore we apply Proposition \ref{partiel:property equivalence contr obs} and 
prove that the observability inequality \eqref{partiel:inegalite d observabilite lineraire2} 
can not be satisfied. 
More precisely, 
for all $w_0\in L^2(\Omega)^n$, there exists a control $u\in L^2(Q_T)^m$ 
such that the solution $w$ to System  (\ref{partiel:syst contant case w lemme one force}) 
satisfies $w_{s+1}(T)\equiv0$  in $\Omega$, 
if and only if there exists $C_{obs}>0$ such that 
for all $\varphi_{s+1}^0\in L^2(\Omega)$ the solution to the adjoint system
  \begin{equation}\label{partiel:syst constant dual rg=0 m forces}
\begin{array}{l}
  \left\{\begin{array}{rcll}
-\partial_t\varphi&=&\Delta \varphi
+\left(\begin{array}{cc}
        \tilde{C}_{11}^*&0\\
        \tilde{C}_{12}^*&\tilde{C}_{22}^*
       \end{array}\right)
\varphi&\mr{~in~}Q_T,\\
        \varphi&=&0&\mr{~on}~\Sigma_T,\\
	\varphi(T)&=&(0,...,0,\varphi^0_{s+1},0,...,0)^{\mr{t}}=e_{s+1}\varphi^0_{s+1} &~\mr{in}~\Omega
        \end{array}
\right.
\end{array}
\end{equation}
 satisfies the observability inequality 
 \begin{equation}\label{partiel:ine obs rg=0 m forces}
    \displaystyle\int_{\Omega}\varphi(0)^2\,dx
  \leqslant C_{obs}\displaystyle\int_{\omega\times(0,T)}(D_1^*(\varphi_1,...,\varphi_s)^{\mr{t}})^2\,dx\,dt.
 \end{equation}
But for all  $\varphi_{s+1}^0\not\equiv0$ in $\Omega$, 
the inequality (\ref{partiel:ine obs rg=0 m forces}) is not satisfied. 
Indeed,  we remark first that, since $\varphi_1(T)=...=\varphi_s(T)=0$ in $\Omega$, 
 we have $\varphi_1=...=\varphi_s=0$ in $Q_T$. 
 Furthermore, if we choose $\varphi_{s+1}^0\not\equiv 0$ in $\Omega$, 
 as previously, 
we get $(\varphi_{s+1}(0),...,\varphi_n(0))\not\equiv0$ in $\Omega$.

  We recall that, as a consequence of Proposition \ref{partiel:property equivalence contr obs}, 
 the $\Pi_p$-null controllability implies the $\Pi_p$-approximate controllability 
of  System  (\ref{partiel:syst contant case r force}). 
If  Condition (\ref{partiel:cond cas constant une force}) is not satisfied, 
as for the $\Pi_p$-null controllability, 
we can find a solution to System (\ref{partiel:syst constant dual rg=0 m forces}) 
such that $D_1^*(\phi_1,...,\phi_s)^{\mr{t}}\equiv 0$ in $\omega\times(0,T)$ 
and $\phi\not\equiv0$ in $Q_T$ and 
we conclude again with Proposition \ref{partiel:property equivalence contr obs}.

\end{proof}

\section{Partial null controllability with time dependent matrices}\label{partiel:section temps}

\hspace*{4mm} We recall  that $[A|B](\cdot)=(B_0(\cdot)|...|B_{n-1}(\cdot))$ (see (\ref{partiel:def B_i})). 
Since 
$A(t)\in \mc{C}^{n-1}([0,T];\mc{L}(\mb{R}^n))$ 
and $ B(t)\in\mc{C}^{n}([0,T];\mc{L}(\mb{R}^m;\mb{R}^n))$, 
we remark that the matrix $[A|B]$ is well defined and is an element of 
$\mc{C}^1([0,T],\mc{L}(\mb{R}^{mn},\mb{R}^n)$. 
We will use the notation $B_i=:(b_1^i|...|b_m^i)$ for all $i\in  \{0,...,n-1\}$. 
To prove Theorem \ref{partiel:theo temps}, we will use the following 
lemma of \cite{gonzalez2010controllability}

\begin{Lemme}\label{partiel:lemme expression base temps}
 Assume that $\mr{max}\{\mr{rank}~[A|B](t):t\in [0,T]\}=s\leqslant n$. 
 Then there exist 
 $T_0,~T_1\in[0,T]$, with $T_0<T_1$, $r\in \{1,...,m\}$ and 
 sequences $(s_j)_{1\leqslant j\leqslant r}\subset \{1,...,n\}$, 
 with $\sum_{i=1}^rs_j=s$, 
 and $(l_j)_{1\leqslant j\leqslant r}\subset \{1,...,m\}$ 
 such that, for every $t\in [T_0,T_1]$, the set 
 \begin{equation}\label{partiel:definition de la base B}
  \mc{B}(t)=
  \bigcup_{j=1}^r\{b_0^{l_j}(t),b_1^{l_j}(t),...,b_{s_j-1}^{l_j}(t)\},
 \end{equation}
is linearly independent, spans the columns of $[A|B](t)$ and satisfies
\begin{equation}\label{partiel:combination lineaire b_i}
 b_{s_j}^{l_j}(t)=
 \sum\limits_{k=1}^j\left(\theta_{s_j,0}^{l_j,l_k}(t)b_0^{l_k}(t)+
\theta_{s_j,1}^{l_j,l_k}(t)b_1^{l_k}(t)+...+
\theta_{s_j,s_k-1}^{l_j,l_k}(t)b_{s_k-1}^{l_k}(t)\right),
 \end{equation}
for every $t\in [T_0,T_1]$ and $j\in\{ 1,..., r\}$, 
where 
\begin{equation*}
 \theta_{s_j,0}^{l_j,l_k}(t), ~
\theta_{s_j,1}^{l_j,l_k}(t),~...,
\theta_{s_j,s_k-1}^{l_j,l_k}(t)\in\mc{C}^1([T_0,T_1]).
\end{equation*}
\end{Lemme}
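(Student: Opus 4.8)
The plan is to combine the lower semicontinuity of the rank of a matrix with continuous entries with the greedy ``staircase'' construction already carried out in the constant case (Lemma \ref{partiel:lemme constant base}), the essential new feature being that every linear dependence among the columns of $[A|B](\cdot)$ must now be required to hold on a whole subinterval, not merely at a point; the coefficients $\theta$ are then produced by Cramer's rule, which simultaneously gives their $\mc{C}^1$ regularity.

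The first step is to pass to a subinterval of constant rank. Since $[A|B](\cdot)\in\mc{C}^1([0,T];\mc{L}(\mb{R}^{mn},\mb{R}^n))$, the map $t\mapsto\mr{rank}~[A|B](t)$ is lower semicontinuous, so $\{t\in[0,T]:\mr{rank}~[A|B](t)=s\}$ is open and, by the definition of $s$, nonempty; picking $t^*$ in it and a small enough closed interval $I\ni t^*$ gives $\mr{rank}~[A|B](t)\geqslant s$, hence $=s$, for every $t\in I$. During the argument $I$ is shrunk finitely many times, always kept nondegenerate; the final $I$ is $[T_0,T_1]$.

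The second step is the construction of the chains. One imitates the greedy scanning of Lemma \ref{partiel:lemme constant base} — run through the control indices $l=1,\dots,m$ and, within each $l$, the levels $k=0,1,\dots$, keeping $b_k^{l}(\cdot)$ as long as it is independent of the vectors already kept and moving to $l+1$ at the first dependence — but one only accepts a dependence when, after possibly shrinking $I$, it holds as a $\mc{C}^1$ identity on all of $I$. This yields $l_1<\dots<l_r$, lengths $s_1,\dots,s_r$, and $\mc{B}(\cdot)=\bigcup_j\{b_0^{l_j}(\cdot),\dots,b_{s_j-1}^{l_j}(\cdot)\}$; a suitable square subdeterminant formed from these vectors is $\mc{C}^1$ and nonzero at $t^*$, hence (after one more shrink) on $I$, so $\mc{B}(t)$ is linearly independent for every $t\in I$, and Cramer's rule in this basis turns each accepted dependence into an identity of exactly the triangular shape \eqref{partiel:combination lineaire b_i}, with all $\theta$'s in $\mc{C}^1(I)$. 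Finally, differentiating these identities and using $b_{i+1}^{l}=Ab_i^{l}-\partial_tb_i^{l}$, a short induction shows that no column $b_i^{l}(t)$ of $[A|B](t)$ can leave $\mr{span}~\mc{B}(t)$; thus $\mc{B}(t)$ spans the $s$-dimensional column space of $[A|B](t)$, which forces $\sum_{j=1}^r s_j=s$.

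The main obstacle is exactly this passage from pointwise to functional dependencies. In the autonomous case (Lemma \ref{partiel:lemme one force sous matrice}) a dependence among the Kalman columns at one point is inherited at every point simply by multiplying by $A$; here the recursion $B_{i+1}=AB_i-\partial_tB_i$ involves a time derivative, so a dependence valid at $t^*$ may fail nearby, and one is compelled to discard all relations that do not persist on a subinterval — which is precisely why the conclusion is confined to some $[T_0,T_1]$ that may be strictly smaller than $[0,T]$. The remaining difficulty is bookkeeping: keeping every $\theta$ of class $\mc{C}^1$ across the repeated differentiations is what forces the assumptions $A\in\mc{C}^{n-1}$, $B\in\mc{C}^{n}$, ensuring that all the $B_i$ are $\mc{C}^1$ and that the product rule may be applied to the Cramer coefficients as many times as required.
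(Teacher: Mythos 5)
The paper never proves this lemma: it is quoted verbatim from \cite{gonzalez2010controllability}, so there is no internal argument to compare yours with. Judged on its own, your proof follows what is indeed the natural (and, in substance, the standard) route: a constant-rank subinterval obtained from lower semicontinuity of $t\mapsto\mathrm{rank}\,[A|B](t)$, a greedy chain selection in which a dependence is only ``accepted'' when it holds identically on a (possibly shrunken) subinterval, and Cramer's rule on a nonvanishing minor to get the $\mathcal{C}^1$ coefficients and the triangular shape \eqref{partiel:combination lineaire b_i}. The dichotomy you rely on is sound: either the candidate column is independent of the kept ones at some point of the current interval (shrink around that point and keep it), or it lies in their span at every point, and then the Cramer coefficients, computed from a minor bounded away from zero on a closed subinterval, give the identity on the whole subinterval.

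The one step you should rewrite is the final spanning argument. As phrased (``the product rule may be applied to the Cramer coefficients as many times as required''), it suggests differentiating the same coefficient functions repeatedly; that regularity is not available -- the $\theta$'s are in general only $\mathcal{C}^1$, since the entries of $[A|B]$ are only $\mathcal{C}^1$ -- and, fortunately, it is not needed. Run the induction so that only one derivative is ever taken at each level: if $w:=b^{l}_i$ with $i\leqslant n-2$ satisfies $w(t)\in V(t):=\mathrm{span}\,\mathcal{B}(t)$ for every $t$ of the interval, expand $w=\sum_\mu c_\mu v_\mu$ afresh by Cramer (so $c_\mu\in\mathcal{C}^1$, because $w$ and the $v_\mu$ are columns of $[A|B]\in\mathcal{C}^1$ and the selected minor does not vanish), and write $b^{l}_{i+1}=Aw-\partial_t w=\sum_\mu c_\mu\left(Av_\mu-\partial_t v_\mu\right)-\sum_\mu c'_\mu v_\mu$; each $Av_\mu-\partial_t v_\mu$ is either again a kept column or the terminal column of its chain, which belongs to $V(t)$ by an accepted relation, so $b^{l}_{i+1}(t)\in V(t)$ pointwise and the induction proceeds without ever differentiating a $\theta$ twice. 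Since $\mathrm{rank}\,[A|B](t)=s$ on the whole subinterval, the membership of every column in $V(t)$ forces $\sum_j s_j=s$ and the spanning property, as you claim. (A marginal caveat concerning the statement rather than your argument: in the extreme case $r=1$, $s_1=n$, the relation \eqref{partiel:combination lineaire b_i} involves $b^{l_1}_n$, a column of $B_n$, which under the stated regularity of $A$ and $B$ is only continuous, so Cramer delivers continuous rather than $\mathcal{C}^1$ coefficients there.)
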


With exactly the same argument for the proof of the previous lemma, 
we can obtain the
\begin{Lemme}
 If  $\mr{rank~}[A|B](T)=s$, 
 then the conclusions of Lemma \ref{partiel:lemme expression base temps} hold true  
 with $T_1=T$.
\end{Lemme}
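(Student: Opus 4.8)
The plan is to reuse the proof of Lemma \ref{partiel:lemme expression base temps} almost verbatim, the only change being the choice of the base point at which the chains are selected. Recall that in Lemma \ref{partiel:lemme expression base temps} the integer $s$ is the maximal value of $\mr{rank}~[A|B](t)$ over $[0,T]$; the extra hypothesis $\mr{rank}~[A|B](T)=s$ says precisely that this maximal rank is attained at the right endpoint $t=T$.

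First I would recall how the original proof produces the data $(T_0,T_1,r,(s_j)_j,(l_j)_j)$. One fixes a point $t^{\ast}$ with $\mr{rank}~[A|B](t^{\ast})=s$ and, by a greedy selection along the chains $b_0^{l}(t^{\ast}),b_1^{l}(t^{\ast}),\dots$ with $l$ ranging over $\{1,\dots,m\}$, extracts $r$ chains, $1\leqslant r\leqslant m$, described by indices $(l_j)$ and lengths $(s_j)$ with $\sum_j s_j=s$, in such a way that the set $\mc{B}(t^{\ast})$ of \eqref{partiel:definition de la base B} is a basis of the column space of $[A|B](t^{\ast})$ and each $b_{s_j}^{l_j}(t^{\ast})$ is a linear combination of the chain vectors indexed by $(l_1,\dots,l_j)$ as in \eqref{partiel:combination lineaire b_i}. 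Since the $s\times s$ minor $\delta(\cdot)$ of $[A|B](\cdot)$ built from the rows witnessing that $\mc{B}(t^{\ast})$ has rank $s$ is a $\mc{C}^1$ function of $t$ with $\delta(t^{\ast})\neq 0$, it stays nonzero on a relative neighbourhood of $t^{\ast}$ in $[0,T]$; on that neighbourhood $\mc{B}(t)$ is linearly independent, and since $s$ is the global maximum of the rank, $\mc{B}(t)$ must in fact span the columns of $[A|B](t)$ there. Solving, for each $j$, the linear system expressing $b_{s_j}^{l_j}(t)$ in the basis $\mc{B}(t)$ by Cramer's rule then yields coefficients $\theta(t)$ that are ratios of $\mc{C}^1$ determinants with nonvanishing denominator, hence are $\mc{C}^1$.

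Now I would simply take $t^{\ast}=T$, which is legitimate by the hypothesis $\mr{rank}~[A|B](T)=s$. A relative neighbourhood of $T$ in $[0,T]$ has the form $[T_0,T]$ with $T_0<T$, so the construction above gives exactly the conclusions of Lemma \ref{partiel:lemme expression base temps} on $[T_0,T_1]$ with $T_1=T$.

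The main (and only, rather minor) point to check is that the continuity-and-nonvanishing argument is insensitive to whether $t^{\ast}$ is interior to $[0,T]$ or is its right endpoint; this is immediate, since a continuous function that is nonzero at $T$ is nonzero on some $[T-\delta,T]$, and the Cramer and greedy-selection bookkeeping is carried out exactly as in \cite{gonzalez2010controllability} after restricting to $[T_0,T]$, an interval on which the regularity hypotheses $A\in\mc{C}^{n-1}([0,T];\mc{L}(\mb{R}^n))$ and $B\in\mc{C}^{n}([0,T];\mc{L}(\mb{R}^m;\mb{R}^n))$ continue to hold.
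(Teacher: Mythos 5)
Your proposal is correct and is essentially the paper's own argument: the paper simply states that the result follows ``with exactly the same argument'' as Lemma \ref{partiel:lemme expression base temps}, i.e.\ one runs the selection of \cite{gonzalez2010controllability} at the base point $t^{\ast}=T$ (legitimate since the maximal rank $s$ is attained there) and uses continuity of a nonvanishing $s\times s$ minor to get a one-sided interval $[T_0,T]$, with Cramer's rule giving the $\mc{C}^1$ coefficients. Your only addition is to spell out these details, including the (correct) observation that nothing in the argument requires the base point to be interior to $[0,T]$.
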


\begin{proof}[Proof of Theorem \ref{partiel:theo temps}]
Let $y_0\in L^2(\Omega)^n$ 
 and $s$ be the rank of the matrix $[A|B](T)$. As in the proof 
 of the controllability by one force with constant matrices, 
 let $X$ being the linear space 
 spanned by the columns of the matrix $[A|B](T)$. We consider 
$\mc{B}=\mc{B}(t)$ the basis of $X$ defined in (\ref{partiel:definition de la base B}).

As in the constant case, we will prove that 
we need only $r$ forces to control System 
(\ref{partiel:syst para lin equa contr intro}) that is we study the partial 
null controllability of System (\ref{partiel:syst contant case r force}) with the coupling matrix 
$A(t)\in \mc{C}^{n-1}([0,T];\mc{L}(\mb{R}^n))$ and 
the control matrix $\tilde{B}(t)=(B_{l_1}(t)|B_{l_2}(t)|\cdots|B_{l_r}(t))
\in \mc{C}^{n}([0,T];\mc{L}(\mb{R}^r,\mb{R}^n))$.
If we define $M$ as the matrix whose columns are the elements of 
$\mc{B}(t)$, i.e. for all $t\in[0,T]$
 \begin{equation*}
  M(t)=(m_{ij}(t))_{1\leqslant i\leqslant n, 1\leqslant j\leqslant s}
  :=\left(b^{l_1}_0(t)|b^{l_1}_1(t)|...|b^{l_1}_{s_1-1}(t)|...
  |b^{l_r}_0(t)|b^{l_r}_1(t)|...|b^{l_r}_{s_r-1}(t)\right),
 \end{equation*}
we can remark 
that 
\begin{equation}\label{partiel:rank P11=p p=s temps}
\mr{rank}~\Pi_pM(T)=\mr{rank}~\Pi_p[A|B](T)=p.  
\end{equation}
Indeed, using (\ref{partiel:combination lineaire b_i}), 
\begin{equation*}
\Pi_p b_{s_j}^{l_j}(t)=
 \sum\limits_{k=1}^j\left(\theta_{s_j,0}^{l_j,l_k}(t)\Pi_pb_0^{l_k}(t)+
\theta_{s_j,1}^{l_j,l_k}(t)\Pi_pb_1^{l_k}(t)+...+
\theta_{s_j,s_k-1}^{l_j,l_k}(t)\Pi_pb_{s_k-1}^{l_k}(t)\right).
 \end{equation*}
%

\begin{enumerate}[\textit{Case} 1]
 \item : $p=s$. 
 As in the constant case, we want to apply a change of variable $P$ to the solution $y$ to System (\ref{partiel:syst contant case r force}). 
 Let us define for all $t\in [0,T]$ the following matrix
  \begin{equation}\label{partiel:def P temps p=s}
  P(t):=(b^{l_1}_0(t)|b^{l_1}_1(t)|...|b^{l_1}_{s_1-1}(t)|...
  |b^{l_r}_0(t)|b^{l_r}_1(t)|...|b^{l_r}_{s_r-1}(t)|P_{s+1}(t)|...|P_n(t))\in \mc{L}(\mb{R}^n),
 \end{equation}
 where for all $i\in \{ s+1,...,n\}$, $P_l$ is solution in $\mc{C}^1([0,T])^n$ to the 
 system of ordinary differential equations
 \begin{equation}\label{partiel:system K_2 p=s temps}
  \left\{\begin{array}{l}
          \partial_tP_l(t)=AP_l(t)\mr{~in~}[0,T],\\
          P_l(T)=e_l.
         \end{array}
\right.
 \end{equation}
Using (\ref{partiel:def P temps p=s}) and (\ref{partiel:system K_2 p=s temps}), $P(T)$ can be rewritten
\begin{equation}\label{partiel:definition K(T) p=s temps}
 P(T)=\left(\begin{array}{cc}
        P_{11}&0\\
        P_{21}&I_{n-s}
            \end{array}
\right),
\end{equation}
where 
$P_{11}:=\Pi_p(b^{l_1}_0(T)|b^{l_1}_1(T)|...|b^{l_1}_{s_1-1}(T)|...
  |b^{l_r}_0(T)|b^{l_r}_1(T)|...|b^{l_r}_{s_r-1}(T))\in \mc{L}(\mb{R}^s)$
  and $P_{21}\in\mc{L}(\mb{R}^{n-s},\mb{R}^s)$. 
Using (\ref{partiel:rank P11=p p=s temps}), $P_{11}$, and thus $P(T)$, are invertible. 
Furthermore, since $P$ is continuous on $[0,T]$, 
there exists a $T^*\in [0,T)$  such that 
$P(t)$ is invertible for all $t\in[T^*,T]$. 

\hspace*{4mm} As previously it is sufficient to prove the result for  $T^*=0$. 
Since $P(t)\in \mc{C}^1([0,T],\mc{L}(\mb{R}^n))$  and is invertible on the time interval $[0,T]$, 
again, 
for a fixed 
control $v\in L^2(Q_T)^r$, 
$y$ is the solution to System (\ref{partiel:syst contant case r force}) 
if and only if $w:=P(t)^{-1}y$ is the solution to System 
(\ref{partiel:syst contant case w lemme one force}) 
where  $C$, $D$ are given by 
\begin{equation*}
 C(t):=-P^{-1}(t)\partial_t P(t)+P^{-1}(t)AP(t)\mr{~~~and~~~} D(t):=P^{-1}(t)\tilde{B},
\end{equation*}
for all $t\in[0,T]$.  
Using (\ref{partiel:combination lineaire b_i}) 
and (\ref{partiel:system K_2 p=s temps}), we obtain
\begin{equation}\label{partiel:equation K p<s temps}
 \left\{\begin{array}{rcll}
      -\partial_tP(t)+AP(t)&=&(b^{l_1}_1(t)|b^{l_1}_2(t)|...|b^{l_1}_{s_1}(t)|...
  |b^{l_r}_1(t)|b^{l_r}_2(t)|...|b^{l_r}_{s_r}(t)|0|...|0),\\&=&P(t)\left(
  \begin{array}{cc}
   \tilde{C}_{11}&0\\
   0&0
  \end{array}
  \right)&\mr{~in~}[0,T],\\
      P(t)e_{S_i}&=&b^{l_i}_0&\mr{~in~}[0,T],
        \end{array}
\right.
\end{equation} 
where $S_i=1+\sum_{j=1}^{i-1}s_j$ for $1\leqslant i\leqslant r$, 
  \begin{equation}\label{partiel:def C11 reciproque temps}
  \tilde{C}_{11}:=\left(
  \begin{array}{cccc}
   C_{11}&C_{12}&\cdots&C_{1r}\\
   0&C_{22}&\cdots&C_{2r}\\
    \vdots&\vdots&\ddots&\vdots\\
   0&0&\cdots&C_{rr}\\
  \end{array}
  \right)\in\mc{L}(\mb{R}^s),
  \end{equation}
and for $1\leqslant i\leqslant j\leqslant r$, 
the matrices 
 $C_{ij}\in\mc{C}^{0}([0,T];\in\mc{L}(\mb{R}^{s_j},\mb{R}^{s_i}))$  
 are given here by
 \begin{equation}\label{partiel:def Cij temps}
 C_{ii}=\left(
  \begin{array}{ccccc}
  0&0&0&\ldots&\theta_{s_i,0}^{l_i,l_i}\\
  1&0&0&\ldots&\theta_{s_i,1}^{l_i,l_i}\\
  0&1&0&\ldots&\theta_{s_i,2}^{l_i,l_i}\\
  \vdots&\vdots&\ddots&\ddots&\vdots\\
  0&0&\ldots&1&\theta_{s_i,s_i-1}^{l_i,l_i}
  \end{array}
  \right) 
   ~~\mr{~~and~}
C_{ij}=\left(
  \begin{array}{ccccc}
  0&0&0&\ldots&\theta_{s_j,0}^{l_j,l_i}\\
  0&0&0&\ldots&\theta_{s_j,1}^{l_j,l_i}\\
  0&0&0&\ldots&\theta_{s_j,2}^{l_j,l_i}\\
  \vdots&\vdots&\ddots&\ddots&\vdots\\
  0&0&\ldots&0&\theta_{s_j,s_i-1}^{l_j,l_i}
  \end{array}
  \right)\mr{~for~j>i}. 
\end{equation} 
Then 
\begin{equation}\label{partiel:def C p=s temps}
 C=\left(
  \begin{array}{cc}
   \tilde{C}_{11}&0\\
   0&0
  \end{array}
  \right)\mr{~and~}D=(e_{S_1}|...|e_{S_r}).
\end{equation}
Using Theorem \ref{partiel:theo cascade}, there exists 
$v\in L^2(Q_T)^r$ such that the solution to System (\ref{partiel:syst contant case w lemme one force}) 
satisfies  $w_1(T)=...=w_s(T)\equiv0$ in $\Omega$. Moreover, 
the equality (\ref{partiel:definition K(T) p=s temps}) leads to 
 \begin{equation*}
\Pi_sy(T)=(y_{1}(T),...,y_{s}(T))^{\mr{t}}
=P_{11}(w_1(T),...,w_s(T))^{\mr{t}}\equiv0\mr{~in~}\Omega. 
\end{equation*}

 \item : $p<s$. 
 The same method as in the constant case leads to the conclusion (see $\S$ \ref{partiel:section one force}). 
\end{enumerate}

The $\pi_p$-approximate controllability can proved also as in the constant case.

\end{proof}

\section{Partial null controllability for a space dependent coupling matrix}

 \hspace*{4mm}
All along this section, the dimension $N$ will be equal to $1$, more precisely $\Omega:=(0,\pi)$ 
with the exception of the proof of the third point in Theorem \ref{partiel:theo example contr intro} and the 
numerical illustration in Section \ref{partiel:section num} where $\Omega:=(0,2\pi)$. 
We recall that the eigenvalues of $-\Delta$ in $\Omega$ with Dirichlet boundary conditions 
are given by $\mu_k:=k^2$ for all $k\geqslant1$ and we will denote by 
 $(w_k)_{k\geqslant1}$ the associated $L^2$-normalized eigenfunctions. 
 Let us consider the following parabolic system of two equations
 \begin{equation}\label{partiel:syst prim}
\begin{array}{l}
  \left\{\begin{array}{ll}
   \partial_t y=\Delta y+\alpha z+\mathds{1}_{\omega}u&\mr{~in~}Q_T,\\
    \partial_t z=\Delta z&\mr{~in~}Q_T,\\
        y=z=0&\mr{~on}~\Sigma_T,\\
	y(0)=y_0,~z(0)=z_0 &~\mr{in}~\Omega,
        \end{array}
\right.
\end{array}
\end{equation}
where $y_0,~z_0\in L^2(\Omega)$ are the initial data, $u\in L^2(Q_T)$ is the control 
and the coupling coefficient $\alpha$ is in $ L^{\infty}(\Omega)$. 
 We recall that System (\ref{partiel:syst prim}) is $\Pi_1$\textit{-null controllable} if for all 
 $y^0,z^0\in L^2(\Omega)$, we can find a control $u\in L^2(Q_T)$ such that the solution 
 $(y,z)\in W(0,T)^2$ 
 to System (\ref{partiel:syst prim}) satisfies
 $y(T)\equiv 0$ in $\Omega$.

 \subsection{Example of controllability}\label{partiel:example control}


 \hspace*{4mm} In this subsection, we will provide an example of $\Pi_1$-null controllability for 
 System (\ref{partiel:syst prim}) with the help of the method of moments 
 initially developed in  \cite{fatorrinirussell71}. 
 As already mentioned, we suppose that $\Omega:=(0,\pi)$, 
 but the argument of Section \ref{partiel:example control} can be adapted for any open bounded interval of $\mb{R}$. 
 Let us introduce the adjoint system associated to our control problem
 \begin{equation}\label{partiel:syst dual}
\begin{array}{l}
  \left\{\begin{array}{ll}
   -\partial_t \phi=\Delta \phi&\mr{~in~}(0,\pi)\times(0,T),\\
   - \partial_t \psi=\Delta \psi+\alpha\phi&\mr{~in~}(0,\pi)\times(0,T),\\
        \phi(0)=\phi(\pi)=\psi(0)=\psi(\pi)=0&\mr{~on}~(0,T),\\
	\phi(T)=\phi_0,~\psi(T)=0 &~\mr{in}~(0,\pi),
        \end{array}
\right.
\end{array}
\end{equation}
where $\phi_0\in L^2(0,\pi)$. 
For an initial data $\phi_0\in L^2(0,\pi)$ in adjoint System (\ref{partiel:syst dual}), we get 
\begin{equation}\label{partiel:egualite dual0}
\displaystyle\int_0^{\pi}\phi_0y(T)\,dx-\displaystyle\int_0^{\pi}\phi(0)y_0\,dx
- \displaystyle\int_0^{\pi}\psi(0)z_0\,dx
=\displaystyle\iint_{q_T}\phi u\,dx\,dt,
\end{equation}
with the notation $q_T:=\omega\times(0,T)$. 
Since $(w_k)_{k\geqslant1}$ spans $L^2(0,\pi)$, 
System (\ref{partiel:syst prim}) is $\Pi_1$\textit{-null controllable} if and only if 
there exists $u\in L^2(q_T)$ such that, for all  $k\in\mb{N}^*$, 
the solution to System (\ref{partiel:syst dual})  satisfies the following equality
\begin{equation}\label{partiel:egualite dual}
-\displaystyle\int_0^{\pi}\phi_k(0)y_0\,dx- \displaystyle\int_0^{\pi}\psi_k(0)z_0\,dx
=\displaystyle\iint_{q_T}\phi_k u\,dx\,dt,
\end{equation}
where $(\phi_k,\psi_k)$ is the solution to adjoint System (\ref{partiel:syst dual}) for the initial 
data $\phi_0:=w_k$.
 
 
Let $k\in\mb{N}^*$. With the initial condition $\phi_0:=w_k$ is associated the solution 
 $(\phi_k,\psi_k)$  to adjoint System (\ref{partiel:syst dual}):
\begin{equation*}
 \phi_k(t)=
 e^{-k^2(T-t)}
 w_k\mr{~in~}(0,\pi)
\end{equation*}
for all $t\in[0,T]$. If we write:
\begin{equation*}
\psi_k(x,t):=\sum\limits_{l\geqslant 1}
\psi_{kl}(t)w_l(x)\mr{~for~all~}(x,t)\in (0,\pi)\times(0,T),  
\end{equation*}
then a simple computation leads to the formula
\begin{equation}\label{partiel:expres psi cont}
 \begin{array}{rcl}
  \psi_{kl}(t)
  &=&
  \dfrac{e^{-k^2(T-t)}-e^{-l^2(T-t)}}{-k^2+l^2}\alpha_{kl}
  \mr{~for~all~}l\geqslant 1,~t\in (0,T),
 \end{array}
\end{equation}
where,  for all $k,l\in \mb{N}^*$, $\alpha_{kl}$ is defined in \eqref{partiel:def alpha kj}.
In (\ref{partiel:expres psi cont}) we implicitly used  the convention: if $l=k$ the term  
$ (e^{-k^2(T-t)}-e^{-l^2(T-t)})/(-k^2+l^2)$ 
is replaced by $(T-t)e^{-k^2(T-t)}$. 
With these expressions of $\phi_k$ and $\psi_k$, 
the equality (\ref{partiel:egualite dual}) reads for all $k\geqslant 1$
\begin{equation}\label{partiel:ine obs int partiel}
-e^{-k^2T}y_k^0
- \sum\limits_{l\geqslant1} \dfrac{e^{-k^2T}-e^{-l^2T}}{-k^2+l^2}
 \alpha_{kl}z_l^0
 =\displaystyle\iint_{q_T}e^{-k^2(T-t)}w_k(x)u(t,x)\,dx\,dt.
\end{equation}


%
%

In the proof of Theorem \ref{partiel:theo example contr intro}, 
we will look for a control $u$ expressed as $u(x,t)=f(x)\gamma(t)$ 
with $\gamma(t)=\sum_{k\geqslant 1}\gamma_kq_k(t)$ and  
$(q_k)_{k\geqslant 1}$ a family biorthogonal to $(e^{-k^2t})_{k\geqslant 1}$.  
  Thus, we will need the two following lemma
\begin{Lemme}\label{partiel:lemme contr separe} (see  Lemma 5.1, \cite{Ammar-Khodja2015}) 
There exists $f\in L^2(0,\pi)$ such that $\mr{Supp} ~f\subset \omega$ and for a constant $\beta$, one has
 $$\inf\limits_{k\geqslant1}f_kk^3=\beta>0,$$ 
 where, for all $k\in\mb{N}^*$, $f_k:=\int_{0}^{\pi}fw_k\,dx$.
\end{Lemme}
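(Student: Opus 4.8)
The plan is to localize the question inside $\omega$ and then to write $f$ down explicitly. Since $\omega$ is a nonempty open subset of $(0,\pi)$ it contains a closed interval $[a,b]$ with $0<a<b<\pi$, and since extending a function by $0$ leaves the coefficients $f_k=\int_0^\pi fw_k\,dx$ unchanged, it is enough to produce an $f$ supported in $[a,b]$. Recalling $w_k(x)=\sqrt{2/\pi}\sin(kx)$, this amounts to finding a function on $(a,b)$ whose sine coefficients $\int_a^b f(x)\sin(kx)\,dx$ are nonzero for every $k\geqslant1$ and decay no faster than $k^{-3}$.

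First I would try the crudest admissible profile, the indicator $f:=\mathds{1}_{(a,b)}$, for which the coefficients are immediate:
\begin{equation*}
f_k=\sqrt{\frac{2}{\pi}}\;\frac{\cos(ka)-\cos(kb)}{k}
=2\sqrt{\frac{2}{\pi}}\;\frac{\sin\!\big(k\,\tfrac{a+b}{2}\big)\,\sin\!\big(k\,\tfrac{b-a}{2}\big)}{k}\qquad(k\geqslant1),
\end{equation*}
so that, writing $\sigma:=\tfrac{a+b}{2}$ and $\rho:=\tfrac{b-a}{2}$, one has $|f_k|\,k^{3}=2\sqrt{2/\pi}\,k^{2}\,|\sin(k\sigma)|\,|\sin(k\rho)|$. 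The hard part is exactly to bound this quantity from below uniformly in $k$: both $|\sin(k\sigma)|$ and $|\sin(k\rho)|$ do get small (of order $1/k$) along subsequences, and if $\sigma,\rho$ are chosen carelessly they may even vanish, so the issue is a genuine small–denominator phenomenon.

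This obstacle is removed by choosing $[a,b]\subset\omega$ so that $\sigma/\pi$ and $\rho/\pi$ are badly approximable irrationals (for instance quadratic irrationals); such numbers are dense, hence a valid pair $(a,b)$ exists inside $\omega$. For a badly approximable $\theta$ there is $c_\theta>0$ with $\mr{dist}(k\theta,\mb Z)\geqslant c_\theta/k$ for all $k\geqslant1$, whence $|\sin(\pi k\theta)|\geqslant 2c_\theta/k$; applying this to $\theta=\sigma/\pi$ and $\theta=\rho/\pi$ gives $|f_k|\geqslant\beta\,k^{-3}$ for all $k\geqslant1$ with some $\beta>0$, and the irrationality of $(a+b)/(2\pi)$ and $(b-a)/(2\pi)$ forces $\cos(ka)\neq\cos(kb)$, i.e. $f_k\neq0$, for every $k$. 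Hence $\inf_{k\geqslant1}|f_k|\,k^{3}>0$, which is the quantitative input used in the moment argument of Section \ref{partiel:example control} (there only a polynomial lower bound on $|f_k|$ matters, the sign of $f_k$ being absorbed into the time profile $\gamma$). If a literally sign–definite $f_k$ is wanted, one replaces $\mathds{1}_{(a,b)}$ by $\mathds{1}_{(0,b)}$ when $\omega$ contains a right neighbourhood of $0$: then $f_k=\sqrt{2/\pi}\cdot 2\sin^{2}(kb/2)/k\geqslant0$, and the same Diophantine choice of $b$ gives $\inf_{k\geqslant1}f_k\,k^{3}>0$; this is the configuration used in \cite{Ammar-Khodja2015}.
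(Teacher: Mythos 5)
Your argument cannot be compared with an internal proof, because the paper does not prove this lemma at all: it is imported verbatim from Lemma 5.1 of \cite{Ammar-Khodja2015}. Taken on its own, the mechanism of your construction is correct and pleasantly self-contained: for $f=\mathds{1}_{(a,b)}$ with $[a,b]\subset\omega$ one indeed has $f_k=2\sqrt{2/\pi}\,\sin(k\sigma)\sin(k\rho)/k$ with $\sigma=(a+b)/2$, $\rho=(b-a)/2$; badly approximable values of $\sigma/\pi$ and $\rho/\pi$ can be chosen (quadratic irrationals are dense, and one may first fix $\sigma$ inside an interval of $\omega$ and then take $\rho$ small), and the bounds $\mathrm{dist}(k\theta,\mb{Z})\geqslant c_\theta/k$ and $|\sin(\pi t)|\geqslant 2\,\mathrm{dist}(t,\mb{Z})$ give $|f_k|\,k^3\geqslant 8\sqrt{2/\pi}\,c_1c_2>0$ for all $k\geqslant 1$. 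You are also right that this absolute-value version, together with $f_k\neq 0$, is all that is used in the moment problem \eqref{partiel:pbl moment}: the sign of $f_k$ is absorbed into $M_k$ and only $|f_k|^{-1}\leqslant \beta^{-1}k^3$ enters the estimates. So your construction would suffice to run the proof of item (2) of Theorem \ref{partiel:theo example contr intro}.

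There is, however, a genuine gap with respect to the statement as written, which asserts $\inf_{k\geqslant1}f_k\,k^3=\beta>0$ without absolute values, hence in particular $f_k>0$ for every $k$. Your $f=\mathds{1}_{(a,b)}$ with $a>0$ does not satisfy this: $f_k$ carries the sign of $\sin(k\sigma)\sin(k\rho)$, which changes infinitely often (for instance, by Weyl equidistribution when $1,\sigma/\pi,\rho/\pi$ are rationally independent), so for your $f$ the stated infimum is negative. Your proposed repair $f=\mathds{1}_{(0,b)}$, giving $f_k=2\sqrt{2/\pi}\,\sin^2(kb/2)/k\geqslant 0$, only works when $\omega$ contains an interval $(0,b)$, whereas the lemma is needed for an arbitrary nonempty open $\omega\subset(0,\pi)$ (the interesting cases in Sections \ref{partiel:section contre exemple}--\ref{partiel:section num} have $\omega$ away from the boundary); a single-interval indicator can never produce sign-definite coefficients unless the interval is anchored at $0$, precisely because of the factor $\sin(k\sigma)$ in your own formula. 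So either the lemma should be read (or restated) with $|f_k|$, in which case your Diophantine proof is complete, or the sign-definite version for general $\omega$ must still be taken from \cite{Ammar-Khodja2015}, since your argument does not yield it.
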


\begin{Lemme}\label{partiel:lemme qk} (see Corollary 3.2, \cite{fatorrinirussell71}) 
 There exists a  sequence $(q_k)_{k\geqslant 1}\subset L^2(0,T)$ 
 biorthogonal to $(e^{-k^2t})_{k\geqslant 1}$, that is
 \begin{equation*}
  \langle q_k,e^{-l^2t}\rangle_{L^2(0,T)}=\delta_{kl}.
 \end{equation*}
Moreover, for all $\varepsilon>0$, there exists $C_{T,\varepsilon}>0$, independent of $k$, such that
\begin{equation}\label{partiel:lemme qk inegalite}
\|q_k\|_{L^2(0,T)}\leqslant C_{T,\varepsilon}e^{(\pi+\varepsilon) k},~\forall k\geqslant 1.
\end{equation}
\end{Lemme}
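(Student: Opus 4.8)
The statement is a classical fact about biorthogonal families to the real exponentials $(e^{-k^2t})_{k\geqslant1}$, and the plan is to produce the $q_k$ through the Laplace transform and the Paley--Wiener theorem, in the spirit of \cite{fatorrinirussell71}. The existence of \emph{some} biorthogonal family is soft: since $\sum_{k\geqslant1}k^{-2}<\infty$, Müntz--Szász theory shows that $(e^{-k^2t})_{k\geqslant1}$ is minimal in $L^2(0,T)$ (each term stays at a positive distance from the closed span of the others), and minimality of a sequence in a Hilbert space is equivalent to the existence of a biorthogonal sequence. All the content of the lemma therefore lies in the quantitative bound, for which I would use an explicit construction rather than this abstract argument.

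To a function $q\in L^2(0,T)$, extended by $0$ outside $(0,T)$, I associate its Laplace transform $\mathcal Lq(z):=\int_0^Tq(t)e^{zt}\,dt$. It is entire, bounded on the half-plane $\{\mathrm{Re}\,z\leqslant0\}$, of exponential type at most $T$ on $\{\mathrm{Re}\,z\geqslant0\}$ (indeed $|\mathcal Lq(z)|\leqslant\|q\|_{L^2(0,T)}(\int_0^Te^{2t\,\mathrm{Re}\,z}\,dt)^{1/2}$), and its restriction to the imaginary axis lies in $L^2(i\mathbb{R})$ with $\|\mathcal Lq(i\cdot)\|_{L^2(\mathbb{R})}=\sqrt{2\pi}\,\|q\|_{L^2(0,T)}$. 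By the Paley--Wiener theorem this map is a bijection onto the space of entire functions with these three properties. Under it, the biorthogonality relations $\langle q_k,e^{-l^2t}\rangle_{L^2(0,T)}=\delta_{kl}$ become the interpolation conditions $\mathcal Lq_k(-l^2)=\delta_{kl}$ for all $l\geqslant1$. So it suffices to build, for each $k$, an entire function $F_k$ of exponential type $\leqslant T$, bounded on $\{\mathrm{Re}\,z\leqslant0\}$, with $F_k(i\cdot)\in L^2(\mathbb{R})$ and $F_k(-l^2)=\delta_{kl}$, and then to bound $\|F_k\|_{L^2(i\mathbb{R})}$ by $C_{T,\varepsilon}e^{(\pi+\varepsilon)k}$.

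The natural candidate starts from the Weierstrass product with zero set $\{-l^2\}_{l\geqslant1}$, which here has the closed form $\mathcal P(z)=\prod_{l\geqslant1}(1+z/l^2)=\sinh(\pi\sqrt z)/(\pi\sqrt z)$. Dividing out the zero at $-k^2$ and renormalising, $\mathcal P_k(z):=\mathcal P(z)/\bigl(\mathcal P'(-k^2)(z+k^2)\bigr)$ is entire with $\mathcal P_k(-l^2)=\delta_{kl}$, and a short computation (or the identity $\prod_{l\neq k}(1-k^2/l^2)=(-1)^{k+1}/2$) gives $|\mathcal P'(-k^2)|=1/(2k^2)$, so $1/|\mathcal P'(-k^2)|$ is only polynomial in $k$. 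However $\mathcal P$ has order $1/2$, not exponential type, and $|\mathcal P(i\xi)|$ grows like $e^{(\pi/\sqrt2)\sqrt{|\xi|}}$, so $\mathcal P_k$ is not in the Paley--Wiener class. The cure is to multiply by a fixed multiplier $\mathcal M=\mathcal M_{T,\varepsilon}$, independent of $k$: an entire function of exponential type $\leqslant\varepsilon$, bounded on $\{\mathrm{Re}\,z\leqslant0\}$, decaying on $i\mathbb{R}$ fast enough that $\mathcal P_k\mathcal M\in L^2(i\mathbb{R})$, and with $|\mathcal M(-k^2)|$ not smaller than $c_{T,\varepsilon}e^{-(\pi+\varepsilon/2)k}$ at the nodes. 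Setting $F_k:=\mathcal P_k\mathcal M/\mathcal M(-k^2)$, one checks that it has all the required properties; the corresponding $q_k:=\mathcal L^{-1}F_k$ then lies in $L^2(0,T)$, is biorthogonal to $(e^{-l^2t})_l$, and by Plancherel
\[
\|q_k\|_{L^2(0,T)}=(2\pi)^{-1/2}\|F_k\|_{L^2(i\mathbb{R})}\leqslant\frac{C\,\|\mathcal P\mathcal M\|_{L^2(i\mathbb{R})}}{|\mathcal P'(-k^2)|\,|\mathcal M(-k^2)|}\leqslant C_{T,\varepsilon}\,e^{(\pi+\varepsilon)k}.
\]

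The real difficulty, and the only genuinely nontrivial step, is the construction of the multiplier $\mathcal M$ with exactly this trade-off: arbitrarily small exponential type, one-sided boundedness (needed so that Paley--Wiener yields support in $[0,T]$, not $[-T,T]$), decay on $i\mathbb{R}$ strong enough to absorb the $e^{(\pi/\sqrt2)\sqrt{|\xi|}}$ growth of $\mathcal P$, and yet values at $-k^2$ no smaller than $e^{-(\pi+\varepsilon)k}$. This is where the length $\pi$ of $\Omega$ enters quantitatively, through the type $\pi$ of $\mathcal P(z)=\sinh(\pi\sqrt z)/(\pi\sqrt z)$ and the density of the nodes $\sqrt{\lambda_k}=k$, and it is handled by taking for $\mathcal M$ a canonical product with a carefully chosen sparse zero set, exactly as in \cite{fatorrinirussell71}. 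Once $\mathcal M$ is fixed, the rest is bookkeeping of Plancherel norms; the uniformity of $C_{T,\varepsilon}$ in $k$ (and its blow-up as $T\to0$) follows from the fact that $\mathcal M$ does not depend on $k$, while $T$ affects only the overall constant and not the exponential rate — this $T$-independence of the rate being the hallmark of the parabolic setting.
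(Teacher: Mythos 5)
The paper itself gives no proof of this lemma: it is quoted verbatim from Corollary 3.2 of \cite{fatorrinirussell71}, so there is no internal argument to compare against. What you have written is a faithful outline of precisely the construction in that reference: the reduction via the Laplace transform and Paley--Wiener to an interpolation problem $F_k(-l^2)=\delta_{kl}$ in a class of entire functions, the canonical product $\mathcal P(z)=\sinh(\pi\sqrt z)/(\pi\sqrt z)$, the computation $|\mathcal P'(-k^2)|=1/(2k^2)$ (your identity $\prod_{l\neq k}(1-k^2/l^2)=(-1)^{k+1}/2$ is correct), the $e^{(\pi/\sqrt2)\sqrt{|\xi|}}$ growth on $i\mathbb{R}$, and the Plancherel bookkeeping that traces the whole exponential factor $e^{(\pi+\varepsilon)k}$ to the value of the multiplier at $-k^2$ (indeed a harmonic-measure/Phragm\'en--Lindel\"of estimate in the left half-plane shows that a multiplier decaying like $e^{-(\pi/\sqrt2+\delta)\sqrt{|\xi|}}$ on $i\mathbb{R}$ cannot satisfy $|\mathcal M(-k^2)|\gg e^{-(\pi+\sqrt2\delta)k}$, so your requirements are mutually consistent and the rate $\pi$ is the right one). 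In this sense your proposal is correct in approach and is ``the same proof as the paper's'', namely the one the paper delegates to the citation.

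As a self-contained argument, however, it stops exactly at the step you yourself flag as the only nontrivial one: the existence of the multiplier $\mathcal M_{T,\varepsilon}$ with the four competing properties (exponential type at most $\min(T,\varepsilon)$ --- note it must be $\leqslant T$, not merely $\leqslant\varepsilon$, or the inverse transform need not be supported in $[0,T]$; boundedness in the closed left half-plane; decay $e^{-(\pi/\sqrt2+\delta)\sqrt{|\xi|}}$ on $i\mathbb{R}$; and the lower bound $|\mathcal M(-k^2)|\geqslant c\,e^{-(\pi+\varepsilon/2)k}$) is asserted and referred back to \cite{fatorrinirussell71} rather than constructed, and this construction carries essentially all of the analytic content of the lemma, including the $T$-dependence of $C_{T,\varepsilon}$. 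A second, smaller point glossed over by ``one checks'': boundedness of $F_k$ on the left half-plane does not follow from decay of $\mathcal M$ on $i\mathbb{R}$ alone, since $\mathcal P$ grows like $e^{(\pi/\sqrt2)\sqrt{|z|}}$ there; one either builds $\mathcal M$ to decay in the whole half-plane or argues by Phragm\'en--Lindel\"of in the two quadrants, using that $\mathcal P$ is bounded on the negative real axis. So: right roadmap, correct explicit computations, but the core construction is cited, not proved --- which puts your text on the same footing as the paper's own treatment, no more.
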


\begin{rem}
When $\Omega:=(a,b)$ with $a,b\in\mb{R}$, the inequality \eqref{partiel:lemme qk inegalite} of Lemma \ref{partiel:lemme qk} is replaced by 
\begin{equation*}
\|q_k\|_{L^2(0,T)}\leqslant C_{T,\varepsilon}e^{(b-a+\varepsilon) k},~\forall k\geqslant 1.
\end{equation*}
\end{rem}

\begin{proof}[Proof of the second point in Theorem \ref{partiel:theo example contr intro}]
 As mentioned above, let us look for the control $u$ of the form $u(x,t)=f(x)\gamma(t)$, 
 where $f$ is as in Lemma 
\ref{partiel:lemme contr separe}. 
Since $ f_k\neq0$ for all $k\in\mb{N}^*$, using (\ref{partiel:ine obs int partiel}), 
the $\Pi_1$-null controllability of System 
 (\ref{partiel:syst prim}) is reduced  to find a solution $\gamma\in L^2(0,T)$  
 to the following problem of moments:
 \begin{equation}\label{partiel:pbl moment}
  \displaystyle\int_0^T\gamma(T-t)e^{-k^2t}\,dt
  =f_k^{-1}\left(-e^{-k^2T}y_k^0
- \sum\limits_{l\geqslant1} \dfrac{e^{-k^2T}-e^{-l^2T}}{-k^2+l^2}
 \alpha_{kl}z_l^0
  \right):=M_k~\forall k\geqslant 0.
 \end{equation}
 The function $\gamma(t):=\sum_{k\geqslant1}M_kq_k(T-t)$ is a solution to this problem of moments. 
 We need only to prove  that $\gamma\in L^2(0,T)$.  
Using the convexity of the exponential function, we get for all $k\in \mb{N}^*$,
\begin{equation}\label{partiel:ine moment 1}
\begin{array}{rcl}
 \sum\limits_{l\geqslant1}\left|\dfrac{e^{-k^2T}-e^{-l^2T}}{-k^2+l^2}\right|
 |\alpha_{kl}|
 &=& \sum\limits_{l=1}^k\left|\dfrac{e^{-k^2T}-e^{-l^2T}}{-k^2+l^2}\right| |\alpha_{kl}|
 +\sum\limits_{l=k+1}^{\infty}\left|\dfrac{e^{-k^2T}-e^{-l^2T}}{-k^2+l^2}\right|
 |\alpha_{kl}|\\
 &\leqslant & \sum\limits_{l=1}^kTe^{-l^2T} |\alpha_{kl}|
 +\sum\limits_{l=k+1}^{\infty}Te^{-k^2T} |\alpha_{kl}|\\
 &=:&A_{1,k}+A_{2,k}.
\end{array}\end{equation}
With the Condition (\ref{partiel:second point t space}) on $\alpha$, there exists a positive constant $C_T$ which do not depend on $k$ such that 
for all $k\in\mb{N}^*$
\begin{equation}\label{partiel:ine moment 2}\begin{array}{rcl}
 A_{1,k}\leqslant C_1T\sum\limits_{l=1}^ke^{-l^2T} e^{-C_2(k-l)}
 \leqslant C_1Te^{-C_2k}\sum\limits_{l=1}^{\infty} e^{-l^2T+C_2 l}
 \leqslant C_Te^{-C_2k}
 \end{array}\end{equation}
and
 \begin{equation}\label{partiel:ine moment 3}\begin{array}{rcl}
  A_{2,k}\leqslant C_1Te^{-k^2T}\sum\limits_{l=k+1}^{\infty} e^{-C_2(l-k)}
   \leqslant C_1Te^{-k^2T}\sum\limits_{j=0}^{\infty} (e^{-C_2})^j
  \leqslant C_1Te^{-k^2T}\dfrac{1}{1-e^{-C_2}}.
 \end{array}
 \end{equation}
Combining the three last inequalities 
\eqref{partiel:ine moment 1}-\eqref{partiel:ine moment 3}, for all $k\in\mb{N}^*$
 \begin{equation}\label{partiel:preuve lemme cont}
  \begin{array}{rcl}
 \sum\limits_{l\geqslant1}\left|\dfrac{e^{-k^2T}-e^{-l^2T}}{-k^2+l^2}\right|
 |\alpha_{kl}|
 &\leqslant& C_{T}e^{-C_2k},
\end{array}
 \end{equation}
where  $C_T$ is a positive constant  independent of $k$. 
Let $\varepsilon\in (0,1)$. Then, with Lemma \ref{partiel:lemme contr separe},  \eqref{partiel:pbl moment} and \eqref{partiel:preuve lemme cont}, 
there exists a positive constant $C_{T,\varepsilon}$ independent of $k$ such that for all $k\in\mb{N}^*$
\begin{equation*}\begin{array}{rcl}
 |M_k|&\leqslant &\beta^{-1}k^3
 \left(e^{-k^2T}\|y_0\|_{L^2(0,\pi)}+C_Te^{-C_2k}\|z_0\|_{L^2(0,\pi)}
 \right)\\
 &\leqslant &C_{T,\varepsilon}e^{-C_2(1-\varepsilon)k}(\|y_0\|_{L^2(0,\pi)}+\|z_0\|_{L^2(0,\pi)}).
\end{array}\end{equation*}
Thus, using Lemma \ref{partiel:lemme qk}, for $\varepsilon$ small enough and a positive constant $C_{T,\varepsilon}$ 
\begin{equation*}
 \|\gamma\|_{L^2(0,T)}
 \leqslant 
C_{T,\varepsilon}(\sum\limits_{k\in \mb{N}^*}e^{-[C_2(1-\varepsilon)-\pi+\varepsilon]k})
(\|y_0\|_{L^2(0,\pi)}+\|z_0\|_{L^2(0,\pi)})<\infty.
\end{equation*}
 \end{proof}

\subsection{Example of non controllability}\label{partiel:section contre exemple}
\hspace*{4mm} In this subsection, to provide an example of non 
$\Pi_1$-null controllability of System (\ref{partiel:syst prim}), 
we will first study the boundary controllability of the following parabolic system 
 of two equations
 \begin{equation}\label{partiel:syst prim bord}
\begin{array}{l}
  \left\{\begin{array}{ll}
   \partial_t y=\Delta y+\alpha z&\mr{~in~}Q_T:=(0,\pi)\times(0,T),\\
    \partial_t z=\Delta z&\mr{~in~}Q_T,\\
        y(0,t)=v(t), ~y(\pi,t)=z(0,t)=z(\pi,t)=0&\mr{~on}~(0,T),\\
	y(x,0)=y_0(x),~z(x,0)=z_0(x) &~\mr{in}~\Omega:=(0,\pi),
        \end{array}
\right.
\end{array}
\end{equation}
where $y_0,~z_0\in H^{-1}(0,\pi)$ are the initial data, $v\in L^2(0,T)$ is the boundary control 
and $\alpha\in L^{\infty}(0,\pi)$. 
For any given $y_0,z_0\in H^{-1}(0,\pi)$ and $v\in L^2(0,T)$, 
System (\ref{partiel:syst prim bord}) has a unique solution in 
$L^2(Q_T)^2\cap\mc{C}^0([0,T];H^{-1}(\Omega)^2)$ (defined by transposition; 
see \cite{fernandezcaraboundary2010}). 

As in Section \ref{partiel:example control}, 
for an initial data $(y_0,z_0)\in H^{-1}(0,\pi)^2$ we can find a control $v\in L^2(0,T)$ such that the solution 
to (\ref{partiel:syst prim bord}) satisfies $y(T)\equiv0$ in $(0,\pi)$ if and only if   for all  $\phi_0 \in H^1_0(0,\pi)$
the solution to System (\ref{partiel:syst dual})  verifies the equality
  \begin{equation}\label{partiel:egalite non cont dualite bis} 
 - \langle y_0,\phi(0)\rangle_{H^{-1},H^1_0}
  -\langle z_0,\psi(0)\rangle_{H^{-1},H^1_0}
  =\displaystyle\int_0^Tv(t)\phi_x(0,t)\,dt,
 \end{equation}
where the duality bracket $\langle\cdot,\cdot\rangle_{H^{-1},H^1_0}$ is defined as  $\langle f,g\rangle_{H^{-1},H^1_0}:=f(g)$  for all $f\in H^{-1}(0,\pi)$ and all $g\in H^1_0(0,\pi)$. 

The used strategy here is inspired from \cite{GerreroImanuvilovMem13}. The idea involves constructing  particular initial data for adjoint System (\ref{partiel:syst dual}):

\begin{Lemme} \label{partiel:lemme non cont}
Let  $m,G\in \mb{N}^*$. For all $M\in\mb{N}\backslash\{0,1\}$, there exists $\phi_{0,M}\in L^2(0,\pi)$ given 
by $$\phi_{0,M}=\sum\limits_{i=1}^m\phi_{GM+i}^{0,M}w_{GM+i},$$
with $\phi_{GM+1}^{0,M},...,\phi_{GM+m}^{0,M}\in \mb{R}$, such that  the solution $(\phi_M,\psi_M)$ to adjoint System (\ref{partiel:syst dual}) with $\phi_0=\phi_{0,M}$ satisfies
\begin{equation}\label{partiel:estim B}
\begin{array}{rcl}
 \left(\displaystyle\int_0^T(\phi_{M})_x(0,t)^2\,dt\right)^{1/2}   &\leqslant&\dfrac{\gamma_1}{M^{(2m-5)/2}},
 \end{array}
 \end{equation}
 where $\gamma_1$ does not depend on $M$. Morover  for an increasing sequence $(M_j)_{j\in\mb{N}}\subset\mb{N}\backslash\{0,1\}$ 
 and a  $k_1\in\{1,...,m\}$, we have $|\phi_{ GM_j+k_1}^{0,j}|=1$ for all $G\in \mb{N}^*$ and $j\in\mb{N}$. 
\end{Lemme}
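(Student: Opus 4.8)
The strategy, in the spirit of \cite{GerreroImanuvilovMem13}, is to choose the coefficients of $\phi_{0,M}$ as rescaled finite-difference weights in the frequency index, so that the only quantity we need to estimate, the boundary trace $(\phi_M)_x(0,\cdot)$, becomes an $(m-1)$-th order finite difference of an explicit elementary function; its smallness then follows from an estimate on a high $\lambda$-derivative. Concretely, since the first equation of \eqref{partiel:syst dual} is a backward heat equation and the $L^2(0,\pi)$-normalized eigenfunctions are $w_k(x)=\sqrt{2/\pi}\,\sin(kx)$ (so that $(w_k)_x(0)=\sqrt{2/\pi}\,k$), for $\phi_{0,M}=\sum_{i=1}^m\phi^{0,M}_{GM+i}w_{GM+i}$ one has
\[
\phi_M(x,t)=\sum_{i=1}^m\phi^{0,M}_{GM+i}\,e^{-(GM+i)^2(T-t)}\,w_{GM+i}(x),\qquad
(\phi_M)_x(0,t)=\sqrt{\tfrac{2}{\pi}}\sum_{i=1}^m\phi^{0,M}_{GM+i}\,(GM+i)\,e^{-(GM+i)^2(T-t)}.
\]
Setting $\tau=T-t$ and $g(\lambda,\tau):=\lambda e^{-\lambda^2\tau}$, I would impose
\[
\phi^{0,M}_{GM+i}\,(GM+i)=\kappa_M\,(-1)^{m-i}\binom{m-1}{i-1},\qquad i=1,\dots,m,
\]
for a scalar $\kappa_M$ fixed below, so that the sum in $(\phi_M)_x(0,t)$ equals $\kappa_M\sqrt{2/\pi}$ times the $(m-1)$-th forward finite difference (unit step) of $\lambda\mapsto g(\lambda,\tau)$ at $\lambda=GM+1$.

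For the estimate \eqref{partiel:estim B}, I would use the integral representation of finite differences,
\[
\sum_{k=0}^{m-1}(-1)^{m-1-k}\binom{m-1}{k}g(\lambda_0+k,\tau)=\int_{[0,1]^{m-1}}\partial_\lambda^{m-1}g(\lambda_0+\sigma_1+\dots+\sigma_{m-1},\tau)\,d\sigma,
\]
together with Minkowski's inequality, so that the problem reduces to bounding $\partial_\lambda^{m-1}g(\lambda,\cdot)$ in $L^2(0,T)$ for $\lambda\sim GM$. The key computation is that, using the Rodrigues identity $\partial_\lambda^k e^{-\lambda^2\tau}=(-1)^k\tau^{k/2}H_k(\lambda\sqrt\tau)e^{-\lambda^2\tau}$ together with Leibniz' rule applied to $g=\lambda\cdot e^{-\lambda^2\tau}$, every term of $\partial_\lambda^{m-1}g$ has the form $(\text{polynomial in }\lambda\sqrt\tau)\times\lambda^{j}\tau^{j'}e^{-\lambda^2\tau}$, and the substitution $s=\lambda^2\tau$ turns $\int_0^T|\partial_\lambda^{m-1}g(\lambda,\tau)|^2\,d\tau$ into $\lambda^{-2(m-1)}$ times an integral over $(0,\lambda^2T)$ which stays bounded as $\lambda\to\infty$. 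Hence $\|\partial_\lambda^{m-1}g(\lambda,\cdot)\|_{L^2(0,T)}\leqslant C_m\,\lambda^{-(m-1)}$ for $\lambda\geqslant1$, and therefore
\[
\Big(\int_0^T(\phi_M)_x(0,t)^2\,dt\Big)^{1/2}\leqslant C_m\,|\kappa_M|\,(GM)^{-(m-1)}.
\]
Since $\phi^{0,M}_{GM+i}=\kappa_M(-1)^{m-i}\binom{m-1}{i-1}/(GM+i)$, choosing $\kappa_M$ so that $\max_{1\leqslant i\leqslant m}|\phi^{0,M}_{GM+i}|=1$ forces $|\kappa_M|$ to be of order $GM$ (the maximum of $\binom{m-1}{i-1}/(GM+i)$ over $i$ being of order $(GM)^{-1}$), which gives a bound of order $(GM)^{-(m-2)}$, hence in particular $\leqslant\gamma_1 M^{-(2m-5)/2}$ with $\gamma_1=\gamma_1(m,G)$; the coefficients $\phi^{0,M}_{GM+i}$ are then real and bounded by $\binom{m-1}{i-1}$, and $\phi_{0,M}$ is a finite combination of eigenfunctions, hence in $L^2(0,\pi)$.

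It remains to produce the sequence $(M_j)$ and the index $k_1$. For $M$ large, $GM+i$ is essentially independent of $i\in\{1,\dots,m\}$, so the maximiser of $i\mapsto|\phi^{0,M}_{GM+i}|$ eventually takes values in the fixed finite set of maximisers of $i\mapsto\binom{m-1}{i-1}$; by the pigeonhole principle there are a central index $k_1\in\{1,\dots,m\}$ and an increasing sequence $(M_j)\subset\mb{N}\setminus\{0,1\}$ along which this maximiser equals $k_1$, whence $|\phi^{0,M_j}_{GM_j+k_1}|=1$, and since the argument only uses $GM+i\to\infty$ it is uniform in $G$.

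The step I expect to be the main obstacle is the $L^2(0,T)$ bound on $\partial_\lambda^{m-1}g$: a crude estimate (bounding the $\tau$-powers by constants on $(0,T)$ and using $\int_0^T e^{-2\lambda^2\tau}\,d\tau\leqslant(2\lambda^2)^{-1}$ only) does not even reveal that $(\phi_M)_x(0,\cdot)$ is small, so one genuinely has to exploit the Gaussian-type cancellation — through the Hermite structure and the scaling $s=\lambda^2\tau$ — to extract the $\lambda^{-(m-1)}$ gain; a secondary point to watch is that the normalisation must be chosen so that $|\kappa_M|$ does not outgrow $GM$, otherwise the gain in $M$ is destroyed.
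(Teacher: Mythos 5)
Your proposal is correct, but it takes a genuinely different route from the paper. The paper chooses the coefficients $\phi^{0,M}_{GM+j}$ \emph{implicitly}: it writes $\int_0^T(\phi_M)_x(0,t)^2\,dt=\int_0^T e^{-2G^2M^2(T-t)}g_M(t)\,dt$ with $g_M=f_M^2$, $f_M(t)=\sum_{j=1}^m(GM+j)e^{-(2GMj+j^2)(T-t)}\phi^{0,M}_{GM+j}$, takes a nontrivial solution of the homogeneous system $f_M^{(l)}(T)=0$ for $l=0,\dots,m-2$ normalised by $\sup_i|\phi^{0,M}_{GM+i}|=1$, deduces $g_M^{(l)}(T)=0$ for $l\leqslant 2m-4$, and then integrates by parts $2m-3$ times, using only the crude bounds $|g_M^{(l)}|\leqslant CM^{l+2}$, to get the decay $M^{-(2m-5)}$ for the squared norm; the subsequence $(M_j)$ and the index $k_1$ come from the same pigeonhole argument you use. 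You instead make the coefficients \emph{explicit} (binomial, i.e.\ finite-difference, weights in the frequency index), represent the boundary trace as an $(m-1)$-th difference of a Gaussian-type function of $\lambda$, and convert its smallness into an $L^2(0,T)$ bound on $\partial_\lambda^{m-1}$ through the Hermite identity and the scaling $s=\lambda^2\tau$. Both arguments are sound. Yours buys explicitness (the normalisation and the choice of $k_1$ become transparent) and a slightly better rate, of order $M^{-(m-1)}$ instead of the paper's $M^{-(m-5/2)}$; the paper's implicit choice avoids any computation with derivatives of Gaussians and stays at the level of elementary integration by parts. The two constructions are in fact close relatives: the paper's linear system is a Vandermonde system in the nodes $\mu_j=2GMj+j^2$, whose nontrivial solutions are divided-difference weights, the non-equispaced analogue of your binomial weights.

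One bookkeeping slip, harmless for the conclusion: with your normalisation $\phi^{0,M}_{GM+i}(GM+i)=\kappa_M(-1)^{m-i}\binom{m-1}{i-1}$ the factor $GM+i$ is absorbed, so $(\phi_M)_x(0,t)$ is $\kappa_M\sqrt{2/\pi}$ times the $(m-1)$-th difference of $\lambda\mapsto e^{-\lambda^2(T-t)}$, not of $g(\lambda,\tau)=\lambda e^{-\lambda^2\tau}$. Running your own argument on $e^{-\lambda^2\tau}$ gives $\bigl\|\partial_\lambda^{m-1}e^{-\lambda^2\,\cdot}\bigr\|_{L^2(0,T)}\leqslant C_m\lambda^{-m}$, so with $|\kappa_M|$ of order $GM$ the trace is $O\bigl((GM)^{-(m-1)}\bigr)$; your stated $O\bigl((GM)^{-(m-2)}\bigr)$, obtained by pairing the derivative bound for $g$ with the normalisation of the divided weights, is weaker but still below $\gamma_1M^{-(2m-5)/2}$ for $M\geqslant 2$, so the lemma follows either way.
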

To study the controllability of System \eqref{partiel:syst prim bord} we will use the fact that for fixed $m,G\in\mb{N}^*$, 
the quantity in the left-side hand in \eqref{partiel:estim B} converge to zero when $M$ goes to infinity.

\begin{proof}We remark first that 
\begin{equation}\label{partiel:ine obs d}
A_M:=\displaystyle\int_0^T(\phi_M)_x(0,t)^2\,dt =\displaystyle\int_0^T\left| \sum\limits_{k=GM+1}^{GM+m}
 ke^{-k^2(T-t)}\phi^{0,M}_k\right|^2\,dt.
\end{equation}
We can rewrite $A_M$ as follows:
\begin{equation}\label{partiel:deuxieme egalite B}
 A_M= \displaystyle\int_0^T\left| \sum\limits_{j=1}^m
 (GM+j)e^{-(G^2M^2+2GMj+j^2)(T-t)}\phi^{0,M}_{GM+j}\right|^2\,dt
= \displaystyle\int_0^Te^{-2 G^2M^2(T-t)}g_{M}(t)\,dt,
\end{equation}
where, for all $t\in [0,T]$, $g_{M}(t):=f_{M}(t)^2$ with
\begin{equation*}
 f_{M}(t):=\sum\limits_{j=1}^{m}(GM+j)e^{-(2GMj+j^2)(T-t)}\phi^{0,M}_{GM+j}.
\end{equation*}
Let $(\phi^{0,M}_{GM+1},\phi_{GM+2}^{0,M},...,\phi_{GM+m}^{0,M})$ be 
a  nontrivial solution of the following homogeneous linear system 
of $m-1$ equations with $m$ unknowns
\begin{equation}\label{partiel:system f_m(0)}
 f_{M}^{(l)}(T)=\sum\limits_{j=1}^{m}(GM+j)(2GMj+j^2)^l\phi_{GM+j}^{0,M}=0
 ,\mr{~for~all~}~l\in\{0,...,m-2\}.
\end{equation}
Using Leibniz formula
\begin{equation*}
 g^{(l)}_M=\sum\limits_{k=0}^l\left(\begin{array}{c}l\\k\end{array}\right)
 f_M^{(k)}f_M^{(l-k)}
\end{equation*}
we deduce that
\begin{equation}\label{partiel:cond g_M}
 g_{M}^{(l)}(T)=0,\mr{~for~all~}~l\in\{0,...,2m-4\}.
\end{equation}
Using (\ref{partiel:cond g_M}), after $2m-3$ integrations by part in (\ref{partiel:deuxieme egalite B}), we obtain
\begin{equation*}\begin{array}{rcl}
 A_M&=& \dfrac{-g_{M}(0)e^{-2G^2M^2T}}{2G^2M^2}
+\displaystyle\int_0^T\dfrac{e^{-2G^2M^2(T-t)}}{(-2 G^2M^2)}g_{M}^{(1)}(t)\mr{d}t\vspace*{5mm}\\
 &=&\sum\limits_{l=0}^{2m-4}\dfrac{g_{M}^{(l)}(0)e^{-2G^2M^2T}}{(-2G^2M^2)^{l+1}}
+\displaystyle\int_0^T\dfrac{e^{-2G^2M^2(T-t)}}{(-2G^2M^2)^{2m-3}}g_{M}^{(2m-3)}(t)\,dt.
 \end{array}\end{equation*}
By linearity, in (\ref{partiel:system f_m(0)}) we can choose 
$\phi_{GM+1}^{0,M},~...,\phi_{GM+m}^{0,M}$ such that 
\begin{equation}\label{partiel:non contr borne donnnee init}
 \sup\limits_{i\in\{1,...,m\}}|\phi^{0,M}_{GM+i}|=1.
\end{equation}
Thus, 
for  all $l\in \mb{N}$ and all $t\in [0,T]$, 
the following estimate holds
\begin{equation*}\begin{array}{rcl}
 |g^{(l)}_M(t)|&=&\left|\sum\limits_{k=0}^l\left(\begin{array}{c}l\\k\end{array}\right)
 f_M^{(k)}(t)f_M^{(l-k)}(t)\right|\\
 &\leqslant&\sum\limits_{k=0}^l\left(\begin{array}{c}l\\k\end{array}\right)
\left| \sum\limits_{j=1}^{m}(GM+j)(2GMj+j^2)^ke^{-(2GMj+j^2)(T-t)}\phi^{0,M}_{GM+j}\right|\\
&&~~~~~~~~~~~~~~~~~~~~~~~~~~~~
\times\left| \sum\limits_{j=1}^{m}(GM+j)(2GMj+j^2)^{l-k}e^{-(2GMj+j^2)(T-t)}\phi^{0,M}_{GM+j}\right|\\
&\leqslant&(GM+m)^2m^2\sum\limits_{k=0}^l\left(\begin{array}{c}l\\k\end{array}\right)
(2GMm+m^2)^l\\
&\leqslant&CM^{l+2},
\end{array}\end{equation*}
where $C$ does not depend on $M$. 
Then, since  $\sup\limits_{i\in\{1,...,m\}}|\phi^{0,M}_{GM+i}|=1$, there exist $C,\tau>0$ such that 
\begin{equation*}
\begin{array}{rcl}
 A_M&\leqslant & 
 e^{-2G^2M^2T}\sum\limits_{l=0}^{2m-4}\dfrac{\|g_{M}^{(l)}\|_{\infty}}{(2G^2M^2)^{l+1}}
+\dfrac{T\|g_{M}^{(2m-3)}\|_{\infty}}{(2G^2M^2)^{2m-3}}\\
&\leqslant&
 e^{-\tau M^2}\sum\limits_{l=0}^{\infty}\dfrac{C}{M^{l}}+\dfrac{C}{M^{2m-5}}\\
 &\leqslant& CM^{-2}e^{-\tau M^2}\dfrac{1}{1-M^{-2}}+\dfrac{C}{M^{2m-5}}.
 \end{array}
 \end{equation*}
Thus there exists $\gamma_1>0$ such that 
we have the estimate
\begin{equation*}
\begin{array}{rcl}
 A_M &\leqslant&\dfrac{\gamma_1}{M^{2m-5}},
 \end{array}
 \end{equation*}
where $\gamma_1$ does not depend on $M$. 
Using (\ref{partiel:phi0 GM+k_1}), for all $M\geqslant 2$, 
 there exists $k_1(M)\in\{1,...,7\}$, such that 
 $|\phi_{15M+k_1(M)}^{0,M}|=1$.
Thus there exists an increasing sequence $(M_j)_{j\in\mb{N}^*}$ such that   $|\phi_{15M_j+k_1}^{0,M_j}|=1$ for a $k_1\in\{1,...,m\}$ independent of $j$.
\end{proof}

\begin{theo}\label{partiel:theo contr exemple bord}
Let $T>0$ and  $\alpha$ be the function of $L^{\infty}(0,\pi)$ defined by
\begin{equation}\label{partiel:def alpha bord}
\alpha(x):=\sum\limits_{j=1}^{\infty}\dfrac{1}{j^2}\cos(15jx)\mr{~for~all~}x\in(0,\pi).
\end{equation}
Then there exists $k_1\in\{1,..,7\}$ such that for $(y_0,z_0):=(0,w_{k_1})$ and all control $v\in L^2(0,T)$, 
the solution to System (\ref{partiel:syst prim bord}) verifies $y(T)\not\equiv0$ in $(0,\pi)$.
\end{theo}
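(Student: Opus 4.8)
The plan is to argue by duality, using the identity~\eqref{partiel:egalite non cont dualite bis} together with the family of adjoint final data supplied by Lemma~\ref{partiel:lemme non cont}. Suppose, for contradiction, that for the datum $(y_0,z_0)=(0,w_{k_1})$ there is a control $v\in L^2(0,T)$ with $y(T)\equiv0$. Then by~\eqref{partiel:egalite non cont dualite bis}, for \emph{every} $\phi_0\in H^1_0(0,\pi)$ the solution $(\phi,\psi)$ of the adjoint System~\eqref{partiel:syst dual} satisfies $-\langle w_{k_1},\psi(0)\rangle_{H^{-1},H^1_0}=\int_0^T v(t)\phi_x(0,t)\,dt$; since $w_{k_1}\in L^2(0,\pi)$, the left-hand side is $-\psi_{k_1}(0)$, the $k_1$-th Fourier coefficient of $\psi(0)$ in the basis $(w_k)_{k\geqslant1}$, and Cauchy--Schwarz yields
\begin{equation*}
|\psi_{k_1}(0)|\leqslant \|v\|_{L^2(0,T)}\Bigl(\int_0^T\phi_x(0,t)^2\,dt\Bigr)^{1/2}.
\end{equation*}
It therefore suffices to exhibit $k_1\in\{1,\dots,7\}$ and a sequence of admissible final data $\phi_{0,j}$ for which the left-hand side decays strictly slower, in $j$, than the right-hand bracket: this forces $\|v\|_{L^2(0,T)}=+\infty$.

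I would plug in the data $\phi_{0,M}=\sum_{i=1}^{7}\phi^{0,M}_{15M+i}w_{15M+i}$ given by Lemma~\ref{partiel:lemme non cont} with $m=7$ and $G=15$, which come with the decay $\bigl(\int_0^T(\phi_M)_x(0,t)^2\,dt\bigr)^{1/2}\leqslant\gamma_1 M^{-9/2}$ (here $(2m-5)/2=9/2$) and with a subsequence $(M_j)_j$ along which $|\phi^{0,M_j}_{15M_j+k_1}|=1$ for a fixed $k_1\in\{1,\dots,7\}$. Since $y_0=0$, only $|\psi_{M,k_1}(0)|$ (the $k_1$-th coefficient of $\psi_M(0)$, with $(\phi_M,\psi_M)$ the adjoint solution for $\phi_0=\phi_{0,M}$) needs to be bounded from below. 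By linearity of~\eqref{partiel:syst dual} and formula~\eqref{partiel:expres psi cont}, for $\phi_0=w_k$ one has $\psi_{kl}(t)=\frac{e^{-k^2(T-t)}-e^{-l^2(T-t)}}{l^2-k^2}\alpha_{kl}$, whence
\begin{equation*}
\psi_{M,k_1}(0)=\sum_{i=1}^{7}\phi^{0,M}_{15M+i}\,\frac{e^{-(15M+i)^2T}-e^{-k_1^2T}}{k_1^2-(15M+i)^2}\,\alpha_{15M+i,\,k_1}.
\end{equation*}

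The crux is the sparsity of $(\alpha_{kl})$ on this block. Expanding $\alpha(x)=\sum_{j\geqslant1}j^{-2}\cos(15jx)$ and writing $w_{15M+i}$, $w_{k_1}$ in terms of sines, $\alpha_{15M+i,k_1}$ is a finite combination of integrals $\int_0^\pi\cos(px)\,dx$, which vanish unless $p=0$; as $1\leqslant i,k_1\leqslant7$, one has $2\leqslant i+k_1\leqslant14<15$, so $15M+i+k_1$ is never a positive multiple of $15$, while $15M+i-k_1$ is a (positive) multiple of $15$ only when $i=k_1$, the corresponding frequency then being exactly $15M$. Hence $\alpha_{15M+i,k_1}=0$ for $i\neq k_1$, and a direct evaluation of the surviving integral gives $\alpha_{15M+k_1,k_1}=\tfrac1{2M^2}$. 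Consequently
\begin{equation*}
\psi_{M,k_1}(0)=\phi^{0,M}_{15M+k_1}\,\frac{e^{-(15M+k_1)^2T}-e^{-k_1^2T}}{k_1^2-(15M+k_1)^2}\cdot\frac1{2M^2},
\end{equation*}
and since the numerator tends to $-e^{-k_1^2T}\neq0$ while $|k_1^2-(15M+k_1)^2|\leqslant CM^2$, there are $c>0$ and $M_0$ with $|\psi_{M,k_1}(0)|\geqslant c\,M^{-4}|\phi^{0,M}_{15M+k_1}|$ for all $M\geqslant M_0$.

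It remains to combine the estimates along the subsequence: for $j$ large, $|\psi_{M_j,k_1}(0)|\geqslant c\,M_j^{-4}$, whereas $\bigl(\int_0^T(\phi_{M_j})_x(0,t)^2\,dt\bigr)^{1/2}\leqslant\gamma_1 M_j^{-9/2}$. Inserting both into the Cauchy--Schwarz inequality above (with $(y_0,z_0)=(0,w_{k_1})$) gives $c\,M_j^{-4}\leqslant\gamma_1\|v\|_{L^2(0,T)}M_j^{-9/2}$, i.e. $c\,M_j^{1/2}\leqslant\gamma_1\|v\|_{L^2(0,T)}$, which is impossible as $M_j\to\infty$. Hence no $v\in L^2(0,T)$ drives $y(T)$ to $0$ from $(0,w_{k_1})$, which is the claim. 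I expect the main obstacle to be the sparsity computation of $\alpha_{15M+i,k_1}$: the frequency $15$ and the window $\{1,\dots,7\}$ are chosen precisely so that $2\cdot7<15$ kills the $i+k_1$ resonances, and so that $(2m-5)/2=9/2>4$, the latter strict inequality being exactly what makes the final comparison of powers of $M_j$ close.
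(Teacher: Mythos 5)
Your proof is correct and follows essentially the same route as the paper: duality via \eqref{partiel:egalite non cont dualite bis}, the adjoint data from Lemma \ref{partiel:lemme non cont} with $m=7$, $G=15$, the sparsity computation showing only the $i=k_1$ term of $\alpha_{15M+i,k_1}$ survives (your constant $\tfrac{1}{2M^2}$ versus the paper's $\tfrac{1}{\pi M^2}$ is an immaterial normalization difference), and the final comparison $M_j^{-4}$ versus $M_j^{-9/2}$ forcing $\|v\|_{L^2(0,T)}=\infty$.
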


\begin{proof} 
To understand why the number «15» appears in the definition (\ref{partiel:def alpha bord}) 
of the function $\alpha$, we will consider for all $x\in (0, \pi)$
\begin{equation}\label{partiel:expr alpha non cont preuve}
\alpha(x):=\sum\limits_{j=1}^{\infty}\dfrac{1}{j^2}\cos(Gjx)\mr{~for~all~}x\in(0,\pi),
\end{equation}
where $G\in \mb{N}^*$. 
We recall that for an  initial condition $(y_0,z_0)\in L^2(0,\pi)^2$ 
and a control $v\in L^2(0,T)$, the solution to  System (\ref{partiel:def alpha bord}) satisfies $y(T)\equiv0$ in $(0,\pi)$ if and only if 
for all $\phi_0\in L^2(0,\pi)$, we have the equality
  \begin{equation}\label{partiel:egalite non cont dualite} 
 - \langle y_0,\phi(0)\rangle_{H^{-1},H^1_0}
  -\langle z_0,\psi(0)\rangle_{H^{-1},H^1_0}
  =\displaystyle\int_0^Tv(t)\phi_x(0,t)~\,dt,
 \end{equation}
 where $(\phi,\psi)$ is the solution to the adjoint System (\ref{partiel:syst dual}). 
 Let us consider the sequences $(M_j)_{j\in\mb{N}^*}$ and $(\phi_{0,M_j})_{j\in \mb{N}}$, $k_1$ defined 
 in Lemma \ref{partiel:lemme non cont} and $(\phi_{M_j},\psi_{M_j})$ the solution to
 \begin{equation*}
  \left\{\begin{array}{ll}
          -\partial_t\phi_{M_j}=\Delta\phi_{M_j}&\mr{in}~(0,\pi)\times(0,T),\\
          -\partial_t\psi_{M_j}=\Delta\psi_{M_j}+\alpha\phi_{M_j}&\mr{in}~(0,\pi)\times(0,T),\\
          \phi_{M_j}(0)=\phi_{M_j}(\pi)=\psi_{M_j}(0)=\psi_{M_j}(\pi)=0&\mr{on}~(0,T),\\
          \phi_{M_j}(T)=\phi_{0,{M_j}},~\psi_{M_j}(T)=0&\mr{in}~(0,\pi).
         \end{array}
\right.
 \end{equation*}
 The goal is to prove that for the initial data $(y_0,z_0):=(0,w_{k_1})$ and $\phi_{0,{M_j}}$ for $j$ large enough, 
the equality \eqref{partiel:egalite non cont dualite} does not holds.   
 Using Lemma \ref{partiel:lemme non cont}, we have
  \begin{equation}\label{partiel:estim right non cont} 
 \left|\displaystyle\int_0^Tv(t)(\phi_{M_j})_x(0,t)~\,dt\right|
 \leqslant \dfrac{\gamma_1\|v\|_{L^2(q_T)}}{{M_j}^{(2m-5)/2}}.
 \end{equation}
 Since $y_0=0$, we obtain
 \begin{equation}\label{partiel:contradic equalite dualite non cont 1}
\langle y_0,\phi_{M_j}(0)\rangle_{H^{-1},H^1_0}=0.  
 \end{equation}
Let us now estimate the term $\langle z_0,\psi_{M_j}(0)\rangle_{H^{-1},H^1_0}$ in the equality \eqref{partiel:egalite non cont dualite}. 
We recall that the expression of $\alpha$ is given in \eqref{partiel:expr alpha non cont preuve}. 
Then, the function $\alpha$ is of the form $\alpha(x)=\sum\limits_{p=0}^{\infty}\alpha_p\cos(px)$ 
for all $x\in(0,\pi)$, with 
\begin{equation}\label{partiel:alpha_p alpha non cont}
 \alpha_p:=\left\{\begin{array}{ll}
                  \frac{1}{i^2}&~\mr{if~}p=Gi\mr{~with~}i\in\mb{N}^*,\\
                  0&~\mr{otherwise}.
                 \end{array}
                 \right.
\end{equation}
From the definition of $\alpha_{kl}$ in (\ref{partiel:def alpha kj}), 
 there holds for all $k,l\in \mb{N}^*$
  \begin{equation*}
 \begin{array}{rcl}
  \alpha_{kl}  &=&\frac{1}{\pi}(\alpha_{|k-l|}-\alpha_{k+l}).
\end{array} 
\end{equation*}
Let $k\in\{1,...,m\}$ and  $l\in\{ G{M_j}+1,...,G{M_j}+m\}$. 
We have $k+l\in\{ G{M_j}+2,...,G{M_j}+2m\}$. Thus if we choose 
\begin{equation}\label{partiel:G geqslant 2m+1}
 G\geqslant 2m+1,
\end{equation}
using (\ref{partiel:alpha_p alpha non cont}), we obtain
 \begin{equation*}
\alpha_{k+l}=0  
 \end{equation*}
 and 
\begin{equation*}
\alpha_{|k-l|}=\left\{\begin{array}{cl}
                     \dfrac{1}{{M_j}^2}&\mr{~if~}|k-l|=G{M_j},\\
                     0&\mr{~otherwise.}
                    \end{array}
 \right.
\end{equation*}
So that we have the following submatrix of $(\alpha_{kl})_{1\leqslant k,l\leqslant GM+m}$:
\begin{equation}\label{partiel:calcul submat alpha}
 (\alpha_{kl})_{1\leqslant k\leqslant m,G{M_j}+1\leqslant l\leqslant G{M_j}+m}
 =\dfrac{1}{\pi{M_j}^2}I_{\mb{R}^m}.
\end{equation}
According to Lemma \ref{partiel:lemme non cont}, 
there exists $k_1\in\{1,...,m\}$ such that 
\begin{equation}\label{partiel:phi0 GM+k_1}
|\phi^{0,M_j}_{G{M_j}+k_1}|=1.  
\end{equation}
Furthermore, since $k_1\in\{1,...,m\}$,
\begin{equation}\label{partiel:estim k_1}
 |e^{-k_1^2T}-e^{-(G{M_j}+k_1)^2T}|\geqslant |e^{-m^2T}-e^{-G^2{M_j}^2T}|
\end{equation}
and
\begin{equation}\label{partiel:estim k_1 bis}
 (G{M_j}+k_1)^2-k_1^2\leqslant (G{M_j}+m)^2-1.
\end{equation}
 Since  $z_0=w_{k_1}$, 
 the equality  (\ref{partiel:calcul submat alpha}) leads to 
\begin{equation*}\begin{array}{rcl}
  \left|\displaystyle\int_0^{\pi}z_0\psi_{M_j}(0)\,dx\right|
  &=& \left|\sum\limits_{s=1}^7
  \dfrac{e^{-k_1^2T}-e^{-(G{M_j}+s) 2T}}{-k_1^2+(G{M_j}+s)^2}
 \alpha_{k_1,G{M_j}+s}\phi_{G{M_j}+s}^{0,{M_j}}\right|\vspace*{4mm}\\
  &=& \left|\dfrac{e^{-k_1^2T}-e^{-(G{M_j}+k_1)^2T}}{-k_1^2+(G{M_j}+k_1)^2}
 \dfrac{1}{\pi{M_j}^2}\right|.
 \end{array}\end{equation*}
  Then using \eqref{partiel:estim k_1}  and \eqref{partiel:estim k_1 bis} for all $j\in\mb{N}^*$
\begin{equation}\label{partiel:contradic equalite dualite non cont 2}
 \left|\langle z_0,\psi_{M_j}(0)\rangle_{H^{-1},H^1_0}\right|
 =\left|\displaystyle\int_0^{\pi}z_0\psi_{M_j}(0)\,dx\right|
 \geqslant \dfrac{\gamma_2}{{M_j}^4},
 \end{equation}
 where $\gamma_2$ does not depend on $j$. 
 Combining (\ref{partiel:estim right non cont}) and (\ref{partiel:contradic equalite dualite non cont 2}), 
 we obtain a contradiction with equality (\ref{partiel:egalite non cont dualite}). 
 Thus, for this initial condition $y_0$ and $z_0$, 
 we can not find a control $v\in L^ 2(0,T)$ such that the  solution $(y,z)$ to  
 system  (\ref{partiel:def alpha bord}) satisfies $y(T)\equiv0$ in $(0,\pi)$.

\end{proof}

\begin{proof}[Proof of the third point in Theorem \ref{partiel:theo example contr intro}]
 Using  Theorem \ref{partiel:theo contr exemple bord}, 
for the initial data $(p_0,q_0):=(0,w_{k_1})\in L^2(0,\pi)^2$ and all control $v\in L^2(0,T)$, the solution $(p,q)\in W(0,T)^2$ (defined by transposition) to the system
 \begin{equation}\label{partiel:syst prim bord preuve int}
\begin{array}{l}
  \left\{\begin{array}{ll}
   \partial_t p=\Delta p+\alpha q&\mr{~in~}(0,\pi)\times(0,T),\\
    \partial_t q=\Delta q&\mr{~in~}(0,\pi)\times(0,T),\\
        p(\pi,\cdot)=v, ~p(0,\cdot)=q(0,\cdot)=q(\pi,\cdot)=0&\mr{~on}~(0,T),\\
	p(\cdot,0)=p_0,~q(\cdot,0)=q_0 &~\mr{in}~(0,\pi)
        \end{array}
\right.
\end{array}
\end{equation}
satisfies $p(T)\not\equiv0$ in $(0,\pi)$. 
Consider now $\overline{p}_0$, $\overline{q}_0\in L^2(0,2\pi)$ defined by
\begin{equation*}
\overline{p}_0(x)=0\mr{~~~and~~~}
 \overline{q}_0(x)=\sqrt{\dfrac{2}{\pi}}\sin(k_1x)\mr{~~~for~all~}x\in (0,2\pi).
\end{equation*}
Remark that $(\overline{p}_{0|(0,\pi)},\overline{q}_{0|(0,\pi)})=(p_0,q_0)$. 
Let $\omega\subset(0,\pi)$. 
Suppose now that the system 
 \begin{equation}\label{partiel:syst prim preuv non control}
\begin{array}{l}
\mr{For~ given}~(y_0,z_0):(0,2\pi)\rightarrow \mb{R}^2,
~u:(0,2\pi)\times(0,T)\rightarrow \mb{R},\\
\mr{Find}~(y,z):(0,2\pi)\times(0,T)\rightarrow \mb{R}^2\mr{~such~ that}\\
  \left\{\begin{array}{ll}
   \partial_t y=\Delta y+\alpha z+\mathds{1}_{\omega}u&\mr{~in~}(0,2\pi)\times(0,T),\\
    \partial_t z=\Delta z&\mr{~in~}(0,2\pi)\times(0,T),\\
        y(0,\cdot)=y(2\pi,\cdot)=z(0,\cdot)=z(2\pi,\cdot)=0&\mr{~on}~(0,T),\\
	y(\cdot,0)=y_0,~z(\cdot,0)=z_0 &~\mr{in}~(0,2\pi)
        \end{array}
\right.
\end{array}
\end{equation}
 is $\Pi_1$-null controllable, 
more particularly for the initial conditions 
$y(0)=\overline{p}_0$ and $z(0)=\overline{q}_0$ in $(0,2\pi)$, there exists a control 
$u$ in $L^2((0,2\pi)\times(0,T))$ such that the solution $(y,z)$ to System 
(\ref{partiel:syst prim preuv non control}) satisfies $y(T)\equiv0$ in $(0,2\pi)$. 
We remark now that $(p,q):=(y|_{(0,\pi)},z|_{(0,\pi)})$ is a solution of (\ref{partiel:syst prim bord preuve int}) 
with $(p(0),q(0))=(p_0,q_0)$ in $(0,\pi)$, $v(t)=y(\pi,t)$  in $(0,T)$ 
and satisfying $p(T)\equiv0$ in $(0,\pi)$. This contradicts that for any control $v\in L^2(0,T)$ 
the solution $(p,q)$ 
to System (\ref{partiel:syst prim bord preuve int}) can  not be identically equal to zero at time T.
\end{proof}

\subsection{Numerical illustration}\label{partiel:section num}

\hspace*{4mm} In this section, we illustrate numerically the results obtained previously 
in Sections \ref{partiel:example control} and \ref{partiel:section contre exemple}. 
%
%
%
%
%
We adapt the HUM method to our control problem. 
For all penalty parameter  $\varepsilon>0$, we  compute the control that minimizes 
the penalized HUM functional $F_{\varepsilon}$  given by
\begin{equation*}
F_{\varepsilon}(u):=\frac{1}{2}\|u\|_{L^2(\omega\times(0,T))}^2
+\dfrac{1}{2\varepsilon}\|y(T;y_0,u)\|_{L^2(\Omega)}^2,
\end{equation*}
where $y$ is the solution to (\ref{partiel:syst prim}). 
We can find in \cite{BoyerHumEuler} the argument relating the null/approximate controllability 
and this kind of functional. 
Using the Fenchel-Rockafellar theory 
(see \cite{ekelandtemam74}  p. 59) 
we know that the minimum of $F_\varepsilon$ is equal to the opposite 
of the minimum of $J_{\varepsilon}$, the so-called dual functional,  
defined for   all $\varphi_0\in L^2(\Omega)$ by 
\begin{equation*}\begin{array}{rcl}
J_{\varepsilon}(\varphi_0)
&:=&\frac{1}{2}\|\varphi\|_{L^2(q_T)}^2
+\frac{\varepsilon}{2}\|\varphi_0\|_{L^2(Q_T)}^2
+\langle y(T;y_0,0), \varphi_0\rangle_{L^2(\Omega)},
\end{array}\end{equation*}
where  $\varphi$ is the solution to the backward  System  (\ref{partiel:def Lambda}).
Moreover  the minimizers $u_{\varepsilon}$ 
and $\varphi_{0,\varepsilon}$ of the functionals $F_{\varepsilon}$ and $J_{\varepsilon}$ respectively, 
are related through the equality 
$ u_{\varepsilon}=\mathds{1}_{\omega}\varphi_\varepsilon$, 
where $\varphi_\varepsilon$ is the solution to the backward  System (\ref{partiel:def Lambda}) 
with the initial data 
$\varphi(T)=\varphi_{0,\varepsilon}$. 
A simple computation leads to
 \begin{equation*}
  \nabla J_{\varepsilon}(\varphi_0)=\Lambda \varphi_0+\varepsilon\varphi_0+y(T;y_0,0), 
 \end{equation*}
 with the Gramiam operator $\Lambda$ defined as follows 
\begin{equation*}\begin{array}{rccl}        
 \Lambda :&L^2(\Omega)&\mapsto &L^2(\Omega),\\
 &\varphi_0&\rightarrow &w(T),
                 \end{array}
\end{equation*}
where $w$ is the solution to the following backward and forward systems
\begin{equation}\label{partiel:def Lambda}
\left\{
\begin{array}{ll}
-\partial_t \varphi=\Delta\varphi  &~\mr{in}~Q_T,\\
\varphi = 0 &~\mr{on}~\Sigma_T,\\
\varphi (T) = \varphi_0& ~ \mr{in}~ \Omega
\end{array}\right.
\end{equation}
and 
\begin{equation}\label{partiel:def Lambda2}
\left\{ \begin{array}{ll}
\partial_t w =\Delta w +\mathds{1}_{\omega}\varphi&~ \mr{in}~ Q_T, \\ 
 w=0,&~\mr{on}~ \Sigma_T,\\ 
 w(0)=0& ~ \mr{in}~ \Omega.
 \end{array}\right.
\end{equation}
Then the minimizer $u_{\varepsilon}$  of $F_{\varepsilon}$ will be computed with the help of 
the minimizer $\varphi_{0,\varepsilon}$ of $J_{\varepsilon}$ which is the solution  to the linear problem
\begin{equation*}
 (\Lambda+\varepsilon)\varphi_{0,\varepsilon}=-y(T;y_0,0).
\end{equation*}


\begin{rem}\label{partiel:rem num}
The proof of Theorem 1.7 in \cite{BoyerHumEuler} can be adapted to prove that 
\begin{enumerate}[(i)]
 \item System (\ref{partiel:syst prim}) is $\Pi_1$-null controllable if and only if 
 $\sup\limits_{\varepsilon>0}\left(\inf\limits_{L^2(\omega\times(0,T))}F_{\varepsilon}\right)<\infty,$
 \item System (\ref{partiel:syst prim}) is $\Pi_1$-approximately controllable if and only if 
 $y_{\varepsilon}(T)\underset{\varepsilon\rightarrow0}{\longrightarrow}0$,
\end{enumerate}
where $y_{\varepsilon}$ is the solution to System (\ref{partiel:syst prim}) for the control $u_{\varepsilon}$.
\end{rem}


System (\ref{partiel:syst prim}) 
with $T=0.005$, $\Omega:=(0,2\pi)$, $\omega:=(0,\pi)$ and  
$y_0:=100\sin(x)$ has been considered. 
We take the two expressions below for the coupling coefficient $\alpha$ that 
correspond respectively to Cases (1)-(2) and (3) in Theorem \ref{partiel:theo example contr intro}:
\begin{enumerate}[(a)]
 \item $\alpha(x)=1$,
 \item  $\alpha(x)=\sum\limits_{p\geqslant0}\frac{1}{p^2}cos(15px)$.
\end{enumerate}
Systems (\ref{partiel:syst prim}) and \eqref{partiel:def Lambda}-\eqref{partiel:def Lambda2}  are discretized 
with backward Euler time-marching scheme 
(time step $\delta t=1/400$) and standard 
piecewise linear Lagrange finite elements 
on a uniform mesh of size $h$ successively equal to $2\pi/50$, $2\pi/100$, $2\pi/200$ and $2\pi/300$. 
We follow the methodology of F. Boyer (see \cite{BoyerHumEuler}) 
that introduces a penalty parameter 
$\varepsilon=\phi(h):=h^4$. 
We denote by $E_h$, $U_h$ and $L_{\delta t}^2(0,T;U_h)$ the fully-discretized spaces associated to 
$L^2(\Omega)$, $L^2(\omega)$ and $L^2(q_T)$. $F^{h,\delta t}_{\varepsilon}$ is the 
discretization of $F_{\varepsilon}$ and $(y_{\varepsilon}^{h,\delta t},z_{\varepsilon}^{h,\delta t}, 
u_{\varepsilon}^{h,\delta t})$ the solution to the corresponding fully-discrete problem of minimisation. 
For more details on  the fully-discretization of System (\ref{partiel:syst prim}) 
and  Gramiam  $\Lambda$ (used to the minimisation of $F_{\epsilon}$),  
we refer to Section 3 in \cite{BoyerHumEuler} and in \cite[p. 37]{glo95}  respectively. 
The results are depicted Figure \ref{partiel:courbe ordre constant} and \ref{partiel:courbe ordre sumcos}. 

\medskip


\begin{center}
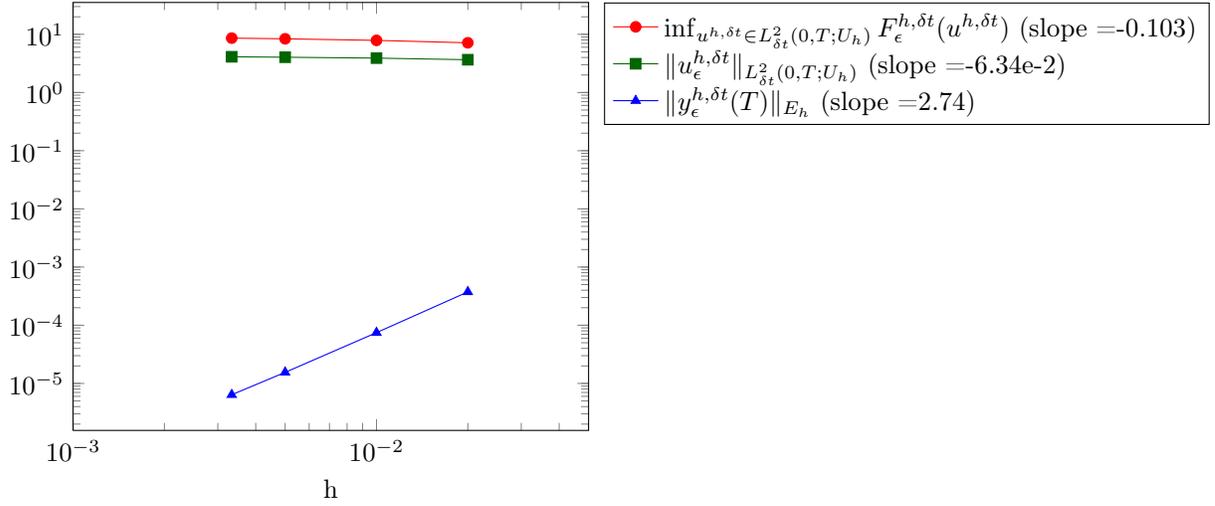
\begin{figure}[H]
\begin{tikzpicture} \begin{loglogaxis}[xlabel=h,xmin=1e-3,xmax=5e-2
,legend cell align=left,legend pos=outer north east, legend columns=1]
   \addplot[color=red,mark=*]coordinates { 
(1/50,7.15) 
(1/100,7.87)
(1/200, 8.37)
(1/300,8.60) 
 };
 \addplot[color= black!60!green,mark=square*]  coordinates { 
(1/50,3.66) 
(1/100,3.90)
(1/200, 4.04)
(1/300,4.11) 
 }; 
\addplot[color=blue,mark=triangle*] coordinates { 
(1/50,3.74e-4)
(1/100,7.46e-5)
(1/200,1.55e-5)
(1/300,6.39e-6) 
 };
 \legend{ $\inf_{u^{h,\delta t}\in  L^2_{\delta t}(0,T;U_h)} F_{\epsilon}^{h,\delta t}(u^{h,\delta t})$ (slope =-0.103), 
 $\|u_{\epsilon}^{h,\delta t}\|_{L^2_{\delta t}(0,T;U_h)}$ (slope =-6.34e-2), 
$\|y_{\epsilon}^{h,\delta t}(T)\|_{E_h} $ (slope =2.74) }
\end{loglogaxis} \end{tikzpicture}\caption{\label{partiel:courbe ordre constant}
Minimal value of the functional $\inf_{u^{h,\delta t}\in  L^2_{\delta t}(0,T;U_h)} F_{\epsilon}^{h,\delta t}(u^{h,\delta t})$, 
norm of the control $\|u_{\epsilon}^{h,\delta t}\|_{L^2_{\delta t}(0,T;U_h)}$, 
and distance to the target $\|y_{\epsilon}^{h,\delta t}(T)\|_{E_h} $  in Case (a).}
\end{figure}
\end{center}

\begin{center}
\begin{figure}[H]
\begin{tikzpicture} \begin{loglogaxis}[xlabel=h,xmin=1e-3,xmax=5e-2
,legend cell align=left,legend pos=outer north east, legend columns=1]
   \addplot[color=red,mark=*]coordinates { 
(1/50,0.01) 
(1/100,0.13)
(1/200, 1.93)
(1/300,8.92) 
 };
 \addplot[color= black!60!green,mark=square*]  coordinates { 
(1/50,0.0508) 
(1/100,0.12)
(1/200, 0.42)
(1/300,1.1) 
 }; 
\addplot[color=blue,mark=triangle*] coordinates { 
(1/50,5.3e-5)
(1/100,5.03e-5)
(1/200,4.8e-5)
(1/300,4.53e-5) 
 };
 \legend{ $\inf_{u^{h,\delta t}\in  L^2_{\delta t}(0,T;U_h)} F_{\epsilon}^{h,\delta t}(u^{h,\delta t})$ (slope =-3.80), 
 $\|u_{\epsilon}^{h,\delta t}\|_{L^2_{\delta t}(0,T;U_h)}$ (slope =-1.70), 
$\|y_{\epsilon}^{h,\delta t}(T)\|_{E_h} $ (slope =8.34e-2) }
\end{loglogaxis} \end{tikzpicture}\caption{\label{partiel:courbe ordre sumcos}
Minimal value of the functional $\inf_{u^{h,\delta t}\in  L^2_{\delta t}(0,T;U_h)} F_{\epsilon}^{h,\delta t}(u^{h,\delta t})$, 
norm of the control $\|u_{\epsilon}^{h,\delta t}\|_{L^2_{\delta t}(0,T;U_h)}$, 
and distance to the target $\|y_{\epsilon}^{h,\delta t}(T)\|_{E_h} $ in Case (b).}
\end{figure}
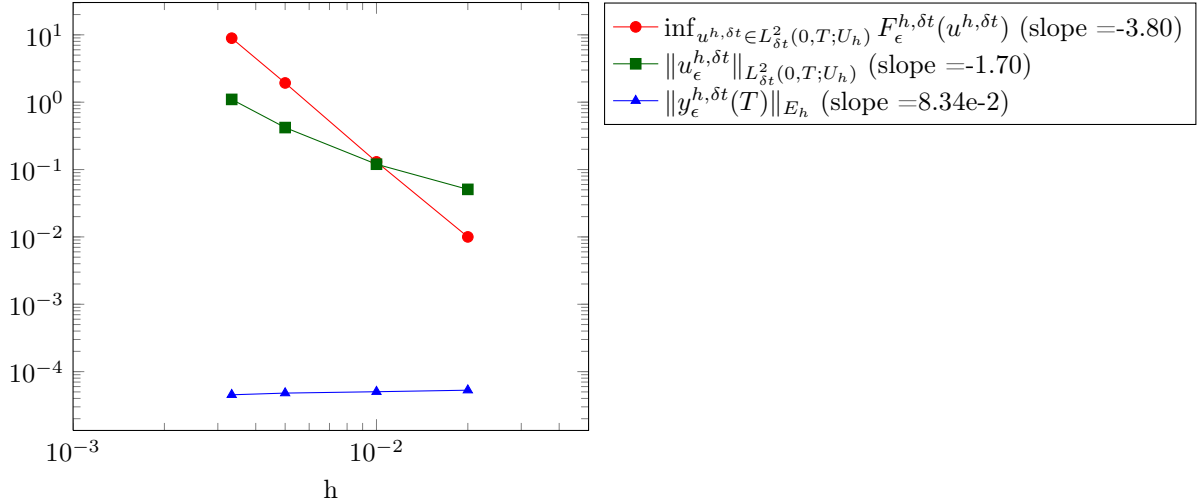
\end{center}

%



As mentioned in the introduction of the present article (see Theorem \ref{partiel:theo example contr intro}), 
in both situations (a) and (b), System (\ref{partiel:syst prim}) is 
$\Pi_1$-approximately controllable and we observe indeed in Figure \ref{partiel:courbe ordre constant} and \ref{partiel:courbe ordre sumcos} that 
the norm of the numerical solution to System (\ref{partiel:syst prim}) 
at time $T$ ($\textcolor{blue}{-\blacktriangle-}$) is decreasing 
when  reducing the penality parameter $\varepsilon=h^4$.

In Figure \ref{partiel:courbe ordre constant}, the minimal value of the 
functional $F^{h,\delta t}_{\varepsilon}$ ($\textcolor{red}{-\bullet-}$) 
as well as the $L^2$-norm of the control $u^{h,\delta t}_{\varepsilon}$ 
($\textcolor{black!60!green}{-\blacksquare-}$) 
remain roughly constant whatever is the value of $h$ (and $\varepsilon=h^4$).
This appears in agreement with the results (1)-(2) of Theorem \ref{partiel:theo example contr intro}, that state 
the $\Pi_1$-null controllability of System (\ref{partiel:syst prim}) 
in Case (a) of a constant coupling coefficient $\alpha$ (see Remark \ref{partiel:rem num} (i)). 
Furthermore 
the convergence to the null target is approximately of order $2$ (slope of $2.27$). 
This is in agreement with the convergence rate established in
\cite[Proposition 2.2]{BoyerHumEuler},
which should be $h^2$ for $\varepsilon = h^4$ (this result should be in fact slightly 
adapted to consider $\Pi_1$-null controllability).

At the opposite, in Figure \ref{partiel:courbe ordre sumcos}, the 
minimal value of the functional $F^{h,\delta t}_{\varepsilon}$ as well as the 
$L^2$-norm of the control $u^{h,\delta t}_{\varepsilon}$
 are strongly increasing whenever $h$ (and $\varepsilon$) become smaller.
This coincides with point (3) of Theorem \ref{partiel:theo example contr intro}: 
for the chosen value of the coupling coefficient $\alpha$ in Case (b), no
$\Pi_1$-null controllability of System (\ref{partiel:syst prim}) is expected.
Moreover, convergence to the null target is quite slow, with a slope 
of approximately $8.34e-2$.

\quad \textit{Acknowledgements}. The authors thank Assia Benabdallah and Franck Boyer
  for their interesting comments and suggestions. They thank as well  the two  referees for their remarks that helped to improve the paper.

\bibliographystyle{AIMS}
\bibliography{biblio.bib}

\end{document}